\documentclass[12pt]{amsart}

\usepackage[top=100pt,bottom=60pt,left=96pt,right=92pt]{geometry}

\usepackage{hyperref}
\usepackage{amsmath,amsthm,amssymb,amsfonts,enumerate,xcolor,layout,tikz,subcaption,enumitem,fullpage, verbatim}
\usepackage{faktor}
\usepackage{tikz}
\usetikzlibrary{matrix}
\usepackage{tikz-cd}
\usepackage{extarrows}

\newtheorem{theorem}{Theorem}[section]
\newtheorem{proposition}[theorem]{Proposition}
\newtheorem{corollary}[theorem]{Corollary}
\newtheorem{lemma}[theorem]{Lemma}

\theoremstyle{definition}
\newtheorem{definition}[theorem]{Definition}

\theoremstyle{plain}

\definecolor{linkblue}{rgb}{0,0,.6}
\definecolor{citered}{rgb}{.7,0,0}
\hypersetup{colorlinks =true, linkcolor=linkblue, citecolor = citered, urlcolor=linkblue}

\def\C{{\mathbb C}}
\def\N{{\mathbb N}}

\def\R{{\mathbb R}}

\def\Z{{\mathbb Z}}
\def\ft{{\mathfrak{t}}}

\def\V{{\mathcal V}}
\def\E{{\mathcal E}}
\def\Tilde{\widetilde}
\def\del{\partial}

\newcommand{\De}{\Delta}
\newcommand{\om}{\omega}

\newcommand{\GL}{\textrm{GL}}

\newcommand{\relint}{\operatorname{relint}}

\newcommand{\fh}{\mathfrak{h}}
\newcommand{\Y}{Y =  T \times_H \fh^\circ \times \C^{h+1}}

\begin{document}

\title{Tall Complexity One Spaces with k-colorable Skeleton}

\author{Yichen Liu}
\date{\today}
\address{Mathematics Department\\
	University of Illinois at Urbana-Champaign\\Champaign, Illinois 61801}
\email{yichen23@illinois.edu}

\begin{abstract}
Tall complexity one $T$-spaces are Hamiltonian $T$-spaces $(M,\om,\Phi)$ such that $\frac{1}{2}\dim M -\dim T=1$ and the symplectic quotient at each moment value is a surface. The skeleton of a complexity one $T$-space is an important invariant in the classification and encodes the information about non-generic orbits. In this paper, we study properties of the skeleton of a compact, connected tall complexity one $T$-spaces. We prove that when the skeleton is $k$-colorable, i.e., when it can be partitioned into $k$ closed and open subsets such that the orbital moment map is injective on each of them, its information can be recovered by the one-skeleton (the set of non-generic orbits whose dimension is at most one). We also prove that for any cloesd and open subset of the skeleton on which the orbital moment map is injective, one can construct a symplectic toric $(T\times S^1)$-manifold whose underlying complexity one $T$-space has the skeleton isomorphic to this subset.
\end{abstract}

\maketitle

\section{Introduction}
Let $T$ be a torus with Lie algebra $\mathfrak{t}$ and dual space $\ft^*$. Recall that we say $T$ acts on a symplectic manifold $(M,\omega)$ \textit{in a Hamiltonian fashion} if there exists an $T$-invariant map $\Phi: M \to \ft^*$ such that $d \langle \Phi, X \rangle = \omega(X^\#, \cdot)$ for all $X \in \ft$, where $X^\#$ is the fundamental vector field on $M$ corresponding to $X$. We call the tuple $(M,\om,\Phi)$ a {\bf Hamiltonian $T$-space}. If not otherwise stated, we always assume the action is effective\footnote{An action is effective if every non-identity element acts non-trivially.} in this paper. The {\bf complexity} of $(M,\om,\Phi)$ is $\frac{1}{2} \dim M - \dim T$; it is half the dimension of the reduced space $\Phi^{-1}(\alpha)/T$ at a regular value $\alpha \in \Phi(M)$. Hamiltonian $T$-spaces of complexity zero are known as symplectic toric manifolds. Such spaces are classified by their moment images (see Theorem~\ref{thm:Delzant}). 

Let $(M,\omega,\Phi)$ be a $2n$–dimensional Hamiltonian $T$–manifold. Recall that the Liouville measure on $M$ is given by integrating the volume form $\frac{\omega^n}{n!}$ with respect to the symplectic orientation and that the {\bf Duistermaat–Heckman measure} is the pushforward of the Liouville measure by the moment map. Recall that the symplectic slice at $p \in M$ is the symplectic vector space $(T_p \mathcal{O})^\omega / (T_p \mathcal{O} \cap (T_p \mathcal{O})^\omega)$, where $\mathcal{O}$ is the $T$-orbit of $p$. Let $H \subset T$ be the stabilizer group of $p$. The isotropy representation of $H$ on $T_p M$ induces a representation on the quotient space, called the {\bf slice representation.} Since the symplectic slice inherits the complex structure from $T_pM$, the slice representation decomposes into one-dimensional complex irreducible representations each of which is determined by a weight vector $\eta_i \in \mathfrak{h}^*$. Under the identification $(T_p \mathcal{O})^\omega / (T_p \mathcal{O} \cap (T_p \mathcal{O})^\omega) \cong \C^k$, we can express the slice representation as a group homomorphism $\rho: H \to (S^1)^k$ such that $h.(z_1,\ldots,z_k) = (\rho_1(h)z_1,\ldots,\rho_k(h)z_k)$, where the differential of $\rho_i$ is $\eta_i$.

Fix a complexity one $T$-space $(M,\om,\Phi)$, we say a point $p \in M$ is {\bf tall} if every open neighborhood of the orbit through $p$ in the reduced space $\Phi^{-1}(\Phi(p))/T$ contains more than one orbit. A complexity one $T$-space $(M,\om,\Phi)$ is {\bf tall} if every point in $M$ is tall. 
An orbit $\mathcal{O}_p \subset M$ is {\bf exceptional} if every
nearby orbit in the moment fiber $\Phi^{-1}(\Phi(p))$ has strictly smaller stabilizer. Correspondingly, a point in an exceptional orbit is called an exceptional point.
The {\bf skeleton} of $(M,\omega,\Phi)$ is the set of tall exceptional orbits, considered as a subspace of $M/T.$ The {\bf one-skeleton} is the set of tall exceptional orbits that are at most one dimensional. 
Given $k \in \N$, we say that the (one-)skeleton of $M$ is {\bf $\mathbf k$-colorable} if the (one-)skeleton is the disjoint union of $k$ (possibly empty) clopen\footnote{More explicitly, each subset is closed and open in the subset topology on the (one-)skeleton.} subsets
so that the orbital moment map\footnote{With the slight abuse of notation, we denote both the moment map and the orbital moment map (the map induced by the moment map on the orbit space) by $\Phi$.} $\Phi \colon M/T \to \ft^*$
restricts to an injection on each subset.
Two complexity one $T$-spaces have {\bf isomorphic (one-)skeletons} if
there exists a homeomorphism from the (one-)skeleton of one space to that of another which intertwines the moment maps and takes each exceptional orbit to an exceptional orbit with the same stabilizer and isomorphic slice representations. The $k$-colorability is preserved by isomorphisms between (one-)skeletons. Finally, the $T$-action on $M$ \textbf{extends to a toric action} if there exists a torus $\Tilde T$ with $\dim(\Tilde T) = \frac{1}{2}\dim (M)$ that acts effectively on $M$ in a Hamiltonian fashion and an inclusion $T \hookrightarrow \Tilde T$ so that the $\Tilde T$-action restricts to the given $T$-action.

Complexity one spaces were first studied in dimension four by Karshon~\cite{Kar99}. Later, in a series of papers~\cite{KT01,KT03,KT14}, Karshon and Tolman classified tall complexity one spaces in any dimensions. These spaces also demonstrate new phenomena: Tolman~\cite{Tolman98} constructed a complexity one space that admits no invariant K\"ahler structure, while all symplectic toric manifolds are K\"ahler~\cite{De88}. 

In~\cite{LPT}, the author, Palmer and Tolman prove that a complexity one $T$-space has the same combinatorial invariants (the skeleton, the Duistermaat Heckman measure) as a complexity one $T$-space obtained by restricting the toric action on a symplectic toric manifold if and only if the skeleton is $2$-colorable. In this paper, we will focus on properties of the skeleton of tall complexity one $T$-spaces. Our theorems should still hold true if we assume the moment map to be proper as a map to a convex open subset of $\ft^*$ instead of assuming the manifold to be compact. For simplicity, we will just state and prove our main results in the compact case, but prove some auxiliary lemmas in the proper case. We first notice that the definition of $k$-colorability can be reduced to the $k$-colorability on the one-skeleton.

\begin{theorem}\label{thm:k-colorability}
Let $(M,\omega, \Phi)$ be a compact connected tall complexity one $T$-space. The skeleton of $(M,\omega,\Phi)$ is $k$-colorable if and only if the one-skeleton of $(M,\omega,\Phi)$ is $k$-colorable.
\end{theorem}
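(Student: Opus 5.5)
The plan is to prove the two implications separately. The forward direction should be formal; the reverse direction needs local structure of the skeleton near exceptional orbits.

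\emph{Skeleton $\Rightarrow$ one-skeleton.} Suppose the skeleton $\mathcal S$ is the disjoint union of clopen sets $C_1,\dots,C_k$ with $\Phi$ injective on each. The one-skeleton $\mathcal S_1\subset\mathcal S$ carries the subspace topology, so each $C_i\cap \mathcal S_1$ is clopen in $\mathcal S_1$, and $\Phi$ is still injective on it. This gives a $k$-coloring of $\mathcal S_1$.

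\emph{One-skeleton $\Rightarrow$ skeleton.} The strategy is to show that each connected component of $\mathcal S$ meets $\mathcal S_1$, and that the coloring of $\mathcal S_1$ propagates consistently along components. I would proceed in three steps.

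\begin{enumerate}
\item \emph{Local structure.} Use the local normal form (the model $Y = T\times_H \fh^\circ\times\C^{h+1}$ suggested by the macro) near a tall exceptional orbit. In this model the skeleton is locally the set of orbits with stabilizer exactly $H$ in the fiber direction. Exceptional orbits with a fixed stabilizer $H$ then form a manifold, and the orbital moment map restricted to it is a local homeomorphism onto an open subset of an affine plane parallel to $\fh^\circ$. So $\mathcal S$ is locally a union of strata, each mapping injectively to a face-like region of the moment polytope.

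\item \emph{Components reach the one-skeleton.} Since $M$ is compact, $\Phi(M)$ is a convex polytope. The closure of a stratum with stabilizer $H$ is a compact set whose moment image is a compact convex polyhedral piece. Its vertices come from orbits of dimension $\le 1$, or more precisely from points where the stabilizer jumps to codimension $\le 1$ in $T$. Passing to boundary strata repeatedly, and using properness together with the fact that exceptional orbits degenerate to exceptional orbits (the tall hypothesis is used here), every connected component $K$ of $\mathcal S$ contains an orbit of dimension at most one. So $K\cap\mathcal S_1\neq\emptyset$.

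\item \emph{Coloring.} Connected components of $\mathcal S$ are clopen, since $\mathcal S$ is locally connected by the normal form. Color each component $K$ by the color of any point of $K\cap\mathcal S_1$.
\end{enumerate}

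This assignment is well defined only if all of $K\cap\mathcal S_1$ has one color, and the resulting classes must make $\Phi$ injective. The key lemma I would aim for is: \emph{$\Phi$ is injective on each connected component $K$ of $\mathcal S$, and $K\cap\mathcal S_1$ is connected.} Then the clopen coloring is constant on $K\cap\mathcal S_1$.

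Injectivity across different components of the same color would be handled as follows. Suppose $x\in K$ and $y\in K'$ satisfy $\Phi(x)=\Phi(y)$. Follow the strata through $x$ and $y$ down to their minimal faces. By convexity, the moment images of $K$ and $K'$ are unions of faces. Since $\Phi$ is locally injective with images that are polyhedra determined by their vertices, the overlap $\Phi(K)\cap\Phi(K')$ contains a vertex image coming from points of $\mathcal S_1$ in both. Those points would then violate injectivity on $\mathcal S_1$.

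The main obstacle is this last step together with step 2: showing that moment images of skeleton components are determined by their one-skeleton part, in the sense that two components with overlapping images already overlap on one-dimensional orbits. I would first attempt it by induction on the dimension of the orbit type. The plan is to restrict to fixed sets $M^{H}$ of subtori, which are again complexity-one (or toric) spaces, and apply the result there.
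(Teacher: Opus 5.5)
Your forward direction is fine. Your claim that every component of the skeleton meets the one-skeleton is sound in substance. So is coloring each component through a connected set $K\cap\mathcal S_1$. The paper does the same: it uses that the component of $M^H$ through an exceptional point is a compact toric $T/H$-manifold consisting of exceptional points, and it proves $K\cap\mathcal S_1$ connected by rerouting paths along edges of the Delzant polytopes $\Phi(N_i)$.

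\textbf{The gap is the injectivity step, which is the real content of the theorem.}

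Your ``key lemma'' asserts unconditionally that $\Phi$ is injective on each component $K$. You give no mechanism for this, and the paper neither claims nor needs it. What is needed is that injectivity on $K\cap\mathcal S_1$ (which your hypothesis supplies) propagates to $K$, and that is the same problem as your cross-component case.

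For the cross-component case you argue that the images are polyhedra ``determined by their vertices'', so $\Phi(K)\cap\Phi(K')$ contains a point that is the image of one-skeleton orbits of both. This is false as polyhedral geometry. A polygon can lie in the interior of another. Or, in $\ft^*\cong\R^3$, a segment of one-dimensional orbits can pass through the interior of a $3$-dimensional piece. In both cases no point of the overlap lies on both one-skeleta. What excludes such configurations is a property specific to complexity one local models.

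Your proposed induction over fixed sets $M^H$ cannot supply that property. The components of $M^H$ containing exceptional orbits are toric, where there is nothing to prove. Two overlapping orbits from different pieces typically have stabilizers sharing no nontrivial subtorus, so restricting to a common $M^H$ does not reduce the problem.

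\textbf{The missing idea is the descent argument of Proposition~\ref{prop:injectivity}.} It treats an arbitrary clopen $E$ at once, for example a whole color class (the smallest clopen set containing $\Gamma_i$). So within-component and across-component overlaps need not be separated.
\begin{enumerate}
\item The non-injectivity locus $N=\{\alpha : |\Phi^{-1}(\alpha)\cap E|\ge 2\}$ is closed by upper semicontinuity (Lemma~\ref{lem:uppersemi}). So a generic linear functional $\langle\cdot,\xi\rangle$ attains its minimum on $N$.
\item Suppose that over some $\alpha\in N$ there are distinct $\mathcal O_p,\mathcal O_q\in E$ with $\dim\mathcal O_p\ge 2$. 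Then one produces points of $N$ strictly below $\alpha$.
\begin{itemize}
\item If $\fh_p^\circ\cap\fh_q^\circ\neq 0$, one moves along this common direction.
\item Otherwise, pick an affine line $\mathcal L\subset(\alpha+\fh_p^\circ)\cap\{\langle\cdot,\xi\rangle=\langle\alpha,\xi\rangle-\epsilon\}$, which is transverse to $\fh_q^\circ$. Lemma~\ref{lem:affine-linelocalmodel} says that every such line in the moment image of a tall complexity one local model with exceptional central orbit meets the image of an exceptional orbit; this comes from the defining monomial. It gives a common point of both local skeleta.
\end{itemize}
The conical structure of the local skeleta then gives a segment of overlap starting at $\alpha$.
\item Hence at the minimum every orbit of $E$ in the fiber has dimension $\le 1$. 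This contradicts injectivity on $\Gamma_i$.
\end{enumerate}
Without this lemma, or a substitute for it, your plan does not establish the reverse implication.
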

Moreover, we apply similar techniques as in~\cite{LPT} to prove that as long as the orbital moment map is injective on a clopen subset $E$ of the skeleton of a tall complexity one $T$-space, we can construct a symplectic toric $(T\times S^1)$-manifold whose underlying complexity one $T$-space has the skeleton isomorphic to $E$. Charton-Kessler-Tolman~\cite[Question 1.5]{CKT} asked whether trivial paintings (an invariant of a tall complexity one $T$-space) imply the existence of an invariant K\"ahler structure. Our theorem below will be useful in answering their question by providing an invariant K\"ahler structure compatible with $E$. 

\begin{theorem} \label{thm:toric}
Let $(M,\omega,\Phi)$ be a compact connected tall complexity one $T$-space. Let $E$ be a nonempty clopen (closed and open) subset of the skeleton so that the orbital moment map $\Phi \colon E \to \ft^*$ is injective on $E$. Then there exists a connected tall complexity one $T$-space $(M',\om',\Phi')$ with proper moment map such that the following holds:
\begin{enumerate}
\item $\Phi(M) = \Phi'(M')$,
\item the skeleton of $M'$ is isomorphic to $E$, and
\item the $T$-action on $M'$ extends to a toric action.
\end{enumerate}
\end{theorem}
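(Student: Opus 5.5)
Since $\Phi$ is injective on $E$, I will describe $E$ as a graph of data over $\Phi(E)\subset\Phi(M)=:\Delta$ and build a toric $(T\times S^1)$-manifold whose moment polytope is the region in $\ft^*\times\R$ lying above the graph of a suitable piecewise-linear function $f\colon\Delta\to\R$. Concretely, take $\Delta'=\{(x,t): x\in\Delta,\ t> f(x)\}$, or a comparable open or unbounded set, so that the induced $T$-moment map $(x,t)\mapsto x$ is proper onto $\Delta$ and not onto a compact image. Then item (1) holds automatically. Every point of the result is tall, since the $t$-direction fibres are intervals, so the space is a tall complexity one $T$-space. The $T$-action extends to the $(T\times S^1)$-action, which gives item (3). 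This is the approach of \cite{LPT}, adapted to a single colour.

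The first step is local. For each exceptional orbit in $E$ with stabilizer $H$ and slice representation $\rho$, the local normal form $Y = T\times_H \fh^\circ\times\C^{h+1}$ and the classification results for tall complexity one spaces (Karshon--Tolman) show the following. A neighbourhood of the orbit is equivariantly isomorphic to a neighbourhood in a toric $(T\times S^1)$-model whose polytope near $(\Phi(p),t_0)$ is a cone. This cone is the lower boundary of a region $\{t\ge \ell_p(x)\}$ cut out by facets whose normals are the lifts $(\eta_i,\pm1)$-type vectors determined by $\rho$. I would prove that this cone is forced up to $\mathrm{GL}$-shear $(x,t)\mapsto(x,t+\langle\xi,x\rangle)$. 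The key input is that an exceptional tall orbit sits at a point where the reduced space is a surface, and the slice representation gives exactly one extra weight direction.

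The second step is global, and I expect it to be the main obstacle. The local cones must be patched along $\Phi(E)$ into a single function $f$ on $\Delta$ that is convex or concave as needed. Away from $\Phi(E)$, the graph of $f$ must have no exceptional structure, meaning $f$ is smooth there, or its non-smooth facets must produce only non-exceptional orbits. Here I use that $E$ is clopen in the skeleton and $\Phi|_E$ is injective. Because $E$ is clopen, $\Phi(E)$ is a union of faces or strata of $\Delta$ along which the isotropy data is locally constant, by the local normal form. Because $\Phi|_E$ is injective, there is at most one exceptional orbit over each point, so the data define a single-valued function. I would define the facets of $\Delta'$ as the closures of the lifts of the codimension-one pieces of $E$, using the weights from $E$. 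Compatibility at lower strata follows from the local models of the first step, since the local models are built from the slice representation, which restricts correctly to nearby orbits. The shear ambiguity can be fixed consistently because $\Delta$ is convex, hence simply connected. Finally, I would verify the Delzant condition, in its smooth non-compact version, from the local models, and apply Delzant's theorem (Theorem~\ref{thm:Delzant}) in its proper or noncompact form to obtain $(M',\om',\Phi')$.

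For item (2), the exceptional orbits of the restricted $T$-action on $M'$ lie exactly over the non-smooth locus of the lower boundary of $\Delta'$. By construction this locus corresponds to $\Phi(E)$ with matching stabilizers and slice representations, and the map $E\to\mathrm{Skel}(M')$ is a homeomorphism intertwining the moment maps. If patching a single concave function $f$ fails globally, my fallback is a cut-and-glue argument. I would construct the toric pieces over a cover of $\Delta$ and glue them with $T\times S^1$-equivariant symplectomorphisms. The obstruction lies in $H^1$ or $H^2$ of a convex set with coefficients in a sheaf, and it vanishes.
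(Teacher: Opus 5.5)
Your overall route (extend each local model to a toric $(T\times S^1)$-model whose image is an epigraph, show such extensions are unique up to integral affine shears, patch because $\Delta$ is contractible, apply Karshon--Lerman) is the route of Theorem~\ref{thm:inj-toric-alt}. But your final step does not produce what the statement asks for. Since $M$ is compact, $\Delta=\Phi(M)$ is compact. A moment map $\Phi'$ with $\Phi'(M')=\Delta$ that is proper as a map to a convex open set containing $\Delta$ therefore forces $M'=(\Phi')^{-1}(\Delta)$ to be compact, so ``proper onto $\Delta$ and not onto a compact image'' is impossible. For the epigraph $\{t\ge f(x)\}$, the $T$-moment fibres are preimages of half-lines, hence noncompact, so $\pi\circ\widetilde\Phi$ is not proper. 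For $\{t>f(x)\}$ it is worse: the bottom is deleted, and by Lemma~\ref{lem:equal-orbits-slice-rep} every exceptional orbit lies on the bottom or top, so the skeleton would be empty. The missing step is to cap the polytope. Choose $c>\max_\Delta f$ and set $\widetilde\Delta'=\{(\alpha,x):\alpha\in\Delta,\ c\ge x\ge f(\alpha)\}$. You must then check three things: that $\widetilde\Delta'$ is a Delzant polytope (near the top it coincides with $\Delta\times[c-\epsilon,c]$, which is Delzant because $\Delta$ is, by Lemma~\ref{lem:cx-1-image-Delzant}); that the flat top creates no exceptional orbits (Lemma~\ref{lem:flat-top-nonexception}); and, via Karshon--Lerman uniqueness over the convex set $\widetilde\Delta'\setminus\partial_+\widetilde\Delta'$, that the rest of $M'$ is the epigraph manifold, so the skeleton is still $E$.

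Your verification of item (2) rests on a false claim: exceptional orbits do not lie only over the non-smooth locus of the lower boundary. Take a bottom facet over $\relint\Delta$ with primitive normal $(a,-k)$, $a\in\ell$, $k\ge2$. It is flat, yet its points have $T$-stabilizer $\Z/k$ while the orbits directly above are free, so they are exceptional. For instance, with $T=S^1$, the Delzant polygon with vertices $(0,0),(1,0),(3,1),(4,2),(4,h),(0,h)$ gives a tall complexity one space whose skeleton contains a $\Z_2$-sphere over $[1,3]$, not just the two crease points. So you cannot build $f$ by placing creases along $\Phi(E)$ and read the skeleton off. Also, bottom facets are not lifts of codimension-one pieces of $\Phi(E)$; over the interior of $\Delta$ those pieces lift to codimension-two faces where bottom facets meet. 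The skeleton has to be matched locally. Over a small convex $U\ni\beta$, injectivity and closedness of $\Phi|_E$ place $E\cap\Phi^{-1}(U)$ inside a single local-model chart (Lemma~\ref{lem:exceptional-saturated-new}). The toric extension of that model from Lemma~\ref{lem:raystrong} then has skeleton $E\cap\Phi^{-1}(U)$ by construction. The local uniqueness you only assert comes from Lemma~\ref{lem:equal-orbits-slice-rep}(1) and Lemma~\ref{lem:slice-rep}. At corresponding bottom points, two extensions have the same $T$-stabilizer and slice representation. Each acts on $Y$ through an isomorphism $\Theta_i$ with $\widetilde P\circ\Theta_i(t,\lambda)=\lambda$, and $\Theta_1,\Theta_2$ differ by a homomorphism $S^1\to T$, i.e.\ by some $A\in\ell$. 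This gives $f_2-f_1=\langle\cdot,A\rangle+B$ (Proposition~\ref{prop:local-uniqueness}). You need this at every point of $\Delta$, not just over $\Phi(E)$. The resulting cocycle takes values in $\ell\times\R$, not only in the linear shears you wrote down.
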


As a consequence of Theorem~\ref{thm:toric}, we prove that for $k$-colorable tall complexity one $T$-spaces, the one-skeleton decides the skeleton.

\begin{theorem}\label{thm:1-skeleton}
Let $(M_1,\omega_1,\Phi_1)$ and $(M_2,\omega_2,\Phi_2)$ be two compact connected tall complexity one $T$-spaces that both have $k$-colorable skeletons. If $M_1$ and $M_2$ have isomorphic one-skeletons, then they have isomorphic skeletons.
\end{theorem}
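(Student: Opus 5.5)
The strategy is to reduce to toric models, one color at a time, and use the fact that in a symplectic toric manifold the whole skeleton is determined by its one-skeleton via the moment polytope.

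First fix the colorings. Let $S_1, S_2$ be the skeletons and $S_1^{(1)}, S_2^{(1)}$ the one-skeletons, and let $f\colon S_1^{(1)}\to S_2^{(1)}$ be the given isomorphism. Choose a $k$-coloring $S_1=E_1\sqcup\cdots\sqcup E_k$ into clopen sets on which $\Phi_1$ is injective. Intersecting with the one-skeleton gives a $k$-coloring $E_i\cap S_1^{(1)}$ of $S_1^{(1)}$. Transporting it by $f$ gives a $k$-coloring $F_i^{(1)}=f(E_i\cap S_1^{(1)})$ of $S_2^{(1)}$.

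The key step is to show that this coloring of $S_2^{(1)}$ is induced from a coloring of all of $S_2$. In other words, we need clopen $F_i\subset S_2$ with $\Phi_2$ injective on $F_i$ and $F_i\cap S_2^{(1)}=F_i^{(1)}$. I expect this to be the main obstacle, and I would handle it with the ideas behind Theorem~\ref{thm:k-colorability}. Every connected component $C$ of the skeleton contains a point of the one-skeleton: its closure meets a lower-dimensional stratum, and compactness together with the local normal form push down to orbits of dimension at most one. Since the $E_i$ are clopen, a component carries a single color. So I would color each component of $S_2$ by the color of the one-skeleton points it contains. Well-definedness and injectivity should come from the local model: two components with the same color and overlapping moment images would force, near a one-skeleton point, two one-skeleton orbits with the same moment value. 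Those orbits would then lie in different colors of $S_2^{(1)}$, and pulling back by $f$ gives the same contradiction in $S_1^{(1)}$.

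Next apply Theorem~\ref{thm:toric} to each nonempty color. Each $E_i$ yields a tall complexity one space $M_1^i$ whose $T$-action extends to a toric $(T\times S^1)$-action, with $\Phi(M_1^i)=\Phi_1(M_1)$ and skeleton isomorphic to $E_i$. Similarly each $F_i$ yields $M_2^i$ with skeleton isomorphic to $F_i$. Since $\Phi_1(M_1)$ and $\Phi_2(M_2)$ are both the convex hull of the moment images of the one-skeletons, and $f$ intertwines moment maps, the two images coincide. The one-skeletons of $M_1^i$ and $M_2^i$ are $E_i\cap S_1^{(1)}$ and $F_i^{(1)}$, which are isomorphic via $f$.

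It remains to show that for toric-extendable spaces with the same moment image, isomorphic one-skeletons imply isomorphic skeletons. In the toric picture the skeleton is read off from the faces of the Delzant polytope (Theorem~\ref{thm:Delzant}) projected along $\ft^*\times\R\to\ft^*$. The exceptional tall orbits are the faces whose projection has full dimension in its image stratum, and their stabilizers and slice representations are determined by the normals of the facets containing them. A face is determined by its vertices and edges, which correspond to one-skeleton data in $T$ together with the weights there. So the isomorphism of one-skeletons extends to an isomorphism $E_i\cong F_i$. This step may require the toric polytopes to be chosen compatibly, which is possible because their $T$-projections, the stabilizers and the weights all agree.

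Finally, take the disjoint union over $i$ of the isomorphisms $E_i\cong F_i$. Each is a homeomorphism between clopen pieces intertwining moment maps, so the union is an isomorphism $S_1\cong S_2$.
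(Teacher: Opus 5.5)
Your reduction to one color at a time (transport the coloring along the one-skeleton isomorphism, extend it to the full skeletons, apply the toric construction to each color, and reassemble the clopen pieces) is sound. It spells out what the paper compresses into ``it suffices to assume $1$-colorable''. The extension step is really the proof of Theorem~\ref{thm:k-colorability}. Well-definedness needs Lemma~\ref{lem:connectedness}, which says $C\cap\Gamma$ is connected for every component $C$ of the skeleton. Injectivity is the global minimization argument of Proposition~\ref{prop:injectivity}, not a local-model argument.

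The genuine gap is the last step, which is where all of Section~\ref{sec:graph} is spent. First, it rests on a false premise. The moment images need not agree, and $\Phi(M)$ is never the convex hull of the image of the one-skeleton: the fiber over a vertex of $\Phi(M)$ consists entirely of $T$-fixed points, none of them exceptional. For example, take $S^2\times S^2$ with $S^1$ rotating one factor, for two different areas: both have empty skeletons but different moment images. Second, ``choosing the toric polytopes compatibly'' has no mechanism behind it. Theorem~\ref{thm:toric} is a bare existence result, and the relevant uniqueness statement, Proposition~\ref{prop:local-uniqueness}, already assumes the skeletons are isomorphic, which is what you are trying to prove.

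What is missing is a rigidity statement. The one-skeleton only records $T$-data: the projections $\pi(v)$ of the vertices $v\in\V(\De,f)$ of $\Tilde\De=\{(\alpha,x)\in\De\times\R: x\ge f(\alpha)\}$, and the projected weights $\pi(W_v)$. You must show these determine the lifted data $v\in\ft^*\times\R$ and $W_v\subset\ell^*\oplus\Z$ up to a single shear $(\alpha,x)\mapsto(\alpha,x+\langle A,\alpha\rangle+c)$ with $A\in\ell$. This is Lemma~\ref{lem:isom-Delzant}, and it works as follows.
\begin{enumerate}
\item Fix a lift at one vertex.
\item The projected weights at a tall fixed point satisfy a unique linear relation up to scale, with nonnegative coefficients (Lemma~\ref{lem:defining-monomial}). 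This forces the lifted direction and the length of each edge from its projection.
\item Lemma~\ref{lem:mod-equal} then forces the weights at the next vertex, and one propagates along edges.
\end{enumerate}

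You then need a correct criterion that reads off the exceptional bottom faces and their labels from this lifted graph (Definition~\ref{def:de-skeleton}, Lemma~\ref{lem:nonexceptional}, Proposition~\ref{prop:skeleton}). It involves primitivity of $\pi(L_F)$ and whether some transverse weight projects into $\pi(L_F)$. The criterion you state cannot be right. Every bottom facet of $\Tilde\De$ projects onto a full-dimensional region of $\De$. Yet a facet with primitive inward normal $(n',1)$ consists of free $T$-orbits, whereas one with normal $(n',k)$, $k\ge 2$, consists of exceptional orbits with stabilizer $\Z_k$.

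Finally, vertices of $\Tilde\De$ over $\V(\De)$ are not in the one-skeleton. So ``a face is determined by its vertices and edges'' only helps once you know that exceptional faces have all their vertices in $\V(\De,f)$, which follows from Corollary~\ref{cor:exceptional-component}.
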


The structure of the paper is as follows. In Section~\ref{sec:background}, we review some properties of complexity one spaces. In Section~\ref{sec:k-colorability}, we prove Theorem~\ref{thm:k-colorability}. In Section~\ref{sec:cx-1-toric}, we focus on a special family of complexity one spaces: the ones obtained by restricting the toric action on a symplectic toric $(T \times S^1)$-manifold to the action of the subtorus $T \times \{1\}$. This section prepares us for the proof of Theorem~\ref{thm:toric} in Section~\ref{sec:extension}. Finally, in Section~\ref{sec:graph}, we build a graph to encode the information of the one-skeleton of a tall complexity one $T$-space with $k$-colorable skeleton and prove Theorem~\ref{thm:1-skeleton}.

{\bf Acknowledgements.} The author would like to thank Susan Tolman for her patient guidance and invaluable suggestions. The author also wants to thank Joseph Palmer, Yael Karshon for many helpful discussions.

\section{Background}\label{sec:background}
In this section, we recall the local normal form theorem and use it to prove a few facts about exceptional orbits in complexity one spaces.

Let $T = (S^1)^k$ be a torus, $\mathfrak{t} \cong \R^k$ be its Lie algebra and $\mathfrak{t}^* \cong \R^k$ be the dual of the Lie algebra. Let $H \subset T$ be a closed subgroup and $\mathfrak{h} \subset \mathfrak{t}$ its Lie algebra and $\mathfrak{h}^\circ \subset \mathfrak{t}^*$ the annihilator of $\mathfrak{h}$ in $\mathfrak{t}^*$. Let $H$ act on $\C^n$ by the composition of a group homomorphism $\rho= (\rho_1,...,\rho_n) \colon H \to (S^1)^n$ followed by component-wise multiplication. In coordinates, 
\[h.(z_1,...,z_n) = (\rho_1(h)z_1,...,\rho_n(h)z_n).\]
This action has a moment map given by 
$\Phi_H (z_1,...,z_n) = \frac{1}{2} \sum_{j=1}^n |z_j|^2 \eta_j$,
where the $\eta_j$, called the \emph{weights} of the action, are the differentials of $\rho_j$, regarded as elements in $\mathfrak{h}^*$. Once and for all, we fix an inner product on $\mathfrak{t}$ and identify $\mathfrak{h}^*$ as a subspace of $\mathfrak{t}^*$.

Consider the space
$Y = T \times_H \fh^\circ \times \C^n$
with $T$ acting on the left. The action of $T$ is Hamiltonian,
and there is a moment map
$\Phi_Y([t,\nu,z]) = \Phi_H(z) + \nu.$
We will call this the \emph{local model} because of the following local normal form by Guillemin-Sternberg \cite{GS84}.

\begin{theorem}[Local normal form~\cite{GS84}]\label{thm:lnf}
Let $(M,\omega,\Phi)$ be a Hamiltonian $T$-space where $T$ acts effectively. Given a point $p \in M$ with stabilizer group $H \subset T$ and slice representation $\rho$, there exists a neighborhood of the orbit $T \cdot p$ that is equivariantly symplectomorphic to a neighborhood of the orbit $\{[t,0,0]\}$ in the model $Y:= T \times_H \fh^\circ \times \C^n$.
\end{theorem}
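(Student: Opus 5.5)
The plan is to follow the standard Marle--Guillemin--Sternberg argument. First we identify the linearized data of $M$ along the orbit $\mathcal{O}=T\cdot p$ with that of $Y$ along its zero orbit $\{[t,0,0]\}$. Then we build an equivariant diffeomorphism whose derivative along the orbit is symplectic. Finally we correct it by an equivariant Moser argument.

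\emph{Linear algebra at $p$.} Because $T$ is abelian and $\Phi$ is $T$-invariant, for $X,Y\in\ft$ we have $\omega(X^\#,Y^\#)=d\langle\Phi,X\rangle(Y^\#)=0$. Hence $T_p\mathcal{O}\cong\ft/\fh$ is isotropic. Choose an $H$-invariant compatible complex structure on $T_pM$ by averaging over the compact group $H$. This gives an $H$-equivariant symplectic splitting
\[T_pM\cong (T_p\mathcal{O}\oplus J T_p\mathcal{O})\oplus V,\qquad V\cong (T_p\mathcal{O})^\omega/T_p\mathcal{O}.\]
Here $V$ is the symplectic slice with representation $\rho$. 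Since $H\subset T$ and $T$ is abelian, $H$ acts trivially on $T_p\mathcal{O}$, and therefore also trivially on $JT_p\mathcal{O}\cong(\ft/\fh)^*\cong\fh^\circ$. At $[1,0,0]\in Y$ the tangent space is $\ft/\fh\oplus\fh^\circ\oplus\C^n$ with the same $H$-action. The pairing between $\ft/\fh$ and $\fh^\circ$ is the canonical one, and $\C^n$ carries the standard form. Thus there is an $H$-equivariant linear symplectomorphism $L\colon T_{[1,0,0]}Y\to T_pM$ that sends the orbit direction to the orbit direction by the identity on $\ft/\fh$.

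\emph{Equivariant diffeomorphism.} Fix a $T$-invariant Riemannian metric on $M$. Let $S\subset T_pM$ be the $H$-invariant subspace $L(\fh^\circ\oplus\C^n)$, which is complementary to $T_p\mathcal{O}$. Define
\[\psi\colon T\times_H S\to M,\qquad \psi([t,v])=t\cdot\exp_p(v).\]
This map is well defined because $\exp_p$ is $H$-equivariant, and it is $T$-equivariant. Its derivative along the zero section is bijective, so $\psi$ is a local diffeomorphism there. It is injective on a neighborhood of the compact zero section, by the usual tubular neighborhood argument using that the stabilizer of $p$ is exactly $H$. Composing with the identification $S\cong\fh^\circ\times\C^n$ via $L$, we obtain a $T$-equivariant diffeomorphism $\psi$ from a neighborhood of the zero orbit in $Y$ onto a neighborhood of $\mathcal{O}$. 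At $[1,0,0]$ its derivative is $L$. By $T$-equivariance and $T$-invariance of both forms, $\omega_1:=\psi^*\omega$ and the model form $\omega_0$ then agree at every point of the zero orbit.

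\emph{Equivariant Moser step.} Set $\omega_s=\omega_0+s(\omega_1-\omega_0)$. Since $\omega_1-\omega_0$ vanishes on the compact zero orbit, $\omega_s$ is nondegenerate for all $s\in[0,1]$ on a smaller invariant neighborhood. The relative Poincar\'e lemma, using the homotopy operator for fiberwise radial retraction to the zero section, gives a $1$-form $\beta$ with $d\beta=\omega_1-\omega_0$ and $\beta=0$ along the orbit. Averaging $\beta$ over $T$ makes it invariant without spoiling these properties, because the retraction commutes with the $T$-action. Define $X_s$ by $\iota_{X_s}\omega_s=-\beta$. Then $X_s$ is $T$-invariant and vanishes on the orbit, so its time-dependent flow exists up to time $1$ near the orbit and is equivariant. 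The time-one map $\phi$ satisfies $\phi^*\omega_1=\omega_0$, and $\psi\circ\phi$ is the required equivariant symplectomorphism. As a by-product, the two moment maps differ by a locally constant map, since the action is connected and abelian; this constant can be absorbed by translation.

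I expect the main obstacle to be the linear step: producing the $H$-equivariant symplectic identification that is compatible with the orbit directions. The key point there is that for a torus the orbit is isotropic and $H$ acts trivially on $\ft/\fh\oplus\fh^\circ$. After that, the remaining issue is making sure the primitive $\beta$ can be chosen invariant and vanishing on the orbit; averaging handles this.
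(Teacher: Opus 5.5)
Your proposal is correct and is the standard Marle--Guillemin--Sternberg argument. The paper gives no proof of its own but quotes the result from \cite{GS84}, where it follows from the equivariant isotropic embedding theorem, i.e.\ the same steps you give: match the $H$-equivariant symplectic normal data along the isotropic orbit, then apply an equivariant relative Moser argument. The one point to phrase more carefully is the linear step: average an inner product over $H$ and take the polar decomposition to get an $H$-invariant compatible $J$, since complex structures themselves cannot be averaged.
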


The next lemma establishes that the moment image of the local model agrees locally with the moment polytope. It is a special case of \cite[Theorem 6.5]{Sjamaar}; see also \cite[Theorems 1.2, 4.3, 6.1 and 6.2]{LMTW}. 

\begin{lemma}\label{lem:local-cone}
Let  $(M,\omega,\Phi)$ be a connected Hamiltonian $T$-space so that $\Phi$ is proper as a map to a convex open subset of $\ft^*$.
Given $p \in M$,  let $Y_p$ be the local model associated to $p$ with moment map $\Phi_p$, normalized so that $\Phi_p([1,0,0]) = \Phi(p)$.
Then there exists an open neighborhood $W$ of $\Phi(p)$ such that
$\Phi(M) \cap W = \Phi_p(Y_p) \cap W$.
\end{lemma}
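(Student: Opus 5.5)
The plan is to derive the statement from Sjamaar's local convexity theorem together with the local normal form (Theorem~\ref{thm:lnf}). First I reduce to the case where $\Phi(p)$ lies in the interior of the target convex open set $U$, which holds automatically. Properness of $\Phi\colon M\to U$ and connectedness of $M$ give that $\Phi(M)$ is a convex set, and that it is locally polyhedral in $U$: this is the Lerman--Meinrenken--Tolman--Woodward convexity theorem for proper moment maps to convex open sets. Sjamaar's \emph{local-global principle} adds a sharper statement. There is a neighborhood $W$ of $\Phi(p)$ such that
\[
\Phi(M)\cap W \;=\; \bigl(\Phi(p)+C_p\bigr)\cap W,
\]
where $C_p$ is the cone spanned by the images near $p$. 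Here ``near $p$'' means that for small enough invariant neighborhoods $V$ of $T\cdot p$, the set $\Phi(V)$ equals $(\Phi(p)+C_p)\cap W'$ for some open $W'$.

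Next I compute the image of the local model. By Theorem~\ref{thm:lnf}, a neighborhood of $T\cdot p$ is equivariantly symplectomorphic to a neighborhood of $[1,0,0]$ in $Y_p$, and under this identification the moment maps differ by the constant $\Phi(p)$. Now
\[
\Phi_p(Y_p)=\Phi(p)+\fh^\circ+\Bigl\{\tfrac12\sum_j |z_j|^2\eta_j\Bigr\},
\]
which is the translated polyhedral cone $\Phi(p)+\fh^\circ+\R_{\ge0}\langle\eta_1,\dots,\eta_n\rangle$. Since this set is a cone, it is determined by its intersection with any neighborhood of its apex. Moreover, the image under $\Phi_p$ of any neighborhood of the zero-section orbit contains a neighborhood of $\Phi(p)$ in this cone, because the map $(\nu,z)\mapsto \nu+\Phi_H(z)$ is open onto the cone near $0$. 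It follows that $\Phi(V)$ and $\Phi_p(Y_p)$ agree near $\Phi(p)$, so $C_p$ is exactly this cone.

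The main obstacle is the inclusion $\Phi(M)\cap W\subset \Phi_p(Y_p)$. Points far from $p$ in $M$ could a priori map close to $\Phi(p)$ and land outside the local cone. This is exactly where properness and connectedness enter, through the local-global principle. The argument runs as follows. Properness gives a neighborhood $W$ such that $\Phi^{-1}(\overline W)$ is compact. Connectedness of the fibers, which follows from the connectedness of $M$ and properness by the Lerman et al.\ theorem, implies that $\Phi^{-1}(\Phi(p))$ is connected. Every point $q$ of this fiber then has a local cone $C_q$, and these cones agree along the connected fiber because the local cones are locally constant along it. A compactness argument then shows that all of $\Phi^{-1}(W)$ lies in a union of local-model neighborhoods whose images lie in $\Phi(p)+C_p$, after shrinking $W$. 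Rather than reproving these results, I would cite \cite[Theorem 6.5]{Sjamaar} and \cite[Theorems 1.2, 4.3, 6.1, 6.2]{LMTW} for this step, and then combine it with the computation of $\Phi_p(Y_p)$ above to conclude.
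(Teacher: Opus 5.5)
Your proposal is correct and takes essentially the paper's route: the paper gives no independent argument and simply records the lemma as a special case of \cite[Theorem 6.5]{Sjamaar} (see also \cite[Theorems 1.2, 4.3, 6.1 and 6.2]{LMTW}), which are exactly the results you cite for the hard inclusion. Your explicit identification of $\Phi_p(Y_p)$ as the cone $\Phi(p)+\fh^\circ+\R_{\ge 0}\langle\eta_1,\dots,\eta_n\rangle$, and your sketch of the hard inclusion (cones constant along the connected fiber, then compactness), are a sound elaboration of what those citations provide.
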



We call a model $Y$ a \textit{complexity one local model} if $(Y,\omega,\Phi_Y)$ is a complexity one space. In this case, $\Y$, where $h =\dim H$. The next lemma follows immediately from \cite[Lemmas 5.4 and 5.8]{KT01} and it plays an essential role in this section.

\begin{lemma}\label{lem:defining-monomial}
    Let $\Y$ be a tall complexity one local model with moment map $\Phi_Y([t,\nu,z]) = \frac{1}{2} \sum_{j=0}^{h} |z_j|^2 \eta_j + \nu$. There exists a unique vector $\xi  \in \Z_{\geq 0}^{h+1}$ such that the following sequence is exact:
\begin{equation}\label{ses} 1 \to H \xrightarrow{\rho} (S^1)^{h+1} \xrightarrow{P} S^1 \to 1,
\end{equation}
where $\rho:H \to (S^1)^{h+1}$ is the slice representation at $[1,0,0]$ and $P(t_0,...,t_{h}) = \prod_{j=0}^{h} t_j^{\xi_j}$. Moreover, $\sum a_j \eta_j=0$ if and only if $a = \lambda \xi$ for some $\lambda \in \R$.
\end{lemma}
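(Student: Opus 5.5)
The plan is to reduce the statement to facts about tori and their character lattices, using the tallness hypothesis to control the weights. Since $\dim Y = 2\dim T$ and $Y = T\times_H \fh^\circ\times\C^{n}$, we have $\dim Y = 2\dim T - 2h + 2n$. So the complexity one condition forces $n = h+1$, and the slice representation is $\rho\colon H\to (S^1)^{h+1}$. Effectiveness of the $T$-action on $Y$ forces $\rho$ to be injective: an element of $H$ acting trivially on $\C^{h+1}$ acts trivially on all of $Y$, because $T$ is abelian. Hence $\rho(H)$ is a closed subgroup of $(S^1)^{h+1}$ of dimension $h$. The quotient $(S^1)^{h+1}/\rho(H)$ is then a compact connected abelian Lie group of dimension one, i.e.\ a circle. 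Any such quotient map to $S^1$ has the form $P(t)=\prod t_j^{\xi_j}$ for some $\xi\in\Z^{h+1}$, and $\xi$ is primitive because $P$ is surjective with kernel exactly $\rho(H)$. Exactness of \eqref{ses} then holds by construction. Uniqueness of $\xi$ up to sign follows because $\ker P = \rho(H)$ determines the annihilating character lattice $\{a\in\Z^{h+1}: \prod t_j^{a_j}=1 \text{ on } \rho(H)\}$. That lattice has rank one, and $\xi$ is a generator of it.

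Next I would identify the weight relations. A character $a\in\Z^{h+1}$ of $(S^1)^{h+1}$ restricts to the trivial character of the identity component of $H$ exactly when $\sum a_j\eta_j=0$ in $\fh^*$. The real relations $\{a\in\R^{h+1}:\sum a_j\eta_j=0\}$ form the kernel of the linear map $\R^{h+1}\to\fh^*$, $a\mapsto\sum a_j\eta_j$. This map is the dual of $d\rho$, which is injective, so the dual map is surjective and its kernel is one-dimensional. The vector $\xi$ lies in this kernel, since $P\circ\rho$ is trivial and differentiating gives $\sum\xi_j\eta_j=0$. As $\xi\neq 0$, the kernel is $\R\xi$, which proves the ``moreover'' clause.

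Tallness is used to choose the sign of $\xi$ so that $\xi\in\Z_{\ge0}^{h+1}$. The reduced space at $\Phi_Y([1,0,0])$ is $\Phi_H^{-1}(0)/H$, where $\Phi_H(z)=\tfrac12\sum|z_j|^2\eta_j$. If $\sum|z_j|^2\eta_j=0$, then $(|z_j|^2)=\lambda\xi$ by the previous paragraph, so $\Phi_H^{-1}(0)=\{z: |z_j|^2=\lambda\xi_j\}$. If $\xi$ had entries of both signs, only $\lambda=0$ would be allowed, so $\Phi_H^{-1}(0)=\{0\}$ and the reduced space would be a single point. That would make $[1,0,0]$ not tall. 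Hence all entries of $\xi$ share a sign, and replacing $\xi$ by $-\xi$ if necessary we get $\xi\in\Z_{\ge 0}^{h+1}$; this also pins down uniqueness.

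The main obstacle I anticipate is the case where $H$ is disconnected. There one must verify carefully that $\rho(H)$ is exactly $\ker P$, not merely a finite-index subgroup of it. The argument above handles this by defining $P$ as the quotient by $\rho(H)$ itself, not by its identity component. What then needs checking is that this quotient circle yields a primitive $\xi$ compatible with the real-relation description; this holds because the real relations only see $\fh$.
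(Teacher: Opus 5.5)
The paper does not prove this lemma itself; it imports it from Lemmas 5.4 and 5.8 of \cite{KT01}. Your proposal gives a correct self-contained proof. The dimension count gives $n=h+1$. Effectiveness makes $\rho$ injective, so $(S^1)^{h+1}/\rho(H)$ is a circle and the quotient map is a monomial $P$. The relation space $\{a\in\R^{h+1}:\sum a_j\eta_j=0\}$ is the kernel of the surjection dual to the injective map $d\rho$, hence equals the line $\R\xi$. Finally, tallness of $[1,0,0]$ forces all entries of $\xi$ to have one sign: mixed signs would give $\Phi_H^{-1}(0)=\{0\}$, making the reduced space a point. This selects one of the two admissible vectors $\pm\xi$ and gives uniqueness. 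Compared with the citation, your version shows exactly where effectiveness and tallness enter.

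You should, however, delete the claim that $\xi$ is primitive. It is false, and ``$P$ is surjective with kernel exactly $\rho(H)$'' does not imply it. For instance, take $h=0$ and $H=\Z/m\subset T$ acting on $\C$ through the inclusion $\mu_m\hookrightarrow S^1$; exactness forces $P(t_0)=t_0^m$, so $\xi=(m)$. Similarly, $H=\{(t_0,t_1)\in(S^1)^2 : t_0t_1=\pm1\}$ gives $\xi=(2,2)$. In general the annihilator of $\rho(H)$ in $\Z^{h+1}$ is a rank-one lattice that need not be saturated. Indeed, for $h=0$ the point $[1,0,0]$ is exceptional exactly when $\xi$ is non-primitive, by the criterion $\sum_{z_k=0}\xi_k>1$ of Lemma~\ref{lem:exceptional-orbits}. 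So primitivity would rule out every exceptional orbit with finite stabilizer.

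Fortunately nothing in your argument uses primitivity. Exactness holds by construction. Uniqueness only needs that, among multiples $m\xi$ of a generator, only $m=\pm1$ have kernel exactly $\rho(H)$; for $|m|\ge 2$ the kernel $P^{-1}(\mu_{|m|})$ is strictly larger. The real-relation statement only sees the line $\R\xi$. The ``what then needs checking'' in your last paragraph should therefore be replaced by this observation, not by a primitivity claim.
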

The monomial $P$ is called the {\bf defining monomial of the complexity one local model $Y$.}  We now give a criterion for exceptional points in terms of the exponents in the defining monomials.

\begin{lemma}\label{lem:exceptional-orbits}
    Let $Y = T \times_H \fh^\circ \times \C^{h+1}$ be a tall complexity one local model with moment map $\Phi_Y([t,\nu,z]) = \frac{1}{2} \sum_{k=0}^{h} |z_k|^2 \eta_k + \nu $ and let $\xi_0,\ldots,\xi_h \in \Z_{\geq 0}$ be the exponents of the defining monomial of $Y$. A point $[t, \nu, z] \in Y$ is exceptional if and only if
$\sum_{z_k = 0} \xi_k > 1$, where the sum is over all $k \in \{0,\dots,h\}$ such that $z_k = 0$.
\end{lemma}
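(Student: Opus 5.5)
We compute stabilizers in the local model and compare the stabilizer of a point with those of nearby points in its moment fiber. A point $y=[t,\nu,z]\in Y$ has stabilizer
\[ H_z=\{h\in H : \rho_k(h)=1 \text{ for all } k \text{ with } z_k\neq 0\}. \]
Let $J=\{k: z_k=0\}$. Nearby points in the fiber $\Phi_Y^{-1}(\Phi_Y(y))$ are of the form $[t',\nu',z']$ with $\nu'=\nu$, since the $\fh^\circ$ and $\fh^*$ components are complementary. Hence the condition is $\frac12\sum_k |z'_k|^2\eta_k=\frac12\sum_k|z_k|^2\eta_k$. By Lemma~\ref{lem:defining-monomial}, this says exactly that $|z'_k|^2-|z_k|^2=\lambda\xi_k$ for some $\lambda\in\R$. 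So the nearby fiber points are the $z'$ with $|z'_k|^2=|z_k|^2+\lambda\xi_k$ for small $\lambda$ (with the constraint $|z'_k|^2\ge 0$). Tallness enters here: if some $\xi_k=0$, then the corresponding weights still satisfy the relation, and tallness rules out degenerate cases where only $\lambda\le 0$ or only $\lambda\geq0$ is allowed. In particular, for $k\in J$ with $\xi_k>0$ we need $\lambda\ge 0$, and then $z'_k\neq0$ precisely for those $k\in J$ with $\xi_k>0$ when $\lambda>0$.

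We identify the stabilizers through the exact sequence \eqref{ses}. Since $\rho$ identifies $H$ with $\ker P$, we get
\[ H_z\cong\{s\in(S^1)^{h+1}: s_k=1\ (k\notin J),\ \textstyle\prod_{k\in J}s_k^{\xi_k}=1\}. \]
Write $J_+=\{k\in J:\xi_k>0\}$. The generic nearby fiber point (with $\lambda>0$) has stabilizer equal to the subgroup with $s_k=1$ for $k\notin J\setminus J_+$, together with $\prod s_k^{\xi_k}=1$. This reduces to $\{s: s_k=1 \text{ for } k\notin J\setminus J_+\}$, since $\xi_k=0$ on $J\setminus J_+$.

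The two groups coincide exactly when the torus factor $\{(s_k)_{k\in J_+}:\prod s_k^{\xi_k}=1\}$ is trivial. This holds iff $J_+=\emptyset$, or $|J_+|=1$ with $\xi_k=1$. In other words, it holds iff $\sum_{k\in J}\xi_k\le 1$. A subgroup in $(S^1)^m$ cut out by a single monomial is trivial only when $m\le1$ and the exponent is $0$ or $1$; otherwise it is a positive-dimensional group or a nontrivial finite cyclic group. Points with $0<\lambda$ small and the other nearby points (which only have fewer zero coordinates) have stabilizer contained in this generic one.

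We conclude as follows. If $\sum_J\xi_k\le1$, nearby fiber points include points with the same stabilizer (for instance $\lambda=0$ variations, or $\lambda>0$ points whose stabilizer equals $H_z$), so $y$ is not exceptional. If $\sum_J\xi_k>1$, every nearby fiber point other than those in the orbit of $y$ has $\lambda\ne0$, forcing $\lambda>0$, and hence has strictly smaller stabilizer. Points with $\lambda=0$ lie in the same $T$-orbit once one accounts for the phases: the residual phase freedom is exactly the torus action modulo $H$ together with the $H$-action on the slice. The main obstacle is this last verification, that fiber points with $\lambda=0$ near $y$ are in the orbit of $y$ or otherwise have the same stabilizer, together with the careful use of tallness to exclude the case where the $\xi_k$ force $\lambda\le0$.
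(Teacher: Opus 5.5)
Your proposal is correct and follows essentially the same route as the paper: identify $H$ with $\ker P$, compare $H_z=\{s_k=1\ (k\notin J),\ \prod_{k\in J}s_k^{\xi_k}=1\}$ with the stabilizer of a nearby fiber point with $\lambda>0$, and note that the factor $\{(s_k)_{k\in J_+}:\prod s_k^{\xi_k}=1\}$ is trivial iff $\sum_{k\in J}\xi_k\le 1$. The step you flag as the main obstacle is a one-line check that the paper leaves implicit: if some $k_0\in J$ has $\xi_{k_0}>0$, then $\ker P$ surjects onto the coordinates outside $J$ (solve for $t_{k_0}$), so every $\lambda=0$ fiber point near $y$ lies in $T\cdot y$; also, no further appeal to tallness is needed, because tallness is already encoded in $\xi\in\Z_{\ge 0}^{h+1}$, which forces $\lambda\ge 0$ whenever $J_+\neq\emptyset$.
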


\begin{proof}
    We compare the stabilizer groups of points in the same fiber. If we identify $H$ with its image under the embedding $\rho$ to $(S^1)^{h+1}$, the stabilizer group of $[t,\nu,z]$ is the intersection of $H$ with $\{(t_0,...,t_{h}): t_i =1 \textrm{ if } z_i \neq 0\}$. By the short exact sequence~\eqref{ses}, $\rho(H) = \ker P$, so 
    \[\textrm{stab}_{[t,\nu,z]} \cong \{(t_0,...,t_{h}): \prod_{z_i=0} t_i^{\xi_i} = 1 \textrm{ and } t_i =1 \textrm{ if } z_i \neq 0 \}.\]

    Since $[t,\nu,z]$ is a tall point, there exists a point $[t',\nu,w]$ such that $z \neq w$ and $\sum_{k=0}^{h} |z_k|^2 \eta_k =\sum_{k=0}^{h} |w_k|^2 \eta_k$. By Lemma~\ref{lem:defining-monomial}, there exists an $s \in \R \setminus\{0\}$ such that $|w_k|^2 -|z_k|^2 = s \xi_k$ for all $k=0,...,h$. Moreover, we can choose $[t',\nu,w]$ such that $|w_k| = |z_k|$ whenever $\xi_k=0$ and $w_k \neq 0$ whenever $\xi_k \neq 0$. Hence, $w_k=0$ if and only if $z_k=\xi_k=0$, so $\prod_{w_i = 0} t_i^{\xi_i}  =1$. Hence,
    \[\textrm{stab}_{[t',\nu,w]}\cong\{(t_0,...,t_{h}): t_i =1 \textrm{ if } z_i \neq 0 \textrm{ or } \xi_i \neq 0 \}.\]

    $[t,\nu,z]$ is exceptional if and only if $\textrm{stab}_{[t'\nu,w]} \subsetneq \textrm{stab}_{[t,\nu,z]}$. It is straightforward to check that $\textrm{stab}_{[t',w,\nu]} \subseteq \textrm{stab}_{[t,z,\nu]}$. The equality holds if and only if $\prod_{z_i=0} t_i^{\xi_i} = 1 $ implies $t_i =1$ for all $i$ such that $\xi_i \neq 0$. Notice that  $\prod_{z_i=0} t_i^{\xi_i} =  \prod_{z_i=0,\xi_i \neq0} t_i^{\xi_i}$, so the two stabilizer groups are equal if and only if one of the following holds:
    \begin{itemize}
        \item $z_i=0$ for all $i$ such that $\xi_i =0$, or
        \item there exists a unique $i \in \{0,\ldots,h\}$ such that $|\xi_i|=1$ and $z_i=0$.
    \end{itemize}
    
    Hence, $[t,\nu,z]$ is exceptional if and only if there exist $i,j$ such that $z_i=z_j=0$ and $\xi_i,\xi_j \neq 0$ or there exists $i$ such that $z_i=0$ and $\xi_i>1$. Therefore, $[t,\nu,z]$ is exceptional if and only if $\sum_{z_k=0} \xi_k >1$.
\end{proof}

Lemma~\ref{lem:exceptional-orbits} immediately implies the following.
\begin{corollary}\label{cor:model}
     Let $Y = T\times_H \fh^\circ \times \C^{h+1}$ be a tall complexity one local model. 
    \begin{enumerate}
        \item If $[1,0,0]$ is non-exceptional, then $Y$ contains no exceptional points.
        \item If $[1,0,0]$ is exceptional, then the set of points in $Y$ fixed by $H$ is $Y^H = T \times_H \fh^\circ \times \{0\}$.
    \end{enumerate}
\end{corollary}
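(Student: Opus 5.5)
Both parts follow from Lemma~\ref{lem:exceptional-orbits}, applied to the point $[1,0,0]$ and then to an arbitrary point of $Y$. At $[1,0,0]$ every coordinate $z_k$ vanishes, so the lemma says that $[1,0,0]$ is exceptional if and only if $\sum_{k=0}^h \xi_k > 1$.

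For (1), suppose $[1,0,0]$ is non-exceptional, so $\sum_{k=0}^h \xi_k \le 1$. Take any point $[t,\nu,z] \in Y$. Each $\xi_k$ is nonnegative, so the partial sum over $\{k : z_k = 0\}$ is bounded by the full sum:
\[\sum_{z_k=0}\xi_k \le \sum_{k=0}^h \xi_k \le 1.\]
By Lemma~\ref{lem:exceptional-orbits}, $[t,\nu,z]$ is not exceptional. Since the point was arbitrary, $Y$ contains no exceptional points.

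For (2), identify $H$ with $\rho(H)=\ker P \subset (S^1)^{h+1}$ using Lemma~\ref{lem:defining-monomial}. The $T$-action on the first factor of $T\times_H \fh^\circ \times \C^{h+1}$ commutes with the rest, so $[t,\nu,z]$ is fixed by $H$ exactly when $z$ is fixed by $\rho(H)$. This holds if and only if $\rho_k(H)=1$ for every $k$ with $z_k\neq 0$. The inclusion $T\times_H \fh^\circ\times\{0\}\subseteq Y^H$ is therefore immediate. For the reverse inclusion it suffices to show that no coordinate character $\rho_k$ is trivial on $H$. Suppose $\rho_k|_H$ were trivial. Then $\ker P$ would lie inside $\{t_k=1\}$. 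But $\ker P$ contains the full circle $\{(1,\dots,1,s,1,\dots,1)\}$ in the $j$-th slot for every $j$ with $\xi_j=0$. It also contains the torus
\[\{(t_0,\dots,t_h) : t_j = 1 \text{ for } \xi_j = 0,\ \textstyle\prod t_j^{\xi_j}=1\},\]
whose projection onto any coordinate $k$ with $\xi_k\ne 0$ is nontrivial once at least two $\xi_j$ are nonzero.

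Making this last step precise is where the exceptionality hypothesis enters, and it is the main point of the argument. Exceptionality of $[1,0,0]$ gives $\sum_k\xi_k>1$, which leaves three cases.
\begin{enumerate}
\item If $\xi_k=0$, the circle in the $k$-th slot lies in $\ker P$, so $\rho_k|_H$ is nontrivial.
\item If $\xi_k\neq0$ and some other $\xi_j\ne0$, choose a root of unity or circle element in the $k$-th slot and compensate in the $j$-th slot. This produces elements of $\ker P$ with $t_k\ne 1$.
\item If $\xi_k$ is the only nonzero exponent, then $\xi_k>1$. Elements with $t_k$ a nontrivial $\xi_k$-th root of unity and all other $t_j=1$ lie in $\ker P$ with $t_k\neq 1$.
\end{enumerate}
In every case $\rho_k$ is nontrivial on $H$. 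Hence an $H$-fixed point must have $z=0$, and $Y^H = T\times_H\fh^\circ\times\{0\}$.
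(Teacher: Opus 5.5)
Your proof is correct and follows essentially the paper's route: the paper simply asserts that the corollary is immediate from Lemma~\ref{lem:exceptional-orbits}. Your part (1) is exactly that application, and your part (2) supplies the omitted details using $\rho(H)=\ker P$ from Lemma~\ref{lem:defining-monomial}, just as the stabilizer computation in the proof of Lemma~\ref{lem:exceptional-orbits} does. One cosmetic point: the subgroup you call ``the torus'' need not be connected (it is finite when only one $\xi_j$ is nonzero), but your three-case analysis never uses connectedness, so nothing is lost.
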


Next, we show that the skeleton of each tall complexity one local model is $1$-colorable.

\begin{lemma}\label{lem:2-colorable-local-model}
Let $Y = T \times_H  \mathfrak{h}^\circ \times \C^{h+1}$ be a tall complexity one local model with moment map $\Phi_Y([t,\nu,z]) = \frac{1}{2} \sum_{j=0}^{h} |z_j|^2 \eta_j + \nu$. Let $\Sigma_Y$ be its skeleton. Then $\Sigma_Y$ is connected and the orbital moment map $\Phi_Y: Y/T \to \ft^*$ restricts to a closed injection on $\Sigma_Y$. 
\end{lemma}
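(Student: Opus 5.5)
By Lemma~\ref{lem:exceptional-orbits}, the skeleton of $Y$ can be described entirely by the exponents $\xi$ of the defining monomial. Let $S = \{k : \xi_k \neq 0\}$ and $N = \{k : \xi_k = 0\}$. Tallness is automatic in $Y$, since $Y$ is a tall model and every point of $Y$ is tall. So $\Sigma_Y$ is the image in $Y/T$ of
\[
\Big\{[t,\nu,z] \in Y : \sum_{z_k=0}\xi_k > 1\Big\}.
\]
If $\Sigma_Y$ is empty there is nothing to prove, so assume it is nonempty. By Corollary~\ref{cor:model}(1), $[1,0,0]$ is then exceptional.

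\textbf{Connectedness.} The defining set is invariant under scaling the coordinates $z_k$ with $k\in N$, and under scaling $\nu$, by any factor in $[0,1]$. It is also invariant under scaling $z_k$ with $k\in S$, because such scaling can only enlarge the set $\{k : z_k = 0\}$ and hence only increase $\sum_{z_k=0}\xi_k$. Thus the straight-line homotopy $s\mapsto[t,s\nu,sz]$, $s\in[0,1]$, stays inside the exceptional set. It joins every exceptional point to $[t,0,0]$, and the orbit $T\cdot[1,0,0]$ is a single point of $Y/T$. Hence $\Sigma_Y$ is path connected.

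\textbf{Injectivity.} Take two exceptional orbits $[t,\nu,z]$ and $[t',\nu',w]$ with the same moment value. Since $\Phi_H(z),\Phi_H(w)\in\fh^*$, $\nu,\nu'\in\fh^\circ$, and $\ft^*=\fh^*\oplus\fh^\circ$, we get $\nu=\nu'$ and $\sum_j(|z_j|^2-|w_j|^2)\eta_j=0$. By Lemma~\ref{lem:defining-monomial}, $|z_j|^2-|w_j|^2=\lambda\xi_j$ for all $j$.
\begin{itemize}
\item For $j\in N$ this gives $|z_j|=|w_j|$.
\item Suppose $\lambda>0$. Then $|z_j|>|w_j|\ge 0$ for every $j\in S$, so $z_j\neq 0$ on $S$ and $\sum_{z_k=0}\xi_k=0$. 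This contradicts exceptionality of $z$.
\item Symmetrically, $\lambda<0$ contradicts exceptionality of $w$.
\end{itemize}
So $\lambda=0$ and $|z_j|=|w_j|$ for all $j$. It remains to show the two points lie in the same $T$-orbit. The $T$-orbits in $Y$ correspond to $\nu$ together with $H$-orbits in $\C^{h+1}$, so we must show $w\in H\cdot z$.

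This is where I expect the main work to lie. Let $Z=\{k : z_k=0\}\supseteq\{k\in S: z_k=0\}$, and write $w=(u_k z_k)_k$ with $u\in(S^1)^{h+1}$, where we may set $u_k=1$ for $k\in Z$. Since $\rho(H)=\ker P$, it suffices to find $u'\in\ker P$ agreeing with $u$ on the coordinates outside $Z$. Such $u'$ exists if we can adjust the coordinates in $Z$ to correct $P(u)$, that is, if the map $(t_k)_{k\in Z}\mapsto\prod_{k\in Z}t_k^{\xi_k}$ onto $S^1$ is surjective. This holds as soon as some $k\in Z$ has $\xi_k\neq 0$, which follows from $\sum_{z_k=0}\xi_k>1$. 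So the orbital moment map is injective on $\Sigma_Y$.

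\textbf{Closedness.} The exceptional set is closed in $Y$, since it is a finite union of the closed sets $\{z_k=0\ \forall k\in A\}$ over the index sets $A$ with $\sum_{k\in A}\xi_k>1$. On this set, $\Phi_Y$ is proper: $\Phi_Y$ determines $\nu$, and $T$ is compact. For the $z$-coordinates, if $|z_j|^2$ were unbounded while $\Phi_H(z)$ stays bounded, then $\sum_j|z_j|^2\eta_j$ bounded with $|z|\to\infty$ forces, after normalizing by $|z|^2$ and passing to a limit, a nonzero nonnegative vector proportional to $\xi$. But on the exceptional set some $z_k$ with $\xi_k\neq 0$ vanishes, so we can subtract a multiple of $\xi$ to bound the coordinates. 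A cleaner route is to repeat the injectivity argument: within the exceptional set, $|z_j|^2$ is determined continuously by the moment value via the minimal representative, namely the unique $\lambda$ making some $S$-coordinate zero. A proper continuous map to a Hausdorff space is closed. So $\Phi_Y$ restricts to a closed injection on $\Sigma_Y$.
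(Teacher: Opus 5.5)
Your proof is correct, and two of its steps follow the paper: connectedness uses the same straight-line homotopy $s\mapsto[t,s\nu,sz]$, and the argument forcing $\lambda=0$ is equivalent to the paper's. You diverge in the remaining steps.

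\textbf{What the paper does.} After obtaining $|z_j|=|w_j|$, it invokes Lemma 6.2 of \cite{KT01}: the orbital map $(\Phi_Y,P)\colon Y/T\to\ft^*\times\C$ is a homeomorphism onto its image. Since $P\equiv 0$ on $\Sigma_Y$, this gives at once that $\Phi_Y|_{\Sigma_Y}$ is injective on orbits and is a homeomorphism onto its image. Closedness then follows because $\Phi_Y(\Sigma_Y)$ is a finite union of closed cones $\fh^\circ+\sum_{k\notin A}\R_{\ge0}\eta_k$.

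\textbf{What you do.} You make both steps self-contained. For injectivity, you pass from $|z_j|=|w_j|$ to $w\in H\cdot z$ using surjectivity of $\prod_{k\in Z}t_k^{\xi_k}$. The paper's text glosses this step and leaves it to the cited homeomorphism. For closedness, you show $\Phi_Y$ is proper on the closed exceptional set, using only the kernel statement of Lemma~\ref{lem:defining-monomial}. Given the citation, the paper's route is shorter. Yours is elementary, and properness gives closed image and a continuous inverse in one stroke.

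\textbf{Two points to tighten.}
\begin{enumerate}
\item The properness argument should end in a contradiction, not with ``subtract a multiple of $\xi$ to bound the coordinates.'' Pass to a subsequence on which one fixed $k$ with $\xi_k>0$ has $z^{(n)}_k=0$. The normalized limit is then $x=\lambda\xi$ with $\lambda>0$, yet $x_k=0$, which is impossible. Equivalently: on each coordinate subspace $\{y_k=0\}$ with $\xi_k\neq0$, the linear map $y\mapsto\sum_j y_j\eta_j$ is injective, hence bounded below.
\item Your ``cleaner route'' would by itself only give a homeomorphism onto the image. You would still need the image to be closed, via properness or the cone description.
\end{enumerate}
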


\begin{proof}
   Without loss of generality, we assume that $[1,0,0]$ is exceptional, otherwise by Corollary~\ref{cor:model}, $\Sigma_Y = \emptyset$. Let $\xi_0,\ldots, \xi_h \in \Z_{\geq 0}$ be the exponents of the defining monomial of $Y$. By Lemma~\ref{lem:defining-monomial}, $\sum \xi_j \eta_j =0$. By Lemma~\ref{lem:exceptional-orbits}, 
     $\Sigma_Y = \{[t,\nu,z] \in Y/T : \sum_{z_i = 0 } \xi_i > 1\}$.
     Furthermore, $\Sigma_Y$ is path-connected, as $\gamma(s)=[t,sz,s\nu]$ connects $[t,z,\nu] \in \Sigma_Y$ to $[1,0,0] \in \Sigma_Y$.

    Given two orbits $ [t,\nu,z],[t',\nu',w] \in \Sigma_Y$ such that $\Phi_Y([t,\nu,z]) =\Phi([t',\nu',w])$, then $\nu = \nu'$ and $0 =  \sum_{j} (|z_j|^2-|w_j|^2) \eta_j$. By Lemma~\ref{lem:defining-monomial}, there exists $\lambda \in \R$ such that $|z_j|^2-|w_j|^2 = \lambda\xi_j$ for all $j = 0,\ldots,h$. By assumption, there exist $i,k$ such that $z_i=w_k=0$ and $\xi_i,\xi_k >0$. Hence, $-|w_i|^2 = \lambda \xi_i$ and $|z_k|^2 =\lambda \xi_k$. Since $\xi_i,\xi_k >0$, these two equations imply that $\lambda =0$. Hence, $|w_j|=|z_j|$ for all $j$, so we conclude that $\Phi_Y$ restricts to an injection on $\Sigma_Y$. By Lemma 6.2 in~\cite{KT01}, the orbital map $F:= ({\Phi}_Y,{P}): Y/T \to \ft^* \times \C$ is a homeomorphism onto its image, where $P([t,\nu,z]) = \prod_{j=0}^{h} z_j^{\xi_j}$. By Lemma~\ref{lem:exceptional-orbits}, $P(\Sigma_Y)=0$. Hence, $\Phi_Y$ restricts to a homeomorphism from $\Sigma_Y$ to its image. Moreover,  Lemma~\ref{lem:exceptional-orbits} also implies that $\Phi_Y(\Sigma_Y)$ is a finite union of closed cones, so $\Phi_Y(\Sigma_Y)$ is closed in $\ft^*$ and it follows that $\Phi_Y:Y/T \to \ft^*$ restricts to a closed injection on $\Sigma_Y$.
\end{proof}

\begin{lemma}\label{lem:exceptional-saturated-new} 
Let $(M,\om,\Phi)$ be a connected tall complexity one $T$-space with proper moment map. Let $E \subset M/T$ be a clopen subset of the skeleton. Fix an $\alpha \in \Phi(M)$ and an orbit $\mathcal{O} \in \Phi^{-1}(\alpha)$ such that $\mathcal{O} \in E$ if $\alpha \in \Phi(E)$ and $\mathcal{O}$ is non-exceptional otherwise. Let $Y$ be the local model associated to $\mathcal{O}$ with moment map $\Phi_Y:Y \to \ft^*$. Let $\Sigma_Y$ be the skeleton of the model $Y$. Then there exist neighborhoods $U \subset \Phi(M)$ of $\alpha$ and $V \subset M/T$ of $\mathcal{O}$ such that $E \cap \Phi^{-1}(U) \cap V$ is isomorphic to $\Sigma_Y \cap \Phi_Y^{-1}(U)$.
\end{lemma}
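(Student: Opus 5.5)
Using the local normal form (Theorem~\ref{thm:lnf}), I will fix a $T$-invariant neighborhood $\Tilde V$ of $\mathcal{O}$ in $M$ together with an equivariant symplectomorphism $\psi$ from $\Tilde V$ onto a $T$-invariant neighborhood $\Tilde V_Y$ of $[1,0,0]$ in $Y$. I normalize $\Phi_Y$ so that $\Phi_Y([1,0,0])=\alpha$; since both moment maps are defined only up to translation, $\psi$ then intertwines $\Phi$ and $\Phi_Y$. An equivariant symplectomorphism preserves stabilizers, slice representations, tallness and moment fibers. Hence the induced homeomorphism $\bar\psi\colon \Tilde V/T \to \Tilde V_Y/T$ takes exceptional orbits to exceptional orbits and tall orbits to tall orbits. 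A small point to check is that "exceptional" refers to nearby orbits in the same fiber, which is a local condition, so it is detected inside $\Tilde V$. Consequently $\bar\psi$ restricts to an isomorphism from (skeleton of $M$)$\,\cap\, \Tilde V/T$ onto $\Sigma_Y \cap \Tilde V_Y/T$.

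Next I replace the skeleton by $E$, splitting into the two cases in the hypothesis. If $\alpha\notin\Phi(E)$, then $\mathcal{O}$ is non-exceptional, so Corollary~\ref{cor:model}(1) gives $\Sigma_Y=\emptyset$. Since $\Phi(E)$ is closed (see the last paragraph), there is a neighborhood $U$ of $\alpha$ disjoint from $\Phi(E)$, and then both sides are empty. If $\alpha\in\Phi(E)$, then $\mathcal{O}\in E$. By Lemma~\ref{lem:2-colorable-local-model}, $\Sigma_Y$ is connected, so the image $\bar\psi^{-1}(\Sigma_Y\cap \Tilde V_Y/T)$ is a subset of the skeleton containing $\mathcal{O}$. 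Since $E$ is clopen in the skeleton, it must contain this whole image, provided the image is connected. To arrange connectedness, I shrink $\Tilde V_Y$ to the preimage of a small ball under the homeomorphism $F=(\Phi_Y,P)$ from \cite[Lemma 6.2]{KT01}. The points of $\Sigma_Y$ in it are then those with $P=0$ and $\Phi_Y$ in a convex set, and they are path-connected by the radial rescaling $s\mapsto[t,s\nu,sz]$ used in Lemma~\ref{lem:2-colorable-local-model}, recentered at $\alpha$. This gives $E\cap V=\bar\psi^{-1}(\Sigma_Y\cap V_Y)$ with $V=\Tilde V/T$.

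Finally I choose $U$ so that $\Phi_Y^{-1}(U)$ is compatible with $V$: take $U$ a small ball around $\alpha$, intersected with $\Phi(M)$. The remaining issue is that $\Sigma_Y\cap\Phi_Y^{-1}(U)$ refers to all of $Y$, not just $V_Y$. Here I use Lemma~\ref{lem:2-colorable-local-model}: $\Phi_Y$ is a closed injection on $\Sigma_Y$, hence a homeomorphism onto its closed image. Therefore $\Sigma_Y\cap\Phi_Y^{-1}(U)$ is a small neighborhood of $[1,0,0]$ in $\Sigma_Y$, and it lies inside $V_Y$ once $U$ is small enough. For the case split I also need that $\Phi(E)$ is closed, which holds by properness of $\Phi$ together with $E$ being closed in the skeleton and the skeleton being closed in $M/T$. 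I expect this last step, making the neighborhoods $U$ and $V$ match so that $E\cap\Phi^{-1}(U)\cap V$ corresponds exactly to $\Sigma_Y\cap\Phi_Y^{-1}(U)$ with no leftover pieces of $\Sigma_Y$ outside the chart, to be the main technical obstacle, and the injectivity and closedness in Lemma~\ref{lem:2-colorable-local-model} is exactly what handles it.
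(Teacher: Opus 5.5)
Your proof is correct and follows the same overall plan as the paper. You use a local-normal-form chart that intertwines the moment maps, Corollary~\ref{cor:model}(1) when $\mathcal{O}$ is non-exceptional, and the closed injectivity of $\Phi_Y$ on $\Sigma_Y$ from Lemma~\ref{lem:2-colorable-local-model} to force $\Sigma_Y\cap\Phi_Y^{-1}(U)$ into the chart. The differences are in three sub-steps.

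\emph{Non-exceptional case.} The paper shrinks $V$ so that it misses the closed set of exceptional orbits. You instead shrink $U$ away from the closed set $\Phi(E)$, using properness. Both work.

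\emph{Exceptional case.} Write $\Sigma_M$ for the skeleton of $M$. The paper gets $\Sigma_M\cap V=E\cap V$ from openness of $E$ alone: pick an open $W\subset M/T$ with $W\cap\Sigma_M=E$ and replace $V$ by $V\cap W$. Your connectedness argument through $F=(\Phi_Y,P)$ and clopenness is therefore unnecessary. If you keep it, note that $s\mapsto[t,s\nu,sz]$ has moment image $\alpha+s\nu+s^2\Phi_H(z)$. This is not a segment and need not stay in an arbitrary convex set containing its endpoints, so use $s\mapsto[t,s\nu,\sqrt{s}\,z]$ instead, whose moment image is the straight segment.

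\emph{Matching $U$.} You take $U$ to be a small ball intersected with $\Phi(M)$ and use only the inclusion $\Sigma_Y\cap\Phi_Y^{-1}(U)\subset V_Y$. This is more direct than the paper's version, which also invokes Lemma~\ref{lem:local-cone} and openness of the moment maps. Your argument shows those ingredients are not needed for this statement.
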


\begin{proof}
    By Theorem~\ref{thm:lnf}, there exists a neighborhood $V \subset M/T$ of $\mathcal{O}$ that is equivariantly symplectomorphic to a neighborhood $V' \subset Y/T$ of the orbit $\{[t,0,0]\}$. Hence, the set of exceptional orbits in $V/T$ is isomorphic to $\Sigma_Y \cap V'/T$.

    If $\mathcal{O}$ is non-exceptional, then Corollary~\ref{cor:model} implies that $\Sigma_Y = \emptyset$. Moreover, Lemma~\ref{lem:exceptional-orbits} implies that the set of exceptional points in $M$ is a closed subset of $M$. By shrinking $V$ and $V'$ if necessary, we may assume that $V$ contains no exceptional orbits because $\mathcal{O}$ is not exceptional, so it follows that $E \cap V= \emptyset$. Hence, any open neighborhood $U$ of $\alpha$ will make both $E \cap \Phi^{-1}(U) \cap V$ and $\Sigma_Y \cap \Phi_Y^{-1}(U)$ empty sets.
    
    If $\mathcal{O} \in E$, since $E$ is an open subset of the skeleton, we may assume that the set of exceptional orbits in $V$ is equal to $E \cap V$ and hence $E \cap V$ is isomorphic to $\Sigma_Y \cap V'$. 
    By Lemma~\ref{lem:local-cone}, there exists a neighborhood $W$ of $\alpha$, open in both $\Phi(M)$ and $\Phi_Y(Y)$. Since both $\Phi, \Phi_Y$ are open maps onto their images (c.f.~\cite[Theorem 5.4 and Example 5.5]{Sjamaar} or ~\cite[Section 7]{BjoKar2010}), after possibly shrinking $W$ we can assume that $\Phi^{-1}(W) \cap V= V$ and $\Phi_Y^{-1}(W) \cap V' =  V'$, so $E \cap \Phi^{-1}(W) \cap V = E \cap V$ is isomorphic to $\Sigma_Y \cap V' = \Sigma_Y \cap \Phi_Y^{-1}(W) \cap V'$. Finally, by Lemma~\ref{lem:2-colorable-local-model}, $\Phi_Y$ is injective on $\Sigma_Y$, so $\Phi_Y^{-1}(\alpha) \cap \Sigma_Y \subset V'$. Moreover, the same lemma implies that $\Phi_Y$ restricts to a closed map from $\Sigma_Y$ to $\ft^*$, so there exists an open neighborhood $W' \subset \ft^*$ such that $\Phi_Y^{-1}(W') \cap \Sigma_Y \subset V'$. Let $U = W \cap W'$, then $E \cap \Phi^{-1}(U) \cap V$ is isomorphic to $\Sigma_Y \cap \Phi_Y^{-1}(U)$.
\end{proof}

\begin{lemma}\label{lem:toric-exceptional}
    Let $(M,\omega,\Phi)$ be a complexity one $T$-space with proper moment map. Let $H \subseteq T$ be a codimension $k$ subgroup and $N \subset M^H$ be a $2k$-dimensional connected component such that $T/H$ acts effectively on $N$. Then every point in $N$ is exceptional.
\end{lemma}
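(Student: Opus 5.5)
The plan is to argue by contradiction using the local normal form (Theorem~\ref{thm:lnf}) and the criterion of Lemma~\ref{lem:exceptional-orbits}. Set $m=\dim T$, so $\dim M = 2m+2$ and $\dim H = m-k$. Fix $p\in N$, let $H_p\supseteq H$ be its stabilizer, and set $h_p=\dim H_p$. The local model at $p$ is then $Y=T\times_{H_p}\fh_p^\circ\times\C^{h_p+1}$, and $p$ is tall since $M$ is assumed tall (I will take tallness as part of the standing hypotheses, as the paper does for complexity one spaces here). Let $\xi\in\Z_{\ge0}^{h_p+1}$ be the defining monomial exponents from Lemma~\ref{lem:defining-monomial}. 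By Lemma~\ref{lem:exceptional-orbits}, $p=[1,0,0]$ is exceptional if and only if $\sum_j\xi_j>1$. Suppose instead that $\sum_j \xi_j\le 1$. Since $P$ is surjective, $\xi\neq 0$, so after reordering $\xi_0=1$ and $\xi_j=0$ for $j\ge1$. Exactness of \eqref{ses} then identifies $\rho(H_p)=\ker P=\{t_0=1\}\cong (S^1)^{h_p}$. Thus $H_p$ is a torus that acts trivially on the coordinate $z_0$ and by the standard action on $z_1,\dots,z_{h_p}$.

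Next I would describe $N$ near $p$ inside the model. Since $N$ is a component of $M^H$ through $p$, near the orbit of $p$ it corresponds to $T\times_{H_p}\fh_p^\circ\times V$, where $V=(\C^{h_p+1})^H$ is the $H$-fixed subspace of the slice. The subspace $V$ is spanned by $z_0$ together with those $z_j$ ($j\ge1$) on which $H$ acts trivially. Comparing dimensions,
\[2(m-h_p)+2\dim_\C V = 2k,\]
so $\dim_\C V = h_p-(m-k) = \dim(H_p/H)$. Moreover, $V$ is exactly the symplectic slice at $p$ for the $T/H$-action on $N$, and the corresponding slice representation is that of the stabilizer $H_p/H$ of $p$ in $T/H$.

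Now I would use that $T/H$ acts effectively on $N$. Since $\dim N = 2\dim(T/H)$, the space $N$ is a symplectic toric manifold for $T/H$. Consequently the slice representation of $H_p/H$ on $V$ must be faithful, and its $\dim(H_p/H)=\dim_\C V$ weights must be linearly independent; this is the standard local structure of effective toric actions, obtained by applying Lemma~\ref{lem:defining-monomial}'s analogue or simply the local normal form for complexity zero. However, $H_p$, and hence $H_p/H$, acts trivially on the coordinate $z_0\in V$. So at most $\dim_\C V-1$ of these weights are nonzero, and they cannot be $\dim_\C V$ independent weights. This contradiction shows $\sum_j\xi_j>1$, so $p$ is exceptional.

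The main obstacle is the middle step: correctly identifying $N$ near $p$ with the $H$-fixed part of the model, and identifying $V$ with the toric slice at $p$. In particular, $H$ may be disconnected, so I would take fixed points of the full group $H$, not just of its identity component. I would also check that the $T$-orbit directions contribute exactly $2(m-h_p)$ dimensions, so that the dimension count $\dim_\C V=\dim(H_p/H)$ is right.
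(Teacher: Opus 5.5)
Your argument is correct at tall points, but it takes a different route from the paper. The paper argues directly on the fiber. An orbit $T\cdot p$ with stabilizer $K$ is exceptional exactly when it is a connected component of $M^K\cap\Phi^{-1}(\Phi(p))$. Since $K\supseteq H$, the component through $p$ lies in $N\cap\Phi^{-1}(\Phi(p))$. That set is a single orbit because $N$ is a symplectic toric $(T/H)$-manifold.

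You instead work in the local model. Non-exceptionality forces $\xi$ to be a unit vector, so $H_p$ acts trivially on $z_0$, and that direction lies in $V=(\C^{h_p+1})^H$. With your (correct) count $\dim_\C V=\dim(H_p/H)=:d$, a faithful representation of $H_p/H$ on $\C^d$ with a trivial summand would embed a $d$-dimensional group into $(S^1)^{d-1}$, which is impossible. Faithfulness can be justified cleanly: an element in the kernel of the slice representation fixes the whole open subset $T\times_{H_p}\fh_p^\circ\times V$ of the connected manifold $N$. It therefore acts trivially on $N$, and so is the identity by effectiveness of $T/H$.

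Your route stays inside the local normal form and Lemmas~\ref{lem:defining-monomial} and~\ref{lem:exceptional-orbits}. The paper's is two lines and needs no weight bookkeeping.

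The one point to fix is that the lemma does not assume tallness. Only Corollary~\ref{cor:exceptional-component}, where the lemma is applied, does, so ``as the paper does'' is not accurate here. Lemmas~\ref{lem:defining-monomial} and~\ref{lem:exceptional-orbits} are unavailable at a short point. The repair is one sentence. If $p$ is short, then $T\cdot p$ is isolated in $\Phi^{-1}(\Phi(p))/T$, so there are no other nearby orbits in the fiber and $T\cdot p$ is exceptional vacuously. Equivalently, $T\cdot p$ is a component of $\Phi^{-1}(\Phi(p))$, hence of $M^K\cap\Phi^{-1}(\Phi(p))$, matching the paper's criterion. With that case added, your proof covers the statement as written.
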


\begin{proof}
    Points in an orbit $T \cdot p \subset M$ with stabilizer $K \subseteq T$ are exceptional exactly if
$T \cdot p$ is a connected component of $M^K \cap \Phi^{-1}(\Phi(p))$.
Since $\dim(T/H)=k$ and $\dim(N) = 2k$, this implies that $\Phi^{-1}(\beta) \cap N$ contains exactly one orbit for all $\beta \in \Phi(N)$.
Therefore, every point in $N$ is exceptional.
\end{proof}

The following corollary is an immediate consequence of item (2) of Corollary~\ref{cor:model}, Lemma~\ref{lem:toric-exceptional} and~\cite[Corollary 2.6]{KT14}.

\begin{corollary}\label{cor:exceptional-component}
    Let $(M,\omega,\Phi)$ be a tall complexity one $T$-space with proper moment map. Let $p \in M$ be an exceptional point with stabilizer group $H$. Let $N$ be the connected component of $M^H$ that contains $p$. Then every point in $N$ is exceptional and $N$ is a connected symplectic toric $(T/H)$-manifold.
\end{corollary}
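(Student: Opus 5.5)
The corollary bundles three claims: every point of $N$ is exceptional, $N$ is a symplectic toric $(T/H)$-manifold, and $N$ is connected. Connectedness holds by definition, since $N$ is a connected component. The plan is to obtain the toric structure from the local model at $p$ together with \cite[Corollary 2.6]{KT14}, and then to apply Lemma~\ref{lem:toric-exceptional}.

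First, I would use the local normal form, Theorem~\ref{thm:lnf}, to identify a neighborhood of $T\cdot p$ with a neighborhood of $[1,0,0]$ in the tall complexity one local model $Y = T\times_H \fh^\circ \times \C^{h+1}$, where $h=\dim H$. Since $p$ is exceptional, Corollary~\ref{cor:model}(2) gives $Y^H = T\times_H \fh^\circ \times \{0\}$. This set has dimension $(\dim T - h) + (\dim T - h) = 2\dim(T/H)$. So near $p$, the component $N$ of $M^H$ is a submanifold of dimension $2k$ with $k = \operatorname{codim} H$.

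Next, I need $\dim N = 2k$ globally and the induced $T/H$-action to be effective. The dimension is constant on the connected component $N$ because $M^H$ is a closed submanifold, so the local computation suffices. Moreover, $N$ is a symplectic submanifold, being a component of the fixed set of a compact group, and $T/H$ acts on it in a Hamiltonian fashion, with moment map the restriction of $\Phi$ shifted into $\fh^\circ$. Effectiveness is visible at $p$: in $Y^H$ the stabilizer of $[t,\nu,0]$ is exactly $H$, so $T/H$ acts freely near $p$ and hence effectively on $N$. Thus $N$ is a connected symplectic manifold of dimension $2\dim(T/H)$ with an effective Hamiltonian $T/H$-action, that is, a symplectic toric $(T/H)$-manifold. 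Properness of the moment map on $N$ follows because $N$ is closed in $M$. I would cite \cite[Corollary 2.6]{KT14} to package this step, presumably as the statement that a component of $M^H$ containing an exceptional point with stabilizer $H$ has this form.

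Finally, I would apply Lemma~\ref{lem:toric-exceptional} with this $H$ and $N$: $H$ has codimension $k$, $\dim N = 2k$, $N$ is connected, and $T/H$ acts effectively. The lemma then gives that every point in $N$ is exceptional.

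The step I expect to be the main obstacle is verifying effectiveness, and the fact that the generic stabilizer on $N$ is exactly $H$ rather than a larger group. This follows from the local model at $p$, where the stabilizer of points of $Y^H$ near $[1,0,0]$ is exactly $H$. Since $T$ is abelian, the set of points with stabilizer exactly $H$ is open and dense in $N$, so the stabilizer is $H$ generically on $N$. This is the content I would take from \cite[Corollary 2.6]{KT14}.
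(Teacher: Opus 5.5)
Your proposal is correct and takes the paper's route. The paper states this corollary as an immediate consequence of Corollary~\ref{cor:model}(2), Lemma~\ref{lem:toric-exceptional} and \cite[Corollary 2.6]{KT14}, which are exactly the ingredients you combine; your explicit checks that $\dim N = 2\dim(T/H)$ and that $T/H$ acts effectively on $N$ (because points of $Y^H$ near $[1,0,0]$ have stabilizer exactly $H$) spell out what the paper leaves to those citations.
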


\section{Proof of Theorem~\ref{thm:k-colorability}}\label{sec:k-colorability}
In this section, we prove Theorem~\ref{thm:k-colorability}. We first prove that the orbital moment map is injective on a connected component of the skeleton if and only if it is injective on the intersection of this component with the one-skeleton. We need the following two lemmas.

\begin{lemma}\label{lem:affine-linelocalmodel} 
Let $Y = T \times_{H} \mathfrak{h}^\circ \times \C^{h+1}$ be a tall complexity one local model with moment map $\Phi_Y([t,\nu,z]) = \frac{1}{2}\sum_{i=0}^h |z_i|^2 \eta_i + \nu$ such that $[1,0,0]$ is exceptional. Let $\mathcal L \subset \mathfrak t^*$ be an affine line such that $\mathcal L \subset \Phi_Y(Y)$ and assume that $\mathcal L' \cap \mathfrak h^\circ = \{0\}$, where $\mathcal L'$ is the line through the origin which is parallel to $\mathcal L$.
Then there exists an exceptional orbit whose moment image lies in $\mathcal L$.
\end{lemma}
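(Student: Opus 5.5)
We will show directly that $\mathcal L$ meets $\Phi_Y(\Sigma_Y)$, where $\Sigma_Y$ is the skeleton of $Y$. By Lemma~\ref{lem:exceptional-orbits}, a convenient subset of $\Sigma_Y$ is
\[
Y^H/T = \{[t,\nu,0]\} \cong \fh^\circ ,
\]
whose image under $\Phi_Y$ is the subspace $\fh^\circ$ itself. Here every $z_k=0$, so $\sum_{z_k=0}\xi_k=\sum_k\xi_k>1$, because $[1,0,0]$ is exceptional. It therefore suffices to show that $\mathcal L$ meets $\fh^\circ$, and then take the exceptional orbit $[1,\nu,0]$ with $\nu\in\mathcal L\cap\fh^\circ$.

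To find such a point, decompose $\ft^* = \fh^\circ \oplus \fh^*$, using the fixed inner product to identify $\fh^*$ with a complement of $\fh^\circ$. Let $\pi\colon \ft^*\to\fh^*$ be the projection along $\fh^\circ$. Then $\pi\circ\Phi_Y([t,\nu,z]) = \frac12\sum_j |z_j|^2\eta_j$, so $\pi(\Phi_Y(Y)) = C$, where $C\subset\fh^*$ is the cone spanned by $\eta_0,\ldots,\eta_h$. Write $\mathcal L=\{\beta+s v: s\in\R\}$, with $v$ spanning $\mathcal L'$. Since $\mathcal L'\cap\fh^\circ=\{0\}$, we have $\pi(v)\neq 0$, so $\pi(\mathcal L)$ is an honest affine line in $\fh^*$. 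This line is contained in $C$.

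The key claim is that $C$ is not a pointed cone; in fact $C=\fh^*$. By Lemma~\ref{lem:defining-monomial}, $\sum_j\xi_j\eta_j=0$ with all $\xi_j\ge0$, and the $\eta_j$ span $\fh^*$ because $H$ acts effectively on the slice, since $\rho$ is injective. Moreover, the $h+1$ vectors $\eta_j$ in the $h$-dimensional space $\fh^*$ have a one-dimensional space of relations spanned by $\xi$.

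To show $\pi(\mathcal L)$ passes through $0$, I would argue as follows. The recession cone of $C$ contains $\pm\pi(v)$, so $\pi(v)$ lies in the lineality space $C\cap(-C)$. Every point $\sum a_j\eta_j$ with $a\ge0$ that lies on the line through $\pi(\beta)$ in direction $\pi(v)$ can be written, for some $s$, as $\pi(\beta)+s\pi(v)$. We want $s$ with $\pi(\beta)+s\pi(v)=0$, i.e.\ $\pi(\beta)\in\R\,\pi(v)$. This is the main obstacle: in general a line inside a cone need not pass through its apex, so we need structural input. That input is that the lineality space of $C$ is spanned by the $\eta_j$ with $\xi_j>0$, while the remaining $\eta_j$ with $\xi_j=0$ are linearly independent modulo it. So $C$ is a product $L\times(\text{simplicial cone})$, and a line in $C$ lies in a translate of $L$ by a point of the simplicial cone, not necessarily $L$ itself. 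I would therefore exploit the freedom in the statement: tallness, together with $[1,0,0]$ being exceptional, should force more. If that fails, I would replace the target set $Y^H/T$ by the larger skeleton. Points with $z_k\neq0$ only for indices with $\xi_k=0$ are exceptional, since then $\sum_{z_k=0}\xi_k=\sum\xi_k>1$, and their images are $\fh^\circ$ plus the simplicial cone spanned by $\{\eta_j:\xi_j=0\}$. By the product decomposition of $C$, this set is exactly the set of points of $\Phi_Y(Y)$ whose $L$-component can be taken to be zero. Since $\pi(\mathcal L)$ is a translate of a line in $L$ by a point $c$ of the simplicial factor, some point of $\mathcal L$ has $\pi$-image equal to $c$. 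That point is the image of an exceptional orbit $[1,\nu,z]$ with $z$ supported on $\{j:\xi_j=0\}$, which finishes the proof.
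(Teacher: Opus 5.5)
\textbf{There is a genuine gap in your final step.} Your structural analysis is correct. With $S=\{j:\xi_j>0\}$, the lineality space of $C$ is $L=\operatorname{span}\{\eta_j:j\in S\}$, and $C=L\oplus K$ with $K=\operatorname{cone}\{\eta_j:\xi_j=0\}$ simplicial. Also $\pi(v)\in L$.

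The problem comes next. Write $\pi(\beta)=l_0+c$ with $l_0\in L$ and $c\in K$. Then $\pi(\mathcal L)=c+(l_0+\R\,\pi(v))$, and $l_0+\R\,\pi(v)$ is an \emph{affine} line in $L$. Your assertion that some point of $\mathcal L$ has $\pi$-image $c$ is exactly the statement $l_0\in\R\,\pi(v)$. That is the same ``line through the apex'' obstacle you flagged earlier, now moved inside $L$. It is automatic only when $\dim L=1$, i.e.\ when exactly two $\xi_j$ are positive.

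Here is a concrete failure. Let $H=T=(S^1)^2$ act on $\C^3$ with weights $(1,0),(0,1),(-1,-1)$, so $\xi=(1,1,1)$, $L=\fh^*=\R^2$ and $K=\{0\}$. Take $\mathcal L=\{(x,1):x\in\R\}$. Your candidate set (points with $z$ supported where $\xi_j=0$) is just $\{z=0\}$, whose image $0$ is not on $\mathcal L$. Yet the lemma holds: $z=(0,\sqrt2,0)$ has image $(0,1)\in\mathcal L$ and $\sum_{z_j=0}\xi_j=2$. Separately, your intermediate claim $C=\fh^*$ is false whenever some $\xi_j=0$, though you do not use it in the end.

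\textbf{The missing idea} is to use the part of the skeleton where only two coordinates $z_k,z_m$ vanish, with $k\neq m$ in $S$. By Lemma~\ref{lem:exceptional-orbits}, such points are already exceptional. For $k\in S$ set $\sigma_k=\operatorname{cone}\{\eta_j:j\in S\setminus\{k\}\}$. These cones have three properties:
\begin{enumerate}
\item They are simplicial, since any $|S|-1$ of the $\eta_j$ with $j\in S$ are linearly independent by the uniqueness of the relation $\xi$.
\item They cover $L$: given a nonnegative combination, subtract a suitable multiple of $\xi$ to make one coefficient vanish.
\item $\sigma_k\cap\sigma_m=\operatorname{cone}\{\eta_j:j\in S\setminus\{k,m\}\}$, again by uniqueness of $\xi$.
\end{enumerate}
Since $0\neq\pi(v)\in L$, the affine line $l_0+\R\,\pi(v)\subset L$ lies in no single pointed cone $\sigma_k$. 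Being connected and covered by finitely many of these closed cones, it must meet some wall $\sigma_k\cap\sigma_m$ with $k\neq m$. Adding $c$ and the $\fh^\circ$-component of the corresponding point of $\mathcal L$ gives $\Phi_Y([1,\nu,z])$ for some $z$ with $z_k=z_m=0$, hence $\sum_{z_j=0}\xi_j\ge 2$.

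This wall-crossing step is what the paper does. It writes a point of $\mathcal L$ as a positive combination over a linearly independent set $\{\eta_i\}_{i\in I}$ and notes that the pointed cone $C_I$ cannot contain $\mathcal L$. At the point where $\mathcal L$ exits $C_I$, it finds a vanishing coefficient $a_{i_0}$ with $\xi_{i_0}\neq 0$, in addition to the indices in $J\setminus I$.
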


\begin{proof}
    By Lemma~\ref{lem:exceptional-orbits}, $[t,\nu,z]$ is exceptional if and only if $[t,0,z]$ is exceptional. Since $\mathcal{L}' \cap \mathfrak{h}^\circ =\{0\}$, we can assume without loss of generality that $\mathfrak{h}^\circ = \{0\}$.

    Fix $\alpha \in \mathcal{L}$, we show that there exist an index set $I$ with $|I| \leq h$ and $x_i > 0$ for each $i \in I$ such that $\alpha = \sum_{i \in I} x_i  \eta_i$ and $\{\eta_i: i \in I\}$ is linearly independent.
    Since $\alpha \in \Phi_Y(Y)$, there exist $y_0,\ldots,y_h \geq 0$ such that $\alpha = \sum_{i=0}^{h} y_i \eta_i$. Let $I^+:=\{i: y_i >0\}$. Without loss of generality, we can assume that $\{\eta_i: i\in I^+\}$ is linearly dependent. Let $\xi_0,\ldots,\xi_h \in \Z_{\geq 0}$ be the exponents of the defining monomial. Since $\{\eta_i: i\in I^+\}$ is linearly dependent, the last claim of Lemma~\ref{lem:defining-monomial} implies that $\xi_i =0$ for all $i \notin I^+$.
    Choose $s_0 <0$ such that $y_i+s_0\xi_i \geq 0$ for all $i=0,\ldots,h$ and $y_k + s_0\xi_k = 0$ for at least one $k \in I^+$. Notice that by construction $\xi_k >0$. Define $x_i:= y_i+s_0\xi_i$ and $I:=\{i: x_i >0\}$. By Lemma~\ref{lem:defining-monomial}, $\sum \xi_i \eta_i=0$, so $\alpha = \sum_{i=0}^h y_i \eta_i  =  \sum_{i=0}^h (y_i+s_0\xi_i) \eta_i  = \sum_{i=0}^h x_i \eta_i = \sum_{i \in I}  x_i \eta_i$. We show that $\{\eta_i: i \in I\}$ is linearly independent. Suppose $\sum_{i\in I} a_i \eta_i =0$. Then by the last claim of Lemma~\ref{lem:defining-monomial}, there exists a $\lambda \in \R$ such that $a_i = \lambda \xi_i$ for all $i=0,\ldots,h$. In particular, since $k \notin I$, $0=a_k = \lambda \xi_k$, so $\lambda =0$ and hence $\{\eta_i: i \in I\}$ is linearly independent.
    
    Since $\{\eta_i: i \in I\}$ is linearly independent, $\mathcal{L}$ is not contained in the proper cone $C_I:= \sum_{i \in I} \R_{\geq 0} \eta_i$, i.e. there exists a $\beta \in \mathcal{L} \setminus C_I$. Similarly, there exist an index set $J \not\subset I$ with $|J| \leq h$ and $w_j > 0$ for each $j \in J$ such that $\beta = \sum_{j \in J} w_j  \eta_j$ and $\{\eta_j: j \in J\}$ is linearly independent. Let $\gamma(t) = (1-t)\alpha + t\beta$, then there exists $t_0 \in(0,1)$ such that $\{t \in[0,1]: \gamma(t) \in C_I\} = [0,t_0]$. Hence, there exist $a_i \geq 0$ for each $i \in I$ and $i_0 \in I$ such that $a_{i_0}=0$ and $(1-t_0)\sum_{i \in I}x_i \eta_i +t_0 \sum_{j \in J}w_j \eta_j = \sum_{i \in I} a_i \eta_i$. By Lemma~\ref{lem:defining-monomial}, $\xi_j \neq 0$ for all $j \in J \setminus I$ and $\xi_{i_0} \neq 0$. Choose $z \in \C^{h+1}$ such that $z_i = \sqrt{a_i}$ for all $i\in I$ and $z_i =0$ for $i \notin I$. Now, $\sum_{z_j=0} \xi_j \geq \sum_{j \in J \setminus I} \xi_j+ \xi_{i_0}$. Since $\xi_j \neq 0$ for all $j \in J \setminus I$ and $\xi_{i_0} \neq 0$, $\sum_{z_j=0} \xi_j \geq 2$. By Lemma~\ref{lem:exceptional-orbits}, $[1,0,z]$ is an exceptional point and thus $\Phi_Y([1,0,z]) =\gamma(t_0) \in \mathcal{L}$ is the moment image of an exceptional orbit. 
\end{proof}

\begin{lemma}\label{lem:uppersemi}
Let $(M, \omega, \Phi)$ be a tall complexity one $T$-space with proper moment map and let $E$ be a nonempty clopen subset of the skeleton. The function
$$\alpha \mapsto \big|\Phi^{-1}(\alpha) \cap E \big|$$ is upper semi-continuous, where $\Phi: M/T \to \ft^*$ is the orbital moment map.
\end{lemma}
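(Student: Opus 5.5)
Fix $\alpha \in \ft^*$. We need a neighborhood $U$ of $\alpha$ such that $|\Phi^{-1}(\beta)\cap E| \le |\Phi^{-1}(\alpha)\cap E|$ for all $\beta \in U$. If $\alpha \notin \Phi(M)$, then by properness $\Phi(M)$ is closed in the convex open target, so a neighborhood of $\alpha$ misses $\Phi(M)$ and the function vanishes there. From now on we assume $\alpha \in \Phi(M)$.

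\emph{Finiteness over $\alpha$.} The fiber $\Phi^{-1}(\alpha)/T$ is compact by properness, and $E$ is closed in the skeleton, hence closed in $M/T$: the set of exceptional points is closed by Lemma~\ref{lem:exceptional-orbits}, and tallness holds everywhere. So $\Phi^{-1}(\alpha)\cap E$ is compact. For each orbit $\mathcal O \in \Phi^{-1}(\alpha)\cap E$, Lemma~\ref{lem:exceptional-saturated-new} gives neighborhoods $U_{\mathcal O}$ of $\alpha$ and $V_{\mathcal O}$ of $\mathcal O$ with $E\cap\Phi^{-1}(U_{\mathcal O})\cap V_{\mathcal O} \cong \Sigma_{Y}\cap \Phi_Y^{-1}(U_{\mathcal O})$. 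By Lemma~\ref{lem:2-colorable-local-model}, $\Phi_Y$ is injective on $\Sigma_Y$, so $V_{\mathcal O}$ contains at most one orbit of $E$ over each moment value, and hence exactly one over $\alpha$. The sets $V_{\mathcal O}$ therefore cover the compact set $\Phi^{-1}(\alpha)\cap E$, and each contains only one point of it. This forces $\Phi^{-1}(\alpha)\cap E$ to be finite, say $\{\mathcal O_1,\dots,\mathcal O_m\}$.

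\emph{Controlling nearby fibers.} Let $V = V_{\mathcal O_1}\cup\dots\cup V_{\mathcal O_m}$, an open set containing $\Phi^{-1}(\alpha)\cap E$. The key claim is that for $\beta$ near $\alpha$, every point of $\Phi^{-1}(\beta)\cap E$ lies in $V$. Suppose not. Then there are $\beta_n \to \alpha$ and orbits $x_n \in (E\setminus V)\cap\Phi^{-1}(\beta_n)$. By properness, the $x_n$ lie in a compact subset of $M/T$, so after passing to a subsequence $x_n \to x$. Since $E\setminus V$ is closed, $x \in E\setminus V$ with $\Phi(x)=\alpha$, which contradicts $\Phi^{-1}(\alpha)\cap E\subset V$.

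\emph{Conclusion.} Take $U = U_{\mathcal O_1}\cap\dots\cap U_{\mathcal O_m}$, intersected with the neighborhood from the previous step. For $\beta\in U$, each point of $\Phi^{-1}(\beta)\cap E$ lies in some $V_{\mathcal O_i}$, and each $V_{\mathcal O_i}$ contributes at most one orbit over $\beta$ by the injectivity above. Hence $|\Phi^{-1}(\beta)\cap E|\le m$.

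The main point needing care is the properness/compactness argument. One must use properness of the orbital map $M/T\to$ (convex open set) and closedness of $E$ in $M/T$. Closedness of $E$ follows since the skeleton is closed: exceptionality is a closed condition by the local criterion of Lemma~\ref{lem:exceptional-orbits} together with Corollary~\ref{cor:model}.
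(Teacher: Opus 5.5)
Your proof is correct and follows essentially the paper's route: near each orbit of $\Phi^{-1}(\alpha)\cap E$, the local model shows that $E$ has at most one orbit per moment value (Lemma~\ref{lem:2-colorable-local-model}), and a properness argument confines nearby fibers of $E$ to those neighborhoods. The differences are cosmetic. You derive finiteness of $\Phi^{-1}(\alpha)\cap E$ from compactness instead of citing \cite[Corollary 2.5]{KT14}, and you use closedness of $E\setminus V$ where the paper covers all of $\Phi^{-1}(\alpha)$ by local-model neighborhoods, shrinking those around orbits outside $E$ so they miss $E$.
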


\begin{proof}
We first notice that by~\cite[Corollary 2.5]{KT14}, the number of exceptional orbits in any fiber is finite.
Given $\alpha \in \Delta$ and an orbit $o \in \Phi^{-1}(\alpha)$, we can identify a neighborhood $V_o$ of $o \subset M$ with an open subset of the local model $Y_o$ associated to $o$. By Lemma~\ref{lem:exceptional-orbits}, $E$ is a closed subset of $M/T$, so if $o \notin E$, we can shrink $V_o$ so that it contains no exceptional orbits in $E$. Together with Lemma~\ref{lem:2-colorable-local-model}, this implies that each moment fiber of $Y_o$ contains at most one exceptional orbit in $E$.

Since $\Phi$ is proper, there exist $o_1,\dots,o_k \in \Phi^{-1}(\alpha)$ and $0 \leq m \leq k$ such that $\Phi^{-1}(\alpha) \subseteq \cup_{i} V_{o_i}$ and such that $o_1,\ldots,o_m$ are exactly the exceptional orbits in $\Phi^{-1}(\alpha)\cap E$. Notice that the moment map $\Phi$ is closed, because it is proper and $\mathfrak t^*$ is locally compact and Hausdorff.
  Hence, there exists a neighborhood $U$ of $\alpha$ such that $\Phi^{-1}(U) \subseteq \cup_{i} V_{o_i}$. Now, for any point $\beta \in V$, $\Phi^{-1}(\beta) \cap V_{o_i}$ contains no exceptional orbits in $E$ if $i >m$ and at most one exceptional orbit in $E$ if $i \leq m$. Hence, $|\Phi^{-1}(\beta) \cap E| \leq |\Phi^{-1}(\alpha) \cap E|$.
\end{proof}

\begin{proposition} \label{prop:injectivity}
Let $(M,\omega, \Phi)$ be a compact connected tall complexity one $T$-space.
Let $E$ be a nonempty clopen subset of the skeleton of $M$ and let $\Gamma= \{\mathcal{O} \in E: \operatorname{dim} (\mathcal{O})\leq 1\}$. 
If the orbital moment map $\Phi: M/T \to \ft^*$ is injective on $\Gamma$, then it is injective on $E$.
\end{proposition}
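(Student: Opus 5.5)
Assume, for contradiction, that $\Phi$ is injective on $\Gamma$ but not on $E$. We will reduce the dimension of the offending orbits step by step until they have dimension at most one, which contradicts injectivity on $\Gamma$.

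\emph{Setting up.} Pick two distinct orbits $\mathcal O_1,\mathcal O_2\in E$ with $\Phi(\mathcal O_1)=\Phi(\mathcal O_2)=\alpha$. Among all such pairs, choose one minimizing $\dim\mathcal O_1+\dim\mathcal O_2$. If both orbits have dimension $\le 1$, they lie in $\Gamma$ and we are done. So assume $\dim \mathcal O_1\ge 2$, and let $H_1$ be its stabilizer. By Corollary~\ref{cor:exceptional-component}, the component $N_1$ of $M^{H_1}$ through $\mathcal O_1$ is a compact connected symplectic toric $(T/H_1)$-manifold consisting of exceptional orbits. Since $E$ is clopen in the skeleton and $N_1/T$ is connected, $N_1/T\subset E$. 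Its moment image is a Delzant polytope $\Delta_1$ of dimension $\dim\mathcal O_1\ge 2$. The orbit $\mathcal O_1$ lies over $\alpha$, which is in the relative interior of some face of $\Delta_1$. Lower-dimensional orbits of $N_1$ lie over the lower-dimensional faces, and the vertices of $\Delta_1$ are fixed orbits.

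\emph{Main step: moving along a line.} Choose a direction $\mathcal L'$ inside the affine span of the face of $\Delta_1$ containing $\alpha$ (so $\mathcal L'\subset \fh_1^\circ$), chosen generically. Move $\alpha$ along $\mathcal L=\alpha+\mathcal L'$ until we first hit a boundary point $\alpha'$ of that face. Over each point of the segment sits an orbit of $N_1$, and over $\alpha'$ sits an orbit of strictly smaller dimension. We want a second orbit of $E$ over the segment that also persists to the endpoint. Near $\mathcal O_2$, use the local model $Y_2$ via Lemma~\ref{lem:exceptional-saturated-new}. By Lemma~\ref{lem:local-cone}, $\Phi(M)$ near $\alpha$ agrees with $\Phi_{Y_2}(Y_2)$, and the face of $\Delta_1$ through $\alpha$ lies in $\Phi(M)$.
\begin{itemize}
\item If the direction satisfies $\mathcal L'\cap\fh_2^\circ=\{0\}$, Lemma~\ref{lem:affine-linelocalmodel} applied to $Y_2$ gives, near $\alpha$, exceptional orbits of $E$ near $\mathcal O_2$ over points of $\mathcal L$.
\item If $\mathcal L'\subset\fh_2^\circ$, the $N_2$-sheet itself follows $\mathcal L$.
\end{itemize}
Either way we get two distinct $E$-orbits over points of $\mathcal L$ near $\alpha$. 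The same local argument applies at every point of the segment where two $E$-orbits coincide in image. Lemma~\ref{lem:uppersemi} (upper semicontinuity) together with properness and the closedness of $E$ then shows that the set of $t$ with $|\Phi^{-1}(\gamma(t))\cap E|\ge 2$ is closed. The local argument shows it is also open in the segment, so it is the whole segment, including $\alpha'$. At $\alpha'$, the orbit coming from $N_1$ has smaller dimension than $\mathcal O_1$. Taking limits, the partner orbit has dimension at most $\dim \mathcal O_2$ by semicontinuity of orbit dimension. The two orbits remain distinct because the count is $\ge 2$. This contradicts minimality.

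\emph{Main obstacle.} The hardest part is making the line argument rigorous. One must ensure the chosen $\mathcal L$ lies in $\Phi_{Y}(Y)$ locally and satisfies the transversality hypothesis of Lemma~\ref{lem:affine-linelocalmodel} (genericity of the direction should handle this). One must also ensure the two orbits produced are genuinely distinct and both in $E$ along the whole path. I would first try to handle this by working in the fiber count function and using Lemma~\ref{lem:2-colorable-local-model}: each local model contributes at most one $E$-orbit per fiber, so two distinct local contributions force two distinct orbits.
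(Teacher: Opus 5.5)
\textbf{There is a genuine gap, and it sits exactly in the step you flagged as the main obstacle.}

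\textbf{First gap: the continuation step fails.} Lemma~\ref{lem:affine-linelocalmodel} only asserts that \emph{some} exceptional orbit of $Y_2$ lies over the line. Your $\mathcal L$ passes through $\alpha=\Phi_{Y_2}([1,0,0])$, and $[1,0,0]$ is itself exceptional. So the lemma is already satisfied by $\mathcal O_2$ and says nothing about $\mathcal L\setminus\{\alpha\}$.

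The claim you need is in fact false. By Lemma~\ref{lem:exceptional-orbits}, $\Phi_{Y_2}(\Sigma_{Y_2})$ is a finite union of closed cones with apex $\alpha$, and these cones need not contain your direction. For instance, take $T=(S^1)^2$, let $\mathcal O_2$ be a fixed point with weights $(1,0),(0,1),(-1,-1)$ (defining monomial $z_0z_1z_2$), and let $\mathcal O_1$ have finite stabilizer, so $\fh_1^\circ=\ft^*$.
\begin{itemize}
\item Both local models have moment image $\ft^*$, so this is consistent with Lemma~\ref{lem:local-cone}.
\item Your hypothesis $\mathcal L'\cap\fh_2^\circ=\{0\}$ holds.
\item The exceptional orbits of $Y_2$ lie only over the three rays $\alpha+\R_{\ge 0}\eta_j$, and a generic $\mathcal L'$ misses them.
\end{itemize}
By Lemma~\ref{lem:exceptional-saturated-new}, right after leaving $\alpha$ there are then no $E$-orbits near $\mathcal O_2$ over $\mathcal L$. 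So your set of good parameters need not be open. Genericity cannot repair this: the admissible directions are dictated by the partner's local model, and the partner changes along the path.

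\textbf{Second gap: the dimension count does not decrease.} Even if continuation worked, orbit dimension is \emph{lower} semicontinuous, because stabilizers of nearby points are subgroups of $H_2$. So the partner orbits along the path have dimension $\ge\dim\mathcal O_2$. In the transverse case it is strictly larger: by Lemma~\ref{lem:exceptional-orbits}, exceptional orbits of $Y_2$ off $Y_2^{H_2}$ have strictly smaller stabilizer dimension, and $Y_2^{H_2}$ maps into $\alpha+\fh_2^\circ$. A limit at $\alpha'$ is bounded only by the dimensions along the path, not by $\dim\mathcal O_2$. Hence $\dim\mathcal O_1+\dim\mathcal O_2$ need not drop. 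In the example, following the ray $\alpha+\R_{\ge 0}(1,0)$, nothing prevents reaching the other fixed point on that edge before $\partial\Delta_1$. That returns you to a pair of dimensions $2$ and $0$, so minimality is not contradicted.

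\textbf{How the paper avoids both problems.} It changes the potential. It minimizes $\langle\cdot,\xi\rangle$, for $\xi$ generic with respect to the subspaces $\fh_p+\fh_q$, over the compact set $N=\{\alpha:|\Phi^{-1}(\alpha)\cap E|\ge 2\}$, which is closed by Lemma~\ref{lem:uppersemi}. Take $\alpha\in N$ carrying an $E$-orbit $\mathcal O_p$ of dimension $\ge 2$ and a partner $\mathcal O_q$.
\begin{itemize}
\item If $\fh_p^\circ\cap\fh_q^\circ\neq 0$, it uses $\alpha+(\fh_p^\circ\cap\fh_q^\circ)$ directly.
\item Otherwise it takes an affine line in $(\alpha+\fh_p^\circ)\cap\{\langle\cdot,\xi\rangle=\langle\alpha,\xi\rangle-\epsilon\}$. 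Such a line exists because $\dim\fh_p^\circ\ge 2$, and it does \emph{not} pass through $\alpha$. So Lemma~\ref{lem:affine-linelocalmodel} yields a genuine $\beta\neq\alpha$ that is exceptional for $Y_q$.
\end{itemize}
The conic structure then puts $[\alpha,\beta]$ in both exceptional images. Lemma~\ref{lem:exceptional-saturated-new} gives a point of $N$ with smaller $\xi$-value. Only one local step is needed, the descent direction is supplied by $Y_q$ rather than chosen in advance, and no orbit dimensions are tracked.
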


\begin{proof}
We will prove the contrapostive.
Assume that $\Phi$ is not injective on $E$. Then $N = \{\alpha \in \Phi(M): |\Phi^{-1}(\alpha) \cap E| \geq 2\}$ is nonempty. By Lemma~\ref{lem:uppersemi}, $N$ is closed.

Pick any direction $\xi \in \mathfrak{t}$ 
which is generic in the following sense:
for any two points $p,q\in M$ such that $\Phi(p) = \Phi(q)$, if $\mathfrak{h}_p + \mathfrak{h}_q \neq \ft$ then $\xi\notin \mathfrak{h}_p + \mathfrak{h}_q$,
where $\mathfrak{h}_p$ and $\mathfrak{h}_q$ denote the Lie algebras of the stabilizer groups of $p$ and $q$.
Since $\Phi(M)$ is compact and $N \subset \Phi(M)$ is closed, $\langle \cdot, \xi \rangle$ attains its minimum in $N$ at some point $\alpha_{\min} \in N$.

Fix $\alpha \in N$ such that $\Phi^{-1}(\alpha)$ contains an exceptional orbit of dimension at least $2$. We will show that $\alpha \neq \alpha_{\min}$ by finding some $\alpha' \in N$ such that $\langle \alpha',\xi\rangle < \langle  \alpha, \xi \rangle$. This will imply that $|\Phi^{-1}(\alpha_{\min}) \cap \Gamma| = |\Phi^{-1}(\alpha_{\min}) \cap E| \geq 2$, and hence prove the claim.

By definition of $N$ and our assumption on $\alpha$, there exist distinct orbits $\mathcal O_p, \mathcal O_q \in \Phi^{-1}(\alpha) \cap E$ such that $\dim \mathcal O_p \geq 2$. 
Let $Y_p = T \times_{H_p}  \mathfrak{h}_p^{\circ} \times \C^{h_p+1},Y_q = T \times_{H_q}  \mathfrak{h}_q^{\circ} \times  \C^{h_q+1}$ be the local models associated to $p,q$ respectively with moment maps $\Phi_p,\Phi_q$, where $h_p = \dim H_p, h_q =\dim H_q$. Let $\Sigma_p \subset Y_p / T, \Sigma_q \subset Y_q/ T$ be the set of exceptional orbits in the local models of $p$ and $q$, respectively.
By Lemma~\ref{lem:exceptional-orbits}, since $\mathcal{O}_p$ is exceptional, the orbit $\mathcal{O}_{[1,\nu,0]} \in Y_p/T$ is exceptional for any $\nu\in\mathfrak{h}_p^\circ$. 
Hence, $\alpha+ \mathfrak{h}_p^{\circ} \subset \Phi_p(\Sigma_p)$, and 
similarly $\alpha+\mathfrak{h}_q^{\circ} \subset \Phi_q(\Sigma_q)$.

For $\epsilon >0$, we consider the hyperplane defined by 
$$\mathcal{H} =\{\eta \in \mathfrak{t}^*: \langle \eta, \xi \rangle = \langle \alpha, \xi \rangle - \epsilon\}.$$
First we will show that  $\Phi_p(\Sigma_p) \cap \Phi_q(\Sigma_q) \cap \mathcal{H} \neq \emptyset$.
There are two cases to consider.

If $\dim(\mathfrak{h}_p^{\circ} \cap \mathfrak{h}_q^{\circ})\geq 1$, then 
$\alpha+(\mathfrak{h}_p^{\circ} \cap \mathfrak{h}_q^{\circ})$ intersects $\mathcal{H}$ since $\xi$ is generic.
Since all points of $\alpha+(\mathfrak{h}_p^{\circ} \cap \mathfrak{h}_q^{\circ})$ are exceptional in both local models, we conclude that 
$\Phi_p(\Sigma_p) \cap \Phi_q(\Sigma_q) \cap \mathcal{H} \neq \emptyset$.

Otherwise, $\mathfrak{h}_p^{\circ} \cap \mathfrak{h}_q^{\circ} = \{0\}$. Since $\dim(\mathfrak{h}_p^{\circ})= \dim \mathcal{O}_p \geq 2$ and since $\xi$ is generic, there exists an affine line $\mathcal{L} \subset (\alpha+ \mathfrak{h}_p^{\circ} )\cap  \mathcal{H} $. 
By Lemma~\ref{lem:local-cone}, $\Phi_p(Y_p) = \Phi_q(Y_q)$. Hence, $\mathcal{L} \subset \Phi_q(Y_q)$ and $\mathcal{L}' \cap \mathfrak{h}_q^{\circ} = \{0\}$, where $\mathcal L'$ is the line through the origin which is parallel to $\mathcal L$. By Lemma~\ref{lem:affine-linelocalmodel}, we conclude that $\mathcal{L}\cap \Phi_q(\Sigma_q)\neq \emptyset$, and thus $\Phi_p(\Sigma_p) \cap \Phi_q(\Sigma_q) \cap \mathcal{H} \neq \emptyset$.

Let $\beta \in \Phi_p(\Sigma_p) \cap \Phi_q(\Sigma_q) \cap \mathcal{H}$. Then $\Phi_p^{-1}(\beta )\subset Y_p$ and $\Phi_q^{-1}(\beta)\subset Y_q$ both include exceptional orbits. By Lemma~\ref{lem:exceptional-orbits}, the interval $[\alpha,\beta]$ is a subset of $\Phi_p(\Sigma_p) \cap \Phi_q(\Sigma_q)$. 
By Lemma~\ref{lem:exceptional-saturated-new}, there exist neighborhoods $U\subset \Phi(M)$ of $\alpha$, $V_p \subset M/T$ of $\mathcal{O}_p$ and $V_q \subset M/T$ of $\mathcal{O}_q$ such that $E \cap \Phi^{-1}(U) \cap V_p$ is isomorphic to $\Sigma_p \cap \Phi_p^{-1}(U)$ and $E \cap \Phi^{-1}(U) \cap V_q$ is isomorphic to $\Sigma_q \cap \Phi_q^{-1}(U)$. 
Pick any $\alpha' \in U \cap (\alpha,\beta]$. Then $\Phi_p^{-1}(\alpha') \cap \Sigma_p \neq \emptyset$ and $\Phi_q^{-1}(\alpha') \cap \Sigma_q \neq \emptyset$. Hence, $\alpha' \in N$ and $\langle \alpha', \xi \rangle<\langle \alpha, \xi \rangle$, so $\alpha \neq \alpha_{\min}$.
\end{proof}

\begin{lemma}\label{lem:connectedness}
    Let $(M,\omega, \Phi)$ be a tall complexity one $T$-space with proper moment map. Let $E$ be a connected component of the skeleton of $M$ and let $\Gamma= \{\mathcal{O} \in E: \operatorname{dim} (\mathcal{O})\leq 1\}$. Then $\Gamma$ is connected.
\end{lemma}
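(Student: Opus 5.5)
We will show that $\Gamma$ is connected by proving that any two points of $\Gamma$ lie in a common connected subset of $\Gamma$. The proof combines the local structure of Lemma~\ref{lem:2-colorable-local-model} with the toric structure of Corollary~\ref{cor:exceptional-component}.

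\emph{Step 1: local connectedness of $\Gamma$ near a point of $E$.} Fix $\mathcal{O}\in E$ with stabilizer $H$, and let $Y=T\times_H\fh^\circ\times\C^{h+1}$ be its local model. By Lemma~\ref{lem:exceptional-orbits}, $\Sigma_Y$ is a union of cones, indexed by subsets $S\subset\{0,\dots,h\}$ with $\sum_{k\in S}\xi_k>1$. The cone for $S$ is
\[ \{[t,\nu,z]: z_k=0 \text{ for } k\in S\}. \]
Its one-skeleton part consists of the orbits with $\dim\fh^\circ+\#\{k\notin S\text{ relevant}\}\le 1$ after accounting for stabilizers. This part is a union of rays emanating from the minimal stratum. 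I will show that if $\Gamma\cap V\neq\emptyset$ near $\mathcal{O}$, then every point of $\Sigma_Y$ near $\mathcal{O}$ can be joined inside $\Sigma_Y$ to a point of $\Gamma$ by moving within a single cone while decreasing the orbit dimension. The orbit dimension can be decreased by setting $\nu\to$ a boundary point, or by setting more $z_k$ to $0$ in the face structure.

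\emph{Step 2: reduce connectivity of $E$ to the toric components.} By Corollary~\ref{cor:exceptional-component}, every exceptional orbit $\mathcal{O}$ with stabilizer $H$ lies in a connected component $N$ of $M^H$. This $N$ is a symplectic toric $(T/H)$-manifold consisting entirely of exceptional points, so $N/T\subset E$. Since $N/T$ is homeomorphic to the moment polytope $\Phi(N)$, it is connected. Its points of orbit dimension $\le1$ are the preimages of the edges and vertices of $\Phi(N)$, and the one-skeleton of a polytope is connected. Hence $\Gamma\cap N/T$ is connected and nonempty; it is nonempty because polytopes have vertices, by compactness or properness of the faces. Thus each exceptional orbit lies in a connected set $N/T\subset E$ that meets $\Gamma$ in a connected set.

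\emph{Step 3: chain argument.} Given $\gamma_0,\gamma_1\in\Gamma$, connectedness of $E$ gives a finite chain $N_1/T,\dots,N_m/T$ of such toric pieces, with consecutive pieces intersecting, joining $\gamma_0$ to $\gamma_1$. To obtain the chain, cover $E$ by the open sets $\bigcup_{N}$ of pieces passing near each point, using Step 1 to see that the union of pieces through a neighborhood is open in $E$; then use the standard chain lemma for connected spaces. The key issue is that when $N_i/T\cap N_{i+1}/T\neq\emptyset$, we need $\Gamma\cap N_i/T$ and $\Gamma\cap N_{i+1}/T$ to be joinable within $\Gamma$. The intersection is a union of sets $N'/T$ for components $N'$ of $M^{H'}$ with larger stabilizer $H'\supset H_i H_{i+1}$. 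It is therefore itself a toric piece, which is a face of both polytopes $\Phi(N_i)$ and $\Phi(N_{i+1})$. Its one-skeleton is contained in both one-skeleta and is nonempty, so the two connected sets $\Gamma\cap N_i/T$ and $\Gamma\cap N_{i+1}/T$ share a point. Concatenating gives a connected subset of $\Gamma$ containing $\gamma_0$ and $\gamma_1$.

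The main obstacle is in Step 3. One must verify that the intersection of two toric pieces is a union of toric pieces and, in particular, nonempty in its one-skeleton. This requires that faces of $\Phi(N_i)$ correspond to fixed components of larger stabilizers, and that nonempty faces of a proper polytope contain vertices. For the noncompact proper case, I would first try to handle unbounded faces via properness: a face with no vertex would contain a full line, contradicting that $\Phi(M)$ lies in a convex set whose pieces are pointed by the local cone structure of Lemma~\ref{lem:local-cone}. If this fails, I would instead choose a rank-one subface of dimension $\le1$ directly.
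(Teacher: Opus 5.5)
In the compact case your argument is the paper's. You decompose $E$ into the toric pieces $N/T$ of Corollary~\ref{cor:exceptional-component}, use that the edge graph of each polytope $\Phi(N)$ is connected, and pass from one piece to the next through a $T$-fixed point in their intersection. The paper does the last step by surgering a path in $E$, while you use a chain of pieces. Your remark that $N_i\cap N_{i+1}$ is a union of components of $M^{H_iH_{i+1}}$ is a real contribution: each such component is a compact Hamiltonian $T$-manifold and so contains a fixed point. This justifies the paper's unexplained choice of an exceptional fixed point $p_1\in N_1\cap N_i$.

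Step 1 does not prove what you invoke it for, and you do not need it.
\begin{itemize}
\item Near an orbit of dimension $\ge 2$ there are no orbits of dimension $\le 1$ at all, since nearby stabilizers are subgroups of the given one.
\item $\nu$ ranges over the vector space $\fh^\circ$, so there is no boundary to move it to.
\item Nothing in Step 1 shows that the union of pieces through a point is a neighborhood of that point in $E$.
\end{itemize}
Use finiteness instead. For compact $M$ there are finitely many pieces, each closed in $M/T$. If their intersection graph were disconnected, $E$ would be a disjoint union of two nonempty closed sets. This gives your chain directly. It is also exactly what is needed to justify the paper's assertion that $E$ is path-connected.

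Drop the non-compact fallback: it cannot be repaired, because the statement fails for proper moment maps. Let $M_0$ be the compact tall complexity one $T^2$-space obtained by restricting the toric $T^3$-action with polytope $\{(\alpha,x)\in[0,2]^2\times\R : \max(0,\alpha_1-1)\le x\le 2\}$. Its skeleton $E_0$ is the $2$-sphere over the edge $\{\alpha_1=1,\ x=0\}$, joining two exceptional fixed points. Now let $T^2\times S^1$ act on $M_0\times T^*S^1$.
\begin{itemize}
\item The moment map is proper.
\item The skeleton is $E_0\times\R$.
\item Its orbits of dimension $\le 1$ are exactly those over the two poles, which form two disjoint lines.
\end{itemize}
Pieces need not have vertices, and the moment image can contain lines, so your "pointedness" argument fails. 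Both your proof and the paper's genuinely require compactness, which is the only case used in Theorem~\ref{thm:k-colorability}.
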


\begin{proof}
    By Corollary~\ref{cor:exceptional-component}, $E = \cup (N_i/T)$ where each $N_i$ is a connected component of $M^{H_i}$ for some subgroup $H_i \subseteq T$. Hence, $E$ is path-connected. Let $p,q \in E$ be exceptional fixed points, then there exists a path $\gamma: [0,1] \to E$ such that $\gamma(0)=p, \gamma(1)=q$. We show that we can change $\gamma$ to make it lie in $\Gamma$. 
    
    First assume that $p,q \in N_i$. Since by Corollary~\ref{cor:exceptional-component} $N_i$ is a symplectic toric $(T/H_i)$-manifold, $N_i/T \cong \Phi(N_i)$ is a Delzant polytope and $\Phi(p),\Phi(q)$ are its vertices. We can connect $\Phi(p),\Phi(q)$ by a path consisting of edges of $\Phi(N_i)$, which corresponds to a path consisting of orbits of dimensions at most one (c.f. Delzant~\cite{De88}). Hence, $p,q$ can be connected by a path in $\Gamma$.

    Now suppose that $p,q$ are not in the same $N_i$ for any $i$. Without loss of generality, assume that $p \in N_1$. Then $\gamma$ intersects more than one $N_i/T$. Let $t_0 >0$ be the smallest number such that $\gamma(t_0) \in N_1 \cap N_i$ for some $i \neq 1$. Let $p_1 \in N_1 \cap N_i$ be an exceptional fixed point different from $p$. By the argument above, there exists a path in $\Gamma$ connecting $p$ with $p_1$. Concatenate this path with a path in $N_i/T$ starting at $p_1$ and ending at $\gamma(t_0)$ and further with the path $\gamma|_{[t_0,1]}$, we get a new path $\gamma_1$ from $p$ to $q$ such that $\gamma(t) \in \Gamma$ for $0 \leq t \leq \frac{1}{3}$. Inductively, we can modify the path to get a path $\gamma_0:[0,1] \to \Gamma$ such that $\gamma_0(0)=p$ and $\gamma_0(1)=q$.

    Finally, by Corollary~\ref{cor:exceptional-component}, every orbit of dimension one in $\Gamma$ can be connected to an exceptional fixed point, so $\Gamma$ is path-connected.
\end{proof}

Now, we are ready to prove Theorem~\ref{thm:k-colorability}.
\begin{proof}[Proof of Theorem~\ref{thm:k-colorability}]
Let $\Sigma$ be the skeleton of $(M,\omega,\Phi)$ and $\Gamma$ be the one-skeleton of $(M,\omega,\Phi)$. It is clear that if $\Sigma$ is $k$-colorable, then $\Gamma$ is $k$-colorable. It remains to prove the converse.

Suppose that $\Gamma = \Gamma_1 \sqcup \ldots\sqcup\Gamma_k$ for clopen subsets $\Gamma_1,\ldots,\Gamma_k \subseteq \Gamma$ such that the orbital moment map $\Phi: M/T \to \ft^*$ is injective on each $\Gamma_i$. Let $E_i$ be the smallest clopen (in the subspace topology on $\Sigma$) subsets of $\Sigma$ that contains $\Gamma_i$. We claim that $\Sigma = E_1 \sqcup \ldots \sqcup E_k$.

Fix an exceptional orbit $\mathcal{O}_p \in \Sigma$. Let $H$ be the stabilizer group of $p$ and let $N$ be the connected component of $M^H$ that contains $p$. By Corollary~\ref{cor:exceptional-component},  every point in $N$ is an exceptional point. In particular, the fixed points in $N$ are exceptional, so $(N/T) \cap \Gamma_i \neq \emptyset$ for some $i$. Since $N$ is connected, $\mathcal{O}_p\in N/T \subset E_i$. This implies that $\Sigma =E_1 \cup \ldots \cup E_k$.

Next, we show that $E_i \cap E_j = \emptyset$ whenever $i \neq j$. Suppose $E_i \cap E_j \neq \emptyset$ for some $i \neq j$. Then there exists a connected component $\mathcal{C}$ of $\Sigma$ such that $\mathcal{C} \subseteq E_i \cap E_j$. By Lemma~\ref{lem:connectedness}, $\mathcal{C} \cap \Gamma$ is connected, so $\mathcal{C} \cap \Gamma$ is contained in $\Gamma_s$ for a unique $s \in \{1,\ldots,k\}$. Then either $\Gamma_i \subset E_i \setminus \mathcal{C} \subsetneq E_i$ or $\Gamma_j \subset E_j \setminus \mathcal{C} \subsetneq E_j$, but this contradicts the minimality of $E_i$,$E_j$. Hence, $E_i \cap E_j = \emptyset$ whenever $i \neq j$. Therefore, $\Sigma = E_1 \sqcup \ldots \sqcup E_k$.

Now notice that $\Gamma_i = E_i \cap \Gamma$. Since $\Phi$ is injective on each $\Gamma_i$, Proposition~\ref{prop:injectivity} implies that $\Phi$ is injective on each $E_i$. We conclude that $\Sigma$ is $k$-colorable. 
\end{proof}

\section{Complexity one properties of a symplectic toric manifold}\label{sec:cx-1-toric}
In this section, we focus on a special family of complexity one spaces, the ones that are obtained from restricting a toric action on a symplectic toric manifold to a codimension one sub-torus action. We start with a few definitions.

Let $\ft$ be the Lie algebra of a compact torus $T$ and $\ell \subset \ft$ be its integral lattice. A subset $P$ of $\ft^*$ is {\bf polyhedral} if it is the intersection of finitely many closed half-spaces.
A polyhedral set is {\bf rational} if each half-space admits an outwards pointing normal vector which is primitive in $\ell$.
Additionally, a rational  polyhedral set $P$ is {\bf unimodular} if for each point $\alpha\in P$ the primitive outward normal vectors of the half-spaces that $\alpha$ lies on the boundary of can be extended to a $\Z$-basis of $\ell$. 
$P$ is  {\bf locally unimodular} if for any point $\alpha \in P$ there is a neighborhood $U_\alpha$ of $\alpha$ in $\ft^*$ and a unimodular rational polyhedral set $D_\alpha$ such that $U_\alpha \cap P = U_\alpha \cap D_\alpha$.  In particular, every open set is locally unimodular. Finally, a {\bf Delzant polyhedral set} is a unimodular, rational polyhedral set and a {\bf Delzant polytope} is a compact Delzant polyhedral set. Delzant~\cite{De88} classified compact connected symplectic toric manifolds by their moment images.

\begin{theorem}[\cite{De88}]\label{thm:Delzant}
There is a one-to-one correspondence.
\[\{\text{compact, connceted symplectic toric $T$-manifolds}\}/_{\sim} \xlongleftrightarrow{1-1} \{\text{Delzant polytopes in $\ft^*$}\}/_{\sim}\]
The equivalence relation is given by equivariant symplectomorhisms that intertwine the moment maps on the left and by translations on the right.
\end{theorem}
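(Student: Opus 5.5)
The statement is Delzant's classification, which the paper cites from \cite{De88} rather than proving. I would nonetheless follow Delzant's original strategy and organize it around three claims. First, the map is well defined. Second, it is injective: two toric manifolds with the same polytope are equivariantly symplectomorphic. Third, it is surjective: every Delzant polytope arises as a moment image.

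For well-definedness, I would use the Atiyah--Guillemin--Sternberg convexity theorem. It says $\Phi(M)$ is the convex hull of the images of the fixed points, hence a compact polytope. At a fixed point $p$, Theorem~\ref{thm:lnf} with $H=T$ and $n=\dim T$ gives the local model $\C^n$. Effectiveness of the action forces the weights $\eta_1,\dots,\eta_n$ to form a $\Z$-basis of the weight lattice. Then Lemma~\ref{lem:local-cone} shows that near $\Phi(p)$ the image is the cone $\Phi(p)+\sum \R_{\geq0}\eta_i$, which is unimodular. At a point with stabilizer $H$, the model $T\times_H\fh^\circ\times\C^{h}$ similarly shows that the image is locally a product of $\fh^\circ$ with a unimodular cone. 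So $\Phi(M)$ is a Delzant polytope. Translating $\Phi$ by a constant translates the image, which explains the equivalence relation on the right.

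For surjectivity, I would use Delzant's construction by symplectic reduction. Write the polytope as $\Delta=\{x:\langle x,u_i\rangle\le\lambda_i,\ i=1,\dots,d\}$ with primitive outward normals $u_i$. Define $\pi\colon\R^d\to\ft$ by $e_i\mapsto u_i$, and let $K=\ker(T^d\to T)$. Reduce $\C^d$ by $K$ at the level determined by $\lambda$. The key check is that $K$ acts freely on the level set. This follows from unimodularity: at a point of the level set, the normals $u_i$ with $z_i=0$ correspond to facets meeting at a face, so they extend to a $\Z$-basis. The residual $T^d/K\cong T$ action then gives a compact toric manifold whose image is $\Delta$.

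Injectivity is the main obstacle. Local uniqueness follows from Theorem~\ref{thm:lnf}: over a small open set $U\subset\Delta$, any two toric manifolds with image $\Delta$ are isomorphic to the same local model. The difficulty is gluing these local isomorphisms. The automorphisms of $\Phi^{-1}(U)$ commuting with $T$ and $\Phi$ are given by $[t,\nu,z]\mapsto[t\cdot g(\Phi),\nu,z]$ with $g=\exp(d f)$ for a smooth function $f$ on $U$. This identifies the sheaf of automorphisms with $C^\infty/(\text{locally affine-integral functions})$. The obstruction to gluing lies in $H^1(\Delta,\mathcal{C}^\infty)\oplus H^2(\Delta;\ell\times\R)$. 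The first group vanishes because $\mathcal{C}^\infty$ is fine, and the second because $\Delta$ is contractible. Proving smoothness of $f$ at the boundary strata, via a Schwarz-type theorem for invariant functions on the local models, is the technical heart. I would handle it as Lerman--Tolman or Karshon--Lerman do.
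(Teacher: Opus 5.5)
The paper does not prove this statement. It quotes it from \cite{De88} and uses it as a black box, for instance in the proof of Theorem~\ref{thm:toric}, so there is no internal argument to compare yours against. As a sketch, your outline is the standard proof and is sound.
\begin{itemize}
\item \emph{Image.} Convexity plus the local normal form at fixed points gives a Delzant image. Unimodularity at vertices already implies it at every point, so your analysis at non-fixed points is optional.
\item \emph{Existence.} Delzant's reduction of $\C^d$ by $K=\ker(T^d\to T)$ gives existence. Freeness comes from unimodularity, and $T^d/K\cong T$ comes from the normals at a vertex spanning $\ell$.
\item \emph{Uniqueness.} This is the Lerman--Tolman/Karshon--Lerman gluing argument rather than anything specific to Delzant. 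It is exactly the mechanism the paper uses in proving Theorem~\ref{thm:inj-toric-alt}. There, local pieces are matched up to integral affine functions, the discrepancies form a \v{C}ech cocycle in $\ell\times\R$, and contractibility of $\Delta$ kills it.
\end{itemize}

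A few points to tighten.
\begin{itemize}
\item The obstruction does not lie in a direct sum. The sheaf sequence $0\to\ell\times\R\to C^\infty\to\mathcal{A}\to 0$ gives the exact segment $H^1(\Delta,C^\infty)\to H^1(\Delta,\mathcal{A})\to H^2(\Delta;\ell\times\R)$. Vanishing of the two outer terms forces $H^1(\Delta,\mathcal{A})=0$, which is what you need.
\item For local uniqueness you also need each fiber of $\Phi$ to be a single orbit; this follows from connectedness of fibers in the convexity theorem. Only then does $\Phi^{-1}(U)$ lie inside the normal-form neighborhood for small $U$.
\item The real content is exactness on the right of that sequence. You must show every $T$-equivariant, $\Phi$-preserving symplectomorphism over a small $U$ has the form $m\mapsto\exp(df(\Phi(m)))\cdot m$ with $f$ smooth up to the boundary strata. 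You defer this to the literature, which is reasonable given that the paper defers the whole theorem.
\item As literally stated, the correspondence is not injective unless the left-hand equivalence also allows shifting $\Phi$ by a constant. Otherwise $(M,\omega,\Phi)$ and $(M,\omega,\Phi+c)$ are inequivalent but have translate-equivalent images. Your remark about translating $\Phi$ is where this is absorbed; say so explicitly.
\end{itemize}
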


In non-compact case, the moment image is locally unimodular instead of unimodular, see~\cite[Theorem 4.3]{LMTW} and~ \cite[Section 6]{KL}. 
Karshon and Lerman~\cite{KL} classified non-compact symplectic toric manifolds. We recall part of their classification results and refer the readers to~\cite[Theorem 1.3]{KL} for the complete story. To state it, we need the following definition from \cite[Definition A.16]{KL}.

\begin{definition}
Let a compact torus $T$ act on a manifold $M$. We say that a smooth map $\pi$ from
$M$ to a manifold with corners $W$ is a $\mathbf{T}${\bf-quotient map} if for every $T$-invariant smooth function $f \colon  M \to Y$
there exists a unique smooth map $\overline{f} \colon W \to Y$  such that $f = \overline{f} \circ \pi$ and, conversely, if for every smooth map $\overline{f} \colon W \to Y$ the pull-back $f := \overline{f} \circ \pi$ is a  $T$-invariant smooth function.\footnote{In \cite{KL}, the authors inadvertently omit the assumption that $\overline{f} \circ \pi$ is $T$-invariant from~\cite[Definition A.16]{KL}, but this is implied by their assertion that  $\pi$ identifies $W$ with $M/T.$}
Such a map induces a homeomorphism from the space $M/ T$ of $T$-orbits to $W$.
\end{definition}

\begin{theorem}[\cite{KL}]\label{thm:KL}
    Let $\Delta \subseteq  \ft^*$ be a locally unimodular set.
There exists a symplectic toric $T$-manifold $(M, \omega, \Phi)$
such that $\Phi(M) = \Delta$ and $\Phi \colon  M \to \Delta$ is a $T$-quotient map.
Moreover, if $\Delta$ is convex then $( M, \omega, \Phi)$ is unique up to isomorphisms  ($T$-equivariant symplectomorphisms that intertwine the moment maps).
\end{theorem}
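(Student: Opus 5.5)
The statement is cited from Karshon and Lerman, so here I only sketch how one would prove it. The idea is to build $M$ by gluing local toric models over $\Delta$ and to control the gluing by sheaf cohomology on $\Delta$.

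\emph{Local models.} Fix $\alpha\in\Delta$. By local unimodularity there is a neighborhood $U_\alpha$ and a unimodular rational polyhedral set $D_\alpha$ with $U_\alpha\cap\Delta=U_\alpha\cap D_\alpha$. Say the primitive outward normals at $\alpha$ are $v_1,\ldots,v_k$, extended to a $\Z$-basis of $\ell$. After an affine change of coordinates in $\GL(\ell)$, $D_\alpha$ near $\alpha$ becomes $\R_{\ge 0}^k\times\R^{n-k}$. The model over it is
\[
\C^k\times T^*(T^{n-k})=\C^k\times (S^1)^{n-k}\times\R^{n-k},
\]
with the standard Hamiltonian $T$-action and moment map $(z,\theta,x)\mapsto(\tfrac12|z_1|^2,\ldots,\tfrac12|z_k|^2,x)$, shifted so that the origin maps to $\alpha$. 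Restricting to the preimage of $U_\alpha$ gives a symplectic toric manifold $M_\alpha$ with image $U_\alpha\cap\Delta$.

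\emph{Quotient map.} To see that $\Phi_\alpha\colon M_\alpha\to U_\alpha\cap\Delta$ is a $T$-quotient map, apply Schwarz's theorem coordinatewise. It says that a smooth $(S^1)^k$-invariant function on $\C^k$ is a smooth function of $|z_1|^2,\ldots,|z_k|^2$ on the corner $\R^k_{\ge0}$. On the free factor $T^*(T^{n-k})$ the statement is immediate.

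\emph{Gluing.} Cover $\Delta$ by such $U_\alpha$. Next one shows that on any open $U\subset\Delta$ over which a local model lives, the moment-map-preserving equivariant symplectic automorphisms form a sheaf $\mathcal A$. I expect this sheaf to fit into an exact sequence
\[
0\to \ell\times\R\to C^\infty\to\mathcal A\to 0 .
\]
Here a smooth function $f$ on $U$, smooth in the sense of manifolds with corners, acts by the time-one flow of the invariant Hamiltonian $f\circ\Phi$, that is, by $\exp(df)$ along the orbits. The locally constant sheaf $\ell\times\R$ is the kernel: integral-lattice constants together with additive constants in $f$. Any two local models over the same small convex set are isomorphic, again by checking in the standard coordinates. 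Hence toric manifolds over $\Delta$ form a gerbe-like object whose obstructions and classification live in $H^1(\Delta,\mathcal A)$. Because $C^\infty$ is fine (partitions of unity exist on manifolds with corners), the exact sequence gives $H^1(\Delta,\mathcal A)\cong H^2(\Delta;\ell\times\R)$. Existence follows by choosing the trivial class, i.e.\ gluing with identity-type transitions. One also checks that the glued space is Hausdorff and that the $T$-quotient property, being local on $\Delta$, persists.

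\emph{Uniqueness.} If $\Delta$ is convex it is contractible, so $H^2(\Delta;\ell\times\R)=0$ and all such manifolds are isomorphic.

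The main obstacle is the middle step: identifying the automorphism sheaf precisely and proving surjectivity of $C^\infty\to\mathcal A$ near corner points. This requires showing that every equivariant symplectomorphism of a local model that preserves the moment map is the flow of some $f\circ\Phi$. I would first do this on the free part, where such maps are $T$-valued functions whose graphs are Lagrangian, hence closed $1$-forms, hence locally exact. I would then use the smoothness provided by Schwarz's theorem to extend across the corner strata.
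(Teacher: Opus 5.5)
The paper does not prove this statement; it quotes it from \cite{KL}, where it is a special case of the classification of symplectic toric manifolds over a unimodular local embedding (here, the inclusion $\Delta\hookrightarrow\ft^*$). Your outline follows essentially the Karshon--Lerman route:
\begin{itemize}
\item standard local models;
\item Schwarz's theorem for the quotient-map property;
\item the automorphism sheaf $\mathcal A\cong C^\infty/(\ell\times\R)$, acting by time-one flows of $f\circ\Phi$;
\item the isomorphism $H^1(\Delta,\mathcal A)\cong H^2(\Delta;\ell\times\R)$, which vanishes for convex $\Delta$.
\end{itemize}

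\textbf{Existence.} This is the one step whose stated justification is wrong. Locally isomorphic objects with abelian automorphism sheaf $\mathcal A$ are \emph{classified} by $H^1(\Delta,\mathcal A)$, but only once one global object is given. The \emph{obstruction} to the existence of a global object lies in $H^2(\Delta,\mathcal A)\cong H^3(\Delta;\ell\times\R)$. This group need not vanish: take $\dim T=4$ and $\Delta=\ft^*\setminus\{0\}$. So ``choosing the trivial class'' has no meaning before a base object exists.

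What actually works is the gluing you mention parenthetically, made precise as follows.
\begin{enumerate}
\item Identify the free part of each standard model over $U_\alpha$ with $T\times(U_\alpha\cap\mathrm{int}\,\Delta)$, using the Lagrangian section of real positive points.
\item With these identifications, every transition map is the identity on $T\times\mathrm{int}\,\Delta$. Hence the cocycle condition holds on the nose.
\item Check that the transitions extend smoothly over non-free orbits. They do: changing the adapted $\Z$-basis multiplies a coordinate $z_j$ that can vanish near a point only by angle factors of coordinates that do not vanish there.
\end{enumerate}

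\textbf{Uniqueness.} You must also show that an \emph{arbitrary} $(M,\omega,\Phi)$, with $\Phi\colon M\to\Delta$ a $T$-quotient map, is isomorphic to your standard model over small open sets. Comparing standard models with one another does not give this. It follows instead from the local normal form (Theorem~\ref{thm:lnf}) together with the homeomorphism $M/T\cong\Delta$.
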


The moment image of a tall complexity one space is also locally unimodular.
\begin{lemma}[Lemma 7.3 in~\cite{KT14}]\label{lem:cx-1-image-Delzant}
    Let $(M,\omega, \Phi)$ be a tall complexity one $T$-space with proper moment map. Then $\Phi(M)$ is a Delzant polyhedral set.
\end{lemma}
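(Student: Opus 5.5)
The statement to prove is Lemma~\ref{lem:cx-1-image-Delzant}: the moment image of a tall complexity one space with proper moment map is a Delzant polyhedral set. The plan has two parts. First, show that $\Phi(M)$ is \emph{locally} unimodular using the local normal form. Second, upgrade this to a global unimodular rational polyhedral set using convexity.

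\textbf{Local step.} Fix $\alpha \in \Phi(M)$ and a point $p \in \Phi^{-1}(\alpha)$. By Lemma~\ref{lem:local-cone}, near $\alpha$ the image $\Phi(M)$ agrees with $\Phi_p(Y_p) = \alpha + \fh^\circ + \sum_{j=0}^{h} \R_{\geq 0}\eta_j$, where $Y_p = T\times_H \fh^\circ\times\C^{h+1}$.

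By Lemma~\ref{lem:defining-monomial}, the weights $\eta_0,\ldots,\eta_h$ span $\fh^*$, since $h+1$ vectors with a one-dimensional relation space span an $h$-dimensional space. Tallness should force the defining monomial to have at least two nonzero exponents. If only one exponent $\xi_i$ were nonzero, then $\eta_i=0$; the reduced space at $\alpha$ would then be a single point, so $p$ would not be tall. Split into two cases:
\begin{enumerate}
\item If the relation vector $\xi$ has at least two positive entries, I expect the cone $\sum_j \R_{\geq 0}\eta_j$ to be all of $\fh^*$, or a cone cut out by the facets coming from indices with $\xi_j = 0$.
\item Concretely, the indices with $\xi_j=0$ give linearly independent weights (by Lemma~\ref{lem:defining-monomial}), and the indices with $\xi_j>0$ give a subspace contribution, since $\sum \xi_j\eta_j=0$ lets one move in both directions.
\end{enumerate}
So the local cone is, after translation, $\fh^\circ \times$ (a linear subspace) $\times$ (a simplicial cone on the weights $\eta_j$ with $\xi_j=0$).

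Unimodularity of this cone should follow from exactness of \eqref{ses}. The map $H \to (S^1)^{h+1}$ is injective with cokernel $S^1$ via $P$. Restricting to the coordinates with $\xi_j=0$, the map $H\to (S^1)^{\{j:\xi_j=0\}}$ is surjective: any element of that subtorus extends by the identity on the other coordinates to an element of $\ker P=\rho(H)$. Surjectivity at the group level means the corresponding weights form part of a $\Z$-basis of the weight lattice of $H$. Combining this with the splitting $T \to T/H$ shows that the primitive outward normals of the facets through $\alpha$ extend to a $\Z$-basis of $\ell$. This gives local unimodularity and local rationality.

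\textbf{Global step.} Connectedness and properness give convexity of $\Phi(M)$ by the Sjamaar / Lerman–Meinrenken–Tolman–Woodward convexity theorem cited before Lemma~\ref{lem:local-cone}. A closed convex set that is locally polyhedral has only finitely many facets when it is compact (the case here); in the proper case one shows the facet set is locally finite and that, by convexity, a set which is locally rational polyhedral is the intersection of the half-spaces of its facets. Each facet normal is primitive rational by the local step, and unimodularity at each point is the local statement.

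The main obstacle is the lattice-level unimodularity in the local step, namely translating exactness of \eqref{ses} into a $\Z$-basis statement in $\ell$ rather than merely in $\fh^*$. I would handle it by writing $T\cong H^0\times T'$ up to finite cover and checking that the finite part of $H$ is absorbed by the surjectivity onto the $\xi_j=0$ coordinates.
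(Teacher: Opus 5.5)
The paper gives no proof of this lemma (it is quoted from \cite{KT14}), so your argument has to stand on its own. Your description of the local cone is right. Set $S=\{j:\xi_j>0\}$, $Z=\{j:\xi_j=0\}$ and $V_S=\operatorname{span}\{\eta_j\}_{j\in S}$. Lemma~\ref{lem:defining-monomial} makes the $\eta_j$, $j\in Z$, independent modulo $V_S$, so the image near $\alpha$ is $\alpha+\fh^\circ+V_S+\sum_{j\in Z}\R_{\ge 0}\eta_j$.

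\textbf{The gap: the unimodularity step.} This step is the whole content of ``Delzant''. You extract that $\{\eta_j\}_{j\in Z}$ is part of a $\Z$-basis of the weight lattice of $H$, but this does not control the facet normals. The primitive inward normal $Y_k$ of the $k$-th facet must also annihilate $\fh^\circ$ and $V_S$, and your lattice statement says nothing about how $V_S$ sits relative to $\{\eta_j\}_{j\in Z}$. For instance, take $\fh=\ft=\R^3$, $\eta_1=e_1^*$, $\eta_2=e_2^*$ and $V_S=\R(1,1,2)$. The weights extend to a basis, yet the primitive normals $(2,0,-1)$ and $(0,2,-1)$ span a non-saturated sublattice of $\Z^3$. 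Only exactness of \eqref{ses} rules this configuration out, and your reduction discards exactly that information. Passing to a finite cover of $T$, or using a splitting of $T\to T/H$, is no way around this, since unimodularity is precisely the lattice data a finite cover changes.

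\textbf{How to fix it.} Use the fact you already found, but on the Lie algebra side. Because $(S^1)^Z\times\{1\}^S\subset\ker P=\rho(H)$, the group $H_Z:=\rho^{-1}\big((S^1)^Z\times\{1\}^S\big)$ is a subtorus of $T$, identified by $\rho$ with $(S^1)^Z$. Let $Y_k\in\ell$, $k\in Z$, generate its circle factors. Then $Y_k\in\fh$ kills $\fh^\circ$, and $\langle\eta_j,Y_k\rangle=\delta_{jk}$ for every $j$. Adding multiples of $\xi$ to make the coefficients on $S$ nonnegative, the local image is exactly $\{\beta:\langle\beta-\alpha,Y_k\rangle\ge 0,\ k\in Z\}$. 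Moreover, $\{Y_k\}$ is a $\Z$-basis of $\ell\cap\operatorname{Lie}(H_Z)$, which is saturated in $\ell$, so it extends to a $\Z$-basis of $\ell$. This gives rationality and unimodularity at once, with no separate treatment of $H/H^0$.

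\textbf{The global step.} Here you need two hypotheses that are not in the statement: $M$ connected (for convexity) and $\Phi(M)$ compact. You say compactness is ``the case here'', but the statement only assumes properness. In that generality your argument yields only a locally finite family of facets, and the conclusion with finitely many half-spaces is then false. Take a toric surface $N$ over the epigraph of $x\mapsto\max_{n\in\Z}\big(nx-\tfrac{n(n+1)}{2}\big)$, which is closed, convex and locally unimodular with infinitely many edges. Let $T^2$ act on $N\times S^2$ through $N$. This is a tall complexity one space with proper moment map whose image is not polyhedral. So state the compact connected case explicitly; it is the only one the paper uses.

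Finally, delete the claim that tallness forces at least two nonzero exponents. For $S^1$ acting on $\C^2$ by $\lambda\cdot(z_0,z_1)=(\lambda z_0,z_1)$, the origin has $\xi=(0,1)$ and reduced space $\C$, so it is tall. You never use the claim, so the rest of your argument is unaffected.
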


For the rest of this paper, we will mainly focus on locally unimodular sets over a Delzant polytope. We will use $\pi$ to denote the natural projection $\ft^* \times \R \to \ft^*$. We need the following definition.

\begin{definition}\label{def:top-bottom}
    Given  a subset $\Tilde{U}$ of $\ft^* \times \R$, let
\begin{align*} \del_+ \Tilde{U}  &:= \{(\alpha,x)\in\Tilde{U}\mid \text{if }(\alpha,x')\in\Tilde{U}\textrm{ then } x \geq x'\}\\
\del_- \Tilde{U}  &:= \{(\alpha,x) \in \Tilde{U} \mid \text{if }(\alpha,x')\in \Tilde{U} \textrm{ then } x \leq x' \}. \end{align*}
We call $\del_+ \Tilde{U} $ the {\bf top} of $\Tilde{U}$ and $\del_- \Tilde{U} $ the {\bf bottom} of $\Tilde{U}$.
\end{definition}

The following two lemmas give a criterion for when a point $p$ in the symplectic toric $(T\times S^1)$-manifold $(M,\omega,\Tilde{\Phi})$ is not exceptional, considered as a point in the complexity one $T$-space $(M,\omega,\pi \circ \Tilde{\Phi})$. 

\begin{lemma}[\cite{LPT}]\label{lem:equal-orbits-slice-rep}
    Let $(M,\omega,\Tilde{\Phi})$ be a symplectic toric $(T \times S^1)$-manifold such that $\Tilde{\Phi}(M)$ is convex and $\Tilde{\Phi} \colon M \to \Tilde{\Phi}(M)$ is a $(T \times S^1)$-quotient map. 
  Fix a tall point $p$ in the complexity one $T$-space $(M,\omega,\pi \circ \Tilde{\Phi})$.
    \begin{enumerate}
        \item[(1)]  If $\Tilde{\Phi}(p)$ is on the bottom or top of $\Tilde{\Phi}(M)$ then
    the orbits $T \cdot p$ and $(T \times S^1) \cdot p$ are identical.
      \item[(2)] If $\Tilde{\Phi}(p)$ is not on the bottom or top of $\Tilde{\Phi}(M)$ then $p$ is non-exceptional.
     \end{enumerate}
     Consequently, if $p$ is exceptional, then $\Tilde{\Phi}(p)$ is on the bottom or top of $\Tilde{\Phi}(M)$.
\end{lemma}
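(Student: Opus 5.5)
The plan is to work entirely in the local normal form for the toric $(T\times S^1)$-action and reduce both claims to linear algebra on the weights of the slice representation. Because $\Tilde\Phi\colon M\to\Tilde\Phi(M)$ is a quotient map and $\Tilde\Phi(M)$ is convex, Lemma~\ref{lem:local-cone} and the Delzant local model identify a neighborhood of $(T\times S^1)\cdot p$ with $(T\times S^1)\times_{\Tilde K}\Tilde{\mathfrak k}^\circ\times\C^m$. Here $\Tilde K$ is the (connected) stabilizer of $p$ in $T\times S^1$, and $m=\dim\Tilde K$. The $\Tilde K$-weights $\Tilde\eta_1,\dots,\Tilde\eta_m$ form a $\Z$-basis of the weight lattice, and near $\Tilde\Phi(p)$ the image is the cone $\Tilde\Phi(p)+\Tilde{\mathfrak k}^\circ+\sum\R_{\ge0}\Tilde\eta_j$. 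The stabilizer of $p$ in $T$ is then $H=\Tilde K\cap T$, and the $T$-orbit of $p$ sits inside the $(T\times S^1)$-orbit. So everything reduces to comparing $\Tilde{\mathfrak k}$ with $\mathfrak t\times\{0\}$.

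For (1), suppose $\Tilde\Phi(p)$ lies on the top (the bottom is symmetric). The two orbits coincide if and only if $\Tilde K$ surjects onto the $S^1$ factor, i.e.\ $\Tilde{\mathfrak k}\not\subset\mathfrak t\times\{0\}$. Equivalently, we need $\Tilde{\mathfrak k}^\circ$ not to contain the vertical direction $(0,1)$. If it did, the local image would contain $\Tilde\Phi(p)+\R(0,1)$, and $\Tilde\Phi(p)+\epsilon(0,1)$ would lie in $\Tilde\Phi(M)$. That contradicts $\Tilde\Phi(p)$ being on the top. Hence the $S^1$-direction is generated by $\Tilde K$ together with $T$, and $(T\times S^1)\cdot p=T\cdot p$. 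We should double-check this using connectedness of $\Tilde K$: the image of $\Tilde K$ in $S^1$ is a connected subgroup with nonzero Lie algebra, hence all of $S^1$.

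For (2), suppose $\Tilde\Phi(p)$ is neither on the top nor on the bottom. Then the vertical line through $\Tilde\Phi(p)$ meets the local cone in an interval containing $\Tilde\Phi(p)$ in its interior. Write the vertical vector as $(0,1)=\nu+\sum a_j\Tilde\eta_j$ with $\nu\in\Tilde{\mathfrak k}^\circ$. Being able to move both up and down inside the cone forces the following: modulo $\Tilde{\mathfrak k}^\circ$, the vector $(0,1)$ is either zero, or it is a combination whose support $S=\{j:a_j\neq0\}$ allows sign changes.

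I then compute the complexity one slice representation of $H=\Tilde K\cap T$ at $p$. The symplectic slice is $\C^m$, plus possibly one extra trivial factor coming from the $S^1$-orbit direction if $\Tilde K\subset T$. The defining monomial $\xi$ of Lemma~\ref{lem:defining-monomial} is then read off as the primitive integer relation among the restricted weights $\Tilde\eta_j|_{\mathfrak h}$. Since the $\Tilde\eta_j$ form a basis and $\mathfrak h$ is the kernel of the projection $\Tilde{\mathfrak k}\to\R$, that relation is the coordinate vector of the $S^1$-component, namely $(a_j)$ up to scaling; with the trivial factor, it is the extra coordinate itself.

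Now apply Lemma~\ref{lem:exceptional-orbits} at $z=0$: the point $p$ is exceptional if and only if $\sum\xi_k>1$. We have to verify three things.
\begin{itemize}
\item If $(0,1)\in\Tilde{\mathfrak k}^\circ$, then $\xi$ is concentrated on the single trivial coordinate with exponent $1$.
\item Otherwise, non-top and non-bottom forces $(a_j)$ to have mixed signs. Then $\xi\ge0$ cannot equal $\lambda(a_j)$, which is a contradiction, so this case cannot occur.
\item The remaining case is the one where $(a_j)$ is a single $\pm e_j$, giving $\xi=e_j$ and $\sum\xi_k=1$.
\end{itemize}
In every case $p$ is non-exceptional.

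The main obstacle will be this bookkeeping in (2): correctly identifying the symplectic slice of the $T$-action and its defining monomial in terms of the toric data, especially the extra trivial $\C$ factor and the use of tallness. Tallness excludes the degenerate case where $\xi$ has all nonnegative entries with a genuine top/bottom, so it should be invoked exactly there. The final "consequently" is immediate from (2).
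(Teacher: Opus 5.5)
Your proof of part (1) is correct. The gap is in (2), at the step where you translate ``neither top nor bottom'' into a condition on $a=(a_j)$. The sum $\Tilde{\mathfrak k}^\circ\oplus\operatorname{span}\{\Tilde\eta_j\}$ is direct and the $\Tilde\eta_j$ are linearly independent. So $\Tilde\Phi(p)+s(0,1)$ lies in the local cone if and only if $s\,a_j\ge 0$ for every $j$. This gives four cases:
\begin{itemize}
\item $a=0$: the vertical line passes through $\Tilde\Phi(p)$ in both directions.
\item $a\ge 0$, $a\ne 0$: only upward motion is allowed, so $\Tilde\Phi(p)$ is on the bottom.
\item $a\le 0$, $a\ne 0$: $\Tilde\Phi(p)$ is on the top.
\item Mixed signs: the vertical line meets the cone only at its apex, so $\Tilde\Phi(p)$ is on \emph{both} the top and the bottom.
\end{itemize}
Your assertion that a mixed-sign support ``allows sign changes'' is therefore backwards. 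Your third bullet is both unjustified and false. A one-signed $a$ need not be $\pm e_j$: for $a=2e_1$ or $a=e_1+e_2$ one gets $\xi=a$, $\sum\xi_k=2$, and $p$ \emph{is} exceptional. These are exactly the exceptional points that the lemma places on the top or bottom. They are excluded from (2) only by the hypothesis, and your case analysis, taken at face value, would declare them non-exceptional.

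The fix is that in (2) only $a=0$ can occur. Convexity of $\Tilde\Phi(M)$ makes the vertical slice through $\Tilde\Phi(p)$ an interval with $\Tilde\Phi(p)$ in its interior. Hence $\pm(0,1)$ both lie in the tangent cone, which forces $a=0$. That is, $(0,1)\in\Tilde{\mathfrak k}^\circ$, and by connectedness $\Tilde K\subset T\times\{1\}$. Your first bullet then finishes the proof. More directly: for $\lambda$ near $1$, the points $(1,\lambda)\cdot p$ lie in the same $T$-moment fiber, in $T$-orbits distinct from $T\cdot p$, with the same $T$-stabilizer $\Tilde K$; so $p$ is not exceptional.

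You have also misplaced the role of tallness. It is what rules out the mixed-sign case: there the vertical slice over $\pi(\Tilde\Phi(p))$ is a single point, so by (1) the whole $T$-moment fiber is the single orbit $T\cdot p$. That case lies in the setting of (1), not (2). In (2), tallness is needed at most to invoke Lemma~\ref{lem:exceptional-orbits}, and not at all if you use the direct stabilizer argument.
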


\begin{lemma}\label{lem:flat-top-nonexception}
    Let $\De \subset \ft^*$ be a Delzant polytope. Fix a continuous function $f: \De \to \R$ and $c > \max\{f(\alpha): \alpha \in \De\}$ such that $\Tilde{\De} := \{(\alpha,x) \in \De \times \R: c \geq x \geq f(\alpha)\}$ is a Delzant polytope. Let $(\Tilde{M},\Tilde{\omega},\Tilde{\Phi})$ be a symplectic toric $(T\times S^1)$-manifold with moment image $\Tilde{\De}$. Then any $p \in \Tilde\Phi^{-1}(\del_+\Tilde{\De})$ is not an exceptional point of the complexity one $T$-space $(\Tilde{M},\Tilde{\omega},\pi \circ \Tilde{\Phi})$.
\end{lemma}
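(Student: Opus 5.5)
Fix $p \in \Tilde\Phi^{-1}(\del_+\Tilde{\De})$ and write $\Tilde\Phi(p) = (\alpha, c)$. The plan is to read off the slice data at $p$ from the toric local model and then check, via Lemma~\ref{lem:exceptional-orbits}, that the defining monomial has total exponent at most one on the vanishing coordinates.

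\textbf{Facet structure at the top.} Since $c > \max f$, the point $(\alpha,c)$ does not lie on the bottom graph $x=f(\alpha)$. Hence the facets of $\Tilde\De$ through $(\alpha,c)$ are the top facet $\{x=c\}$, with primitive outward normal $(0,1) \in \ft\times\R$, together with the facets of the form $F\times[f,c]$, where $F$ is a facet of $\De$ through $\alpha$. Near the top these are vertical, so their normals have the form $(v_F,0)$. If $\alpha$ lies on $m$ facets of $\De$, then $(\alpha,c)$ lies on exactly $m+1$ facets of $\Tilde\De$, with normals $(v_1,0),\dots,(v_m,0),(0,1)$. Because $\Tilde\De$ is Delzant, these extend to a $\Z$-basis of the lattice.

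\textbf{Local model and defining monomial.} By the toric local normal form, the $(T\times S^1)$-stabilizer of $p$ is the subtorus $\Tilde K$ with Lie algebra spanned by these normals. The slice representation of $\Tilde K$ on $\C^{m+1}$ has weights given by the dual basis, so $\Tilde K \to (S^1)^{m+1}$ is an isomorphism. The $T$-stabilizer is $H = \Tilde K \cap (T\times\{1\})$. Since $(0,1)$ is a basis element, the map $\Tilde K \to S^1$ onto the last factor, $(t,s)\mapsto s$, is in coordinates the projection onto the coordinate dual to $(0,1)$. Thus $H$ is the kernel of the map $(t_0,\dots,t_m)\mapsto t_0$, where $z_0$ is the coordinate whose weight detects the $S^1$-direction.

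The complexity one local model at $p$ is the toric model with the extra $S^1$-direction absorbed. This gives the exact sequence~\eqref{ses} with $P(t)=t_0$, so the defining monomial is $\xi = (1,0,\dots,0)$. One must check that the $T$-slice representation really is this restriction of the toric slice representation. This is where the effective-action and tallness bookkeeping lives, and I expect it to be the main obstacle. Concretely, I would verify that $T\cdot p$ has codimension one in $(T\times S^1)\cdot p$ or equals it, and match $\C^{h+1}$ with the toric slice.

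\textbf{Conclusion.} Given $\xi=(1,0,\dots,0)$, Lemma~\ref{lem:exceptional-orbits} gives $\sum_{z_k=0}\xi_k \le 1$ at every point of the model, in particular at $[1,0,0]$. Hence $p$ is not exceptional.

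An alternative route, if the slice identification is messy, is to argue directly from the definition. Near $p$, the fiber $(\pi\circ\Tilde\Phi)^{-1}(\alpha)/T$ is the vertical segment $\{\alpha\}\times[f(\alpha),c]$. Points on it slightly below $c$ have the same $T$-stabilizer as $p$, because adding the free top-facet direction does not change $\Tilde K\cap T$: the $(0,1)$ factor is a complementary basis direction. So nearby orbits in the fiber do not have strictly smaller stabilizer, and $p$ is not exceptional. Tallness holds because the segment is nondegenerate.
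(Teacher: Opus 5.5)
Your proposal is correct, but it argues differently from the paper. The paper's proof is semi-global. It chooses $\epsilon>0$ with $c-\epsilon>\max f$, so that $\Tilde\De$ coincides with $\De\times[c-\epsilon,c]$ above height $c-\epsilon$. It notes that the corresponding product toric manifold (essentially $M_\De$ times a disc, with $T$ acting only on the first factor) has no exceptional points. It then invokes the uniqueness part of Theorem~\ref{thm:KL} to identify $\Tilde\Phi^{-1}$ of that region with the product.

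You instead stay at the single point $(\alpha,c)$. The normals $(v_1,0),\dots,(v_m,0),(0,1)$ give $\Tilde K=K_\alpha\times S^1$ and $H=\Tilde K\cap(T\times\{1\})=K_\alpha$. From there, either $\xi=(1,0,\dots,0)$ and Lemma~\ref{lem:exceptional-orbits} finishes, or, more directly, points over $(\alpha,x)$ with $x<c$ have stabilizer $K_\alpha\times\{1\}$, hence the same $T$-stabilizer $K_\alpha$ as $p$.

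What each route buys:
\begin{itemize}
\item Your route needs no classification theorem, and it spells out the stabilizer comparison that the paper calls ``straightforward to verify'' for the product.
\item It also sidesteps a small wrinkle in the paper's argument: $\Tilde\Phi^{-1}(\De\times[c-\epsilon,c])$ has boundary, so the comparison via Theorem~\ref{thm:KL} should really be made over the convex, locally unimodular set $\De\times(c-\epsilon,c]$.
\item The paper's route is shorter given the tools already in place, and it avoids any local-normal-form bookkeeping.
\end{itemize}

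The step you flagged as the main obstacle closes at once. Since $(0,1)$ lies in the Lie algebra of $\Tilde K$, we have $\{1\}\times S^1\subset\Tilde K$ and hence $T\cdot p=(T\times S^1)\cdot p$ (as in Lemma~\ref{lem:equal-orbits-slice-rep}(1)). The two symplectic slices at $p$ are therefore the same space, and the $T$-slice representation is the restriction to $H$ of the $\Tilde K$-slice representation. This gives the exact sequence with $P(t)=t_0$.

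Your second route does not need this identification and is the cleaner one to present. One small correction there: $(\pi\circ\Tilde\Phi)^{-1}(\alpha)/T$ is not the segment itself, but maps onto $\{\alpha\}\times[f(\alpha),c]$ with circle fibers over interior points. This is harmless, because $T$-stabilizers are constant on $(T\times S^1)$-orbits and $\Tilde\Phi^{-1}(\alpha,c)=T\cdot p$. So every other nearby orbit in the fiber lies over some $(\alpha,x)$ with $x<c$ and has the same $T$-stabilizer as $p$.
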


\begin{proof}
    Fix $\epsilon>0$ such that $c-\epsilon > \max\{f(\alpha): \alpha \in \De\}$. Let $\Tilde{{\De}}_\epsilon = \De \times [c-\epsilon,c]$ and let $(\Tilde{M}_\epsilon,\Tilde{\omega}_\epsilon,\Tilde{\Phi}_\epsilon)$ be a symplectic toric $(T\times S^1)$-manifold with moment image $\Tilde{\De}_\epsilon$. It is straightforward to verify that the complexity one $T$-space $(\Tilde{M}_\epsilon,\Tilde{\omega}_\epsilon,\pi \circ \Tilde{\Phi}_\epsilon)$ has no exceptional points. By Theorem~\ref{thm:KL}, since $\Tilde{\De}_\epsilon$ is convex, $(\Tilde{M}_\epsilon,\Tilde{\omega}_\epsilon,\Tilde{\Phi}_\epsilon)$ is isomorphic to $(\Tilde{\Phi}^{-1}(\Tilde{\De}_\epsilon),\Tilde{\omega},\Tilde{\Phi})$. In particular, $\Tilde\Phi^{-1}(\del_+\Tilde{\De}) \subset \Tilde{\Phi}^{-1}(\Tilde{\De}_\epsilon)$, so the claim follows.
\end{proof}

We end this section with a technical lemma from~\cite{LPT} that we will need later in proving Theorem~\ref{thm:toric}.

\begin{lemma}[\cite{LPT}]\label{lem:slice-rep}
    Let $(M,\omega,\Tilde{\Phi})$ be a symplectic toric $(T \times S^1)$-manifold such that  $\Tilde{\Phi}(M)$ is convex and  $\Tilde{\Phi}  \colon  M \to \Tilde{\Phi}(M)$ 
     is a $(T \times S^1)$-quotient map. Fix a tall point $p$ in the complexity one $T$-space $(M,\omega,\pi \circ \Tilde{\Phi})$ such that $\Tilde{\Phi}(p) \in \del_- \Tilde{\Phi}(M)$ .
       Let  $\Y$ be the local model for $p$ in $(M,\omega,\pi \circ \Tilde{\Phi})$ with defining monomial $P$. Let $\Tilde{P}: T\times_H (S^1)^{h+1} \to S^1$ be defined by $\Tilde{P}([t,z]) = P(z)$.
 Then a  $(T\times S^1)$-invariant open neighborhood of $p$ is
        $(T\times S^1)$-equivariantly symplectomorphic to an open neighborhood of $[1,0,0]$
        in $Y$, where $T \times S^1$ acts on $Y$ by
      a $T$-equivariant isomorphism $\Theta \colon  T\times S^1 \to T \times_H (S^1)^{h+1}$ such that  $\Tilde P(\Theta(t,\lambda)) = \lambda$ for all $(t, \lambda) \in T \times S^1$.
\end{lemma}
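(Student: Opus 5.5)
The plan is to use the local normal form of the symplectic toric $(T\times S^1)$-manifold near $p$ and identify it directly with the complexity one local model $Y$. The argument has three steps.

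First, use the $T$-stabilizer. Since $\Tilde{\Phi}(p)\in\del_-\Tilde{\Phi}(M)$ and $p$ is tall, Lemma~\ref{lem:equal-orbits-slice-rep}(1) gives $T\cdot p=(T\times S^1)\cdot p$. Hence the $(T\times S^1)$-stabilizer $\Tilde H$ of $p$ satisfies $\dim \Tilde H=\dim H+1=h+1$, and $\Tilde H\cap (T\times\{1\})=H$. The projection $\Tilde H\to S^1$ to the second factor must be surjective, because the $T$-orbit already fills the $(T\times S^1)$-orbit. So we get a short exact sequence $1\to H\to\Tilde H\to S^1\to 1$. Because $M$ is toric and the action is effective, $\Tilde H$ is a torus of dimension $h+1$ acting on the symplectic slice $\C^{h+1}$ by an isomorphism $\Tilde\rho\colon\Tilde H\to (S^1)^{h+1}$ (Delzant/unimodularity). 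This slice is the same vector space as the symplectic slice for the $T$-action, since the two orbits coincide. Restricting $\Tilde\rho$ to $H$ gives the slice representation $\rho$.

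Second, match $\Tilde H$ with $T\times_H(S^1)^{h+1}$. The toric local model is $(T\times S^1)\times_{\Tilde H}\C^{h+1}$, with no $\tilde{\fh}^\circ$ factor since $\Tilde H$ has full dimension relative to the slice. Via $\Tilde\rho$, the map $\Tilde P:=\text{pr}_{S^1}\circ\Tilde\rho^{-1}\colon (S^1)^{h+1}\to S^1$ is a homomorphism with kernel $\rho(H)$. By the uniqueness in Lemma~\ref{lem:defining-monomial}, it equals the defining monomial $P$ up to inversion. Inversion is ruled out because $p$ lies on the bottom, not the top: the $S^1$-component of the moment map, $\frac12\sum|z_j|^2\langle\text{weight}\rangle$, must increase away from $p$, which forces nonnegative exponents. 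This is where the sign convention for $\xi\in\Z_{\ge0}^{h+1}$ gets used.

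Third, build $\Theta$. Define $\Theta\colon T\times S^1\to T\times_H(S^1)^{h+1}$ by choosing, for $(t,\lambda)$, any $\tilde h\in\Tilde H$ with second coordinate $\lambda$, and setting $\Theta(t,\lambda)=[t\,\mathrm{pr}_T(\tilde h)^{-1},\Tilde\rho(\tilde h)]$. This is well defined modulo $H$, is a $T$-equivariant isomorphism by the exact sequences, and satisfies $\Tilde P(\Theta(t,\lambda))=\lambda$. Through $\Theta$, the toric model $(T\times S^1)\times_{\Tilde H}\C^{h+1}$ is identified with $T\times_H\C^{h+1}$, and $\fh^\circ$ is absorbed since $\dim\fh^\circ=\dim T-h$ matches. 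This gives the model $Y$. We then apply the local normal form (Theorem~\ref{thm:lnf}) for the $(T\times S^1)$-action.

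The main obstacle is the last identification. One has to check that $(T\times S^1)\times_{\Tilde H}\C^{h+1}$ is $(T\times S^1)$-equivariantly symplectomorphic to $T\times_H\fh^\circ\times\C^{h+1}$ with the twisted action, including the $\fh^\circ$ factor. I would do this by writing both as symplectic reductions of $T^*(T\times S^1)\times\C^{h+1}$ and $T^*T\times\C^{h+1}$ and comparing them.
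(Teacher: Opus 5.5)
The paper does not prove this lemma; it quotes it from \cite{LPT}. So I am judging your argument on its own merits. Your strategy is the natural one, and several steps are correct. From $T\cdot p=(T\times S^1)\cdot p$ you get $1\to H\to\Tilde H\to S^1\to 1$. The map $\Tilde\rho$ is an isomorphism restricting to $\rho$. Since two surjections $(S^1)^{h+1}\to S^1$ with the same kernel differ by an automorphism of $S^1$, $\operatorname{pr}_{S^1}\circ\Tilde\rho^{-1}=P^{\pm1}$. Your $\Theta$ is a well-defined, injective (hence bijective), $T$-equivariant homomorphism with $\Theta(\Tilde h)=[1,\Tilde\rho(\Tilde h)]$ for $\Tilde h\in\Tilde H$. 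Two steps, however, are wrong as written.

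\textbf{The missing annihilator factor.} The $(T\times S^1)$-local model at $p$ is $(T\times S^1)\times_{\Tilde H}\Tilde\fh^\circ\times\C^{h+1}$, with $\dim\Tilde\fh^\circ=\dim T-h$. This factor vanishes only at fixed points; the fact that $\Tilde H$ acts with full rank on the slice does not remove it. Your space $(T\times S^1)\times_{\Tilde H}\C^{h+1}$ has dimension $\dim T+h+2$, which is less than $\dim M=2\dim T+2$ whenever $h<\dim T$. So the symplectomorphism you call the main obstacle cannot exist, and ``$\fh^\circ$ is absorbed'' has no content.

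Once the factor is restored, you do not need to compare reductions. Let $T\times S^1$ act on $Y$ through $\Theta$, i.e.\ by $[s,w]\cdot[t,\nu,z]=[st,\nu,wz]$; this is an effective Hamiltonian action. The stabilizer of $[1,0,0]$ in $T\times_H(S^1)^{h+1}$ is $K=\{[1,w]\}$. Hence its $(T\times S^1)$-stabilizer is $\Theta^{-1}(K)=\Tilde H$, acting on the slice $\C^{h+1}$ by $\Tilde\rho$. Thus $p\in M$ and $[1,0,0]\in Y$ have the same $(T\times S^1)$-local model. Applying Theorem~\ref{thm:lnf} at both points and composing the two identifications gives the lemma.

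\textbf{The sign argument.} The claim ``the $S^1$-component of the moment map must increase away from $p$'' is false. Take $T=S^1$ and a vertex with weights $(2,-1),(-1,1)\in\ell^*\oplus\Z$. These form a $\Z$-basis, and the vertex lies on the bottom: the cone contains $(0,1)=(2,-1)+2(-1,1)$ but not $(0,-1)$. Yet the $\R$-component equals $-\tfrac12|z_1|^2$ along the slice coordinate $z_1$, so it decreases.

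The correct argument runs as follows. Write $\operatorname{pr}_{S^1}|_{\Tilde H}=\prod_j\Tilde\rho_j^{a_j}$ and differentiate to get $(0,1)|_{\Tilde\fh}=\sum_j a_j\Tilde\eta_j$. By the local normal form, together with $\Tilde\Phi$ being a quotient map, the image near $\Tilde\Phi(p)$ is $\Tilde\Phi(p)+\Tilde\fh^\circ+\sum_j\R_{\ge0}\Tilde\eta_j'$ for lifts $\Tilde\eta_j'$ of the $\Tilde\eta_j$. So a vector $v$ points into $\Tilde\Phi(M)$ if and only if $v|_{\Tilde\fh}$ has nonnegative coordinates in the basis $\{\Tilde\eta_j\}$ of $\Tilde\fh^*$. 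If $a=-\xi$, then $-(0,1)$ would point into $\Tilde\Phi(M)$, contradicting $\Tilde\Phi(p)\in\del_-\Tilde\Phi(M)$. Hence $a=\xi$, and $\Tilde P(\Theta(t,\lambda))=\lambda$.
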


\section{Proof of Theorem~\ref{thm:toric}}\label{sec:extension}
In this section, we prove Theorem~\ref{thm:toric}. The main argument works very similarly to the argument in~\cite{LPT}.
The following local existence result in~\cite{LPT} is key to the local existence in our case.

\begin{lemma}[\cite{LPT}]\label{lem:raystrong}
Let $Y = T \times_H \mathfrak{h}^\circ \times \C^{h+1}$ be a tall complexity one local model with moment map $\Phi_Y  \colon Y \to \ft^*$. Then there exists a symplectic toric $(T\times S^1)$-action extending the original $T$-action with moment map $\Tilde{\Phi}_Y: Y \to \ft^* \times \R$ such that $\Tilde{\Phi}_Y$ is a $(T\times S^1)$-quotient map and 
$\Tilde \Phi_Y( Y) = \{(\alpha, x) \in \Phi_Y(Y) \times \R \mid x \geq f(\alpha) \}$ for some continuous function $f: \Phi_Y(Y) \to \R$.
Moreover, $\Tilde{\Phi}_Y(Y)$ is a Delzant polyhedral set.
\end{lemma}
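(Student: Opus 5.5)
The plan is to enlarge the acting torus using the defining monomial. By Lemma~\ref{lem:defining-monomial}, $\rho$ identifies $H$ with $\ker P \subset (S^1)^{h+1}$, where $P(t)=\prod t_j^{\xi_j}$. Set $G := T \times_H (S^1)^{h+1}$, with $H$ acting on $(S^1)^{h+1}$ through $\rho$. This is a compact connected abelian group of dimension $\dim T + 1$, so it is a torus.

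\textbf{Step 1: the action of $G$.} $G$ acts on $Y$ by
\[
[t,s]\cdot[t',\nu,z] = [tt',\nu,s z].
\]
This is well defined because the $(S^1)^{h+1}$-action on $\C^{h+1}$ commutes with $\rho(H)$ and preserves the symplectic form. The map $t \mapsto [t,1]$ embeds $T$ in $G$ and recovers the original action. The action is effective: $Y$ contains the free $G$-orbit of points with all $z_j \ne 0$, since there $[t,s]$ fixes $[1,0,z]$ only if $s=\rho(t)^{-1}$ with $t\in H$, i.e.\ $[t,s]=1$.

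\textbf{Step 2: the splitting $\Theta$.} The map $\tilde P([t,s]) := P(s)$ is well defined because $P\circ\rho=1$. Its kernel is exactly $T$, since $\ker P=\rho(H)$. This gives an exact sequence $1\to T\to G\xrightarrow{\tilde P} S^1\to 1$ of tori, which splits. Concretely, since $\xi$ is primitive (by exactness of \eqref{ses}), choose $c\in\Z^{h+1}$ with $\sum c_j\xi_j=1$. The circle $\lambda\mapsto[1,(\lambda^{c_0},\dots,\lambda^{c_h})]$ then satisfies $\tilde P=\lambda$. This yields a $T$-equivariant isomorphism $\Theta\colon T\times S^1\to G$ with $\tilde P\circ\Theta(t,\lambda)=\lambda$.

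\textbf{Step 3: the moment map and its image.} The $S^1$-factor acts with moment map
\[
x([t,\nu,z])=\tfrac12\sum_j c_j|z_j|^2
\]
(up to a correction by $\nu$ and the $T$-weights, which only adds an affine function of $\Phi_Y$ and does not affect the shape). Set $\tilde\Phi_Y=(\Phi_Y,x)$.

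To identify the image, fix $\alpha=\nu+\tfrac12\sum|z_j|^2\eta_j$. By the last claim of Lemma~\ref{lem:defining-monomial}, the vectors $(|w_j|^2)$ in the same fiber are exactly $(|z_j|^2+2s\xi_j)$ with all entries $\ge 0$. Along this family $x$ changes by $s\sum c_j\xi_j=s$. Since $\xi\in\Z^{h+1}_{\ge0}$ is nonzero, $s$ is bounded below and unbounded above. Hence the fiber of the image over $\alpha$ is a ray $[f(\alpha),\infty)$, where $f(\alpha)$ is the value of a parametric linear program. Such an $f$ is piecewise linear and continuous on the polyhedral cone $\Phi_Y(Y)$. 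This gives $\tilde\Phi_Y(Y)=\{(\alpha,x): x\ge f(\alpha)\}$.

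\textbf{Step 4: quotient map and Delzant property.} As a $G$-space, $Y$ is the standard toric local model $G\times_{\{1\}}(\text{open cone})$, and in fact a product $(\C^{h+1}\times T^*\text{torus})$ after untwisting by $\Theta$. Its image is therefore a translate of $\mathfrak h^\circ\oplus\operatorname{cone}\{\tilde\eta_j\}$, where the $\tilde\eta_j$ are the $G$-weights on the coordinates $z_j$. I would show that $\tilde\Phi_Y$ is a $G$-quotient map by citing the standard fact for toric local models $\C^{m}\times T^*(S^1)^{r}$ (Karshon--Lerman). Unimodularity at every point follows from effectiveness of the $G$-action together with the linear-algebra fact that the weights $\tilde\eta_j$ (the coordinate characters) extend to a basis of the weight lattice of $G$.

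I expect the main obstacle to be this last bookkeeping. One must check that after untwisting the fibered product $T\times_H(\cdots)$, the $G$-weights of $\C^{h+1}$ are precisely a partial lattice basis complementing $\mathfrak h^\circ$. This is where the exactness of \eqref{ses}, and not merely $\sum\xi_j\eta_j=0$, is used.
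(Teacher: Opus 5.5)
Your construction is the right one and matches the source. The paper does not reprove this lemma; it quotes it from LPT, and the isomorphism $\Theta\colon T\times S^1\to T\times_H(S^1)^{h+1}$ with $\Tilde P\circ\Theta(t,\lambda)=\lambda$ recorded in Lemma~\ref{lem:slice-rep} is exactly your Step 2.

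\textbf{The false step.} Your claim that $\xi$ is primitive ``by exactness'' is wrong. Exactness only says $\rho(H)=\ker P$. The group $\ker P$ has $\gcd(\xi_0,\dots,\xi_h)$ components, and $H$ is merely a closed subgroup, so it may be disconnected.

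For example, take $T=S^1$, $H=\Z/2$, $h=0$, and $\rho$ the inclusion. Then $Y=S^1\times_{\Z/2}(\R\times\C)$ is a tall complexity one local model: its reduced spaces are the cones $\C/\pm1$. Here $P(t_0)=t_0^2$, so $\xi=(2)$, and no integer $c_0$ satisfies $2c_0=1$. So no splitting of the form $\lambda\mapsto[1,\lambda^{c}]$ exists. A splitting must use a circle with a $T$-component, e.g.\ $e^{2\pi i\theta}\mapsto[e^{\pi i\theta},e^{\pi i\theta}]$. This is well defined in $G=S^1\times_{\Z/2}S^1$ because $[-1,-1]=[1,1]$.

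\textbf{The repair.} You already assert the abstract splitting, so the fix is easy. Let $(X_T,X_S)\in\ft\times\R^{h+1}$ be a lift of the infinitesimal generator of the splitting circle; in the example above, $X_T=X_S=\tfrac12$. Then $\sum_j\xi_jX_{S,j}=1$. The $S^1$-moment map is $x=\langle\Phi_Y,X_T\rangle+\tfrac12\sum_jX_{S,j}|z_j|^2$, up to a constant. Your fiber computation in Step 3 uses only $\sum_j c_j\xi_j=1$, so it goes through verbatim with the real vector $c=X_S$. The term $\langle\Phi_Y,X_T\rangle$ is precisely the linear correction you anticipated.

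\textbf{A correction to Step 4.} The $G$-stabilizer of $[1,0,0]$ is not trivial. It is the $(h+1)$-torus $K=\{[1,s]\}$, which acts on $\C^{h+1}$ by the standard representation. Correspondingly, the linear part of the image cone is $\mathfrak k^\circ\subset\mathfrak g^*$, which projects isomorphically onto $\fh^\circ$.

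Because $K$ is a subtorus, you can choose a complementary subtorus $C$ and obtain $Y\cong\C^{h+1}\times T^*C$ as $G$-spaces. Both the quotient-map property and unimodularity then follow immediately. The facet normals are the generators of the coordinate circles of $K$, and they extend to a basis of the lattice of $G$ because $K$ is connected.

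So the bookkeeping you flagged as the main obstacle is routine, and exactness is not what is used there. Tallness, i.e.\ $\xi\ge0$, enters only in Step 3, where it makes the vertical slices upward rays.
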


The following proposition is a variation of \cite[Proposition 5.5]{LPT} and takes care of the local uniqueness of our construction. 
\begin{proposition}\label{prop:local-uniqueness}
Fix a convex set $U \subseteq \ft^*$ and a continuous function $f_i \colon U \to \R$ for $i \in \{1,2\}$ such that that the following hold:
\begin{enumerate}
    \item  The set $\Tilde U_i:=\{ (\alpha,x)  \in  U \times \R \mid  x \geq f_i(\alpha)\}$ is convex and locally unimodular for $i \in \{1,2\}.$
    \item There exist symplectic toric $(T \times S^1)$-manifolds  $(\Tilde M_1, \Tilde \omega_1, \Tilde \Phi_1)$ and $(\Tilde M_2, \Tilde \omega_2, \Tilde \Phi_2)$   and  an isomorphism from  the skeleton of the tall complexity one $T$-space $(\Tilde M_1,\Tilde \omega_1, \pi \circ \Tilde \Phi_1)$ to the skeleton of $(\Tilde M_2, \Tilde \omega_2, \pi \circ \Tilde \Phi_2)$; moreover, $\Tilde \Phi_i \colon \Tilde M_i \to \Tilde U_i$ is a $(T \times S^1)$-quotient map for $i \in \{1,2\}.$   
    \end{enumerate}
  Then there exist $A \in \ell$ and $B \in \R$ such that $f_2(\alpha)-f_1(\alpha) = \langle \alpha ,A \rangle +B$ for all $\alpha \in U$.
\end{proposition}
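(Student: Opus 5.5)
The plan is to show that $g := f_2 - f_1$ is locally of the form $\langle \alpha, A\rangle + B$ with $A \in \ell$ near every point of $U$. Because $U$ is convex, hence connected, these local affine expressions will then patch into a single global one. The local statement is proved by comparing the two toric structures near the bottoms $\del_-\Tilde U_i$, using the skeleton isomorphism to match local data.

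\textbf{Step 1: matching local models.} Fix $\alpha \in U$ and let $p_i \in \Tilde\Phi_i^{-1}(\alpha, f_i(\alpha))$.
\begin{itemize}
\item By Lemma~\ref{lem:equal-orbits-slice-rep}(1), $T\cdot p_i = (T\times S^1)\cdot p_i$, so the $T$-orbit over each bottom point is a single $(T\times S^1)$-orbit.
\item By Lemma~\ref{lem:equal-orbits-slice-rep}(2), any exceptional orbit lies over the top or the bottom. Since $\Tilde U_i$ has no top, the skeleton lies over $\del_-\Tilde U_i$.
\end{itemize}
Now split into two cases.
\begin{itemize}
\item If $\alpha \in \pi\Tilde\Phi_1(\text{skeleton})$, the isomorphism of skeletons gives $\alpha$ in the skeleton image of $\Tilde M_2$ as well. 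The corresponding exceptional orbits over $\alpha$ have the same stabilizer $H$ and isomorphic slice representations, hence the same $T$-local model $Y$ with the same defining monomial $P$.
\item If $\alpha$ is not in the skeleton image, the orbits over $(\alpha, f_i(\alpha))$ are non-exceptional. I would argue that their local models are then determined by the local shape of $U$ near $\alpha$ (via Lemma~\ref{lem:local-cone}) together with the stabilizer. The stabilizer is read off from which facets of $U$ contain $\alpha$, because the bottom orbit is non-exceptional. So here too the two local models coincide.
\end{itemize}

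\textbf{Step 2: relating $f_1$ and $f_2$ locally.} Apply Lemma~\ref{lem:slice-rep} to each $\Tilde M_i$ at $p_i$. A $(T\times S^1)$-invariant neighborhood of $p_i$ is equivariantly symplectomorphic to a neighborhood of $[1,0,0]$ in the same model $Y$. The $T\times S^1$ action is given through isomorphisms $\Theta_i\colon T\times S^1 \to T\times_H (S^1)^{h+1}$ with $\Tilde P\circ\Theta_i(t,\lambda) = \lambda$, and each $\Theta_i$ is $T$-equivariant.

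Then $\Theta_2^{-1}\circ\Theta_1$ is an automorphism of $T\times S^1$ that is the identity on $T$ and preserves the $S^1$-coordinate. Such an automorphism has the form $(t,\lambda)\mapsto (t\,\chi(\lambda),\lambda)$ for a homomorphism $\chi\colon S^1\to T$, i.e. for some $A \in \ell$. Pulling back moment maps, the $S^1$-components of the two toric moment maps on the common model differ by $\langle \pi\circ\Phi, A\rangle$ plus a constant $B$.

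Restricting to the bottom of the image gives $f_2 - f_1 = \langle \cdot, A\rangle + B$ on a neighborhood of $\alpha$ in $U$. Local unimodularity in hypothesis (1) is what guarantees the $\Tilde\Phi_i$ are quotient maps, which Lemma~\ref{lem:slice-rep} requires.

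\textbf{Step 3: patching.} Cover $U$ by such neighborhoods. On overlaps, two expressions $\langle\cdot,A\rangle + B$ and $\langle\cdot,A'\rangle + B'$ agree on an open subset of $U$. When $U$ has nonempty interior, which holds since it is a moment image, this forces $A = A'$ and $B = B'$. Connectedness of $U$ then yields global constants $A$ and $B$.

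\textbf{Main obstacle.} I expect the hardest part to be Step 1 in the non-exceptional case: justifying that the two $T$-local models at bottom points agree without any skeleton data to compare. The alternative I would try first is to avoid that case entirely. Wherever the bottom point is non-exceptional, the local models have no exceptional points, so the argument in \cite[Proposition 5.5]{LPT} should go through verbatim there. I would then apply the skeleton isomorphism only at the exceptional points.
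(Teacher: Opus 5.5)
Your proposal is correct and follows essentially the paper's route: you match the $T$-local models at the bottom points (via the skeleton isomorphism in the exceptional case and the local shape of $U$ otherwise), compare the two isomorphisms $\Theta_i$ from Lemma~\ref{lem:slice-rep} to get a homomorphism $S^1\to T$ (that is, $A\in\ell$), conclude that $f_2-f_1$ is locally integral affine, and finish using connectedness of $U$. One small correction: the quotient-map property of $\Tilde\Phi_i$ is an explicit hypothesis in (2), not a consequence of the local unimodularity in (1).
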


\begin{proof}
    Fix $\beta \in U$ and $p_i \in \Tilde M_i$ such that
$\Tilde \Phi_i(p_i) = (\beta, f_i(\beta))$ for $i \in \{1,2\}$.
Our first claim is that $p_1$ and $p_2$ have the same $T$-stabilizer and $T$-slice representation.
To see this, note that since $\Tilde \Phi_i$ induces
a homeomorphism from $\Tilde M_i/(T \times S^1)$ to $\Tilde U_i$,
there exists a unique $(T \times S^1)$-orbit $o_i$ in $\Tilde M_i$ such that
$\Tilde \Phi_i(o_i) = (\beta, f_i(\beta))$ for $i \in \{1,2\}$; by part (1) of Lemma~\ref{lem:equal-orbits-slice-rep} $o_i$ is also the $T$-orbit through $p_i$. By assumption, $(\Tilde M_1,\Tilde \omega_1, \pi \circ \Tilde \Phi_1)$ and $(\Tilde M_2, \Tilde \omega_2, \pi \circ \Tilde \Phi_2)$ have isomorphic skeletons. Moreover, by the last claim of Lemma~\ref{lem:equal-orbits-slice-rep}, each exceptional orbit is mapped to $\del_- \Tilde{U}_i$.
Hence, if either $p_1$ or $p_2$ is  exceptional, then both are exceptional and they have the same $T$-stabilizer and $T$-slice representation.  Otherwise,  neither $p_1$ nor $p_2$ is exceptional, in which case $p_1$ and $p_2$ still have the same $T$-stabilizer and $T$-slice representation, because they are determined by the local information around $\beta \in U$.

Let $Y = T \times_H \fh^\circ \times \C^{h+1}$ be the local model for both $p_1$ and $p_2$. Let
$P$ be the defining monomial and define $\Tilde{P} \colon T \times_H (S^1)^{h+1} \to S^1$ to be $\Tilde{P}([t,z]) = P(z)$. By Lemma~\ref{lem:slice-rep},
a $(T \times S^1)$-invariant open neighborhood  $V_i$ of $p_i$ in $M_i$ is
        $(T \times S^1)$-equivariantly symplectomorphic to an open neighborhood  of $\{[t,0,0]\}$
        in $Y$, where $T \times S^1$ acts on $Y$ by
      a $T$-equivariant isomorphism $\Theta_i \colon T \times S^1 \to T \times_H (S^1)^{h+1}$ that satisfies  $\Tilde P(\Theta_i(t,\lambda)) = \lambda$ for all $(t, \lambda) \in T \times S^1.$ 
Since  $\Theta_1(1, \lambda) \cdot  \Theta_2(1,\lambda)^{-1}$
lies in $\ker \Tilde P =T \times_H H \simeq T$ for all $\lambda \in S^1$,
there exists a homomorphism $A \colon S^1 \to T$ such that
$\Theta_2(1, \lambda) = A(\lambda) \cdot \Theta_1(1, \lambda)$ for all $\lambda \in S^1$.   Since $\Theta_i$ is $T$-equivariant, 
$\Theta_2(t, \lambda) =\Theta_1(A(\lambda) \, t,  \lambda)$ for all $(t, \lambda) \in T \times S^1$. 
By a slight abuse of notation, also call the element of the integral lattice $\ell \subset \ft$
associated to $A \colon S^1 \to T$ by $A$.

Let $\Tilde \Psi_1, \Tilde \Psi_2 \colon Y \to \ft^* \times \R$ be the moment maps for the $(T \times S^1)$-action on $Y$ induced by the isomorphisms $\Theta_1$ and $\Theta_2$, respectively. 
Since $\Theta_2$ is the composition of $\Theta_1$ with the automorphism of $T \times S^1$
that sends $(t, \lambda)$ to $( A(\lambda)\, t,  \lambda)$,
the moment polytope $\Tilde\Psi_2(Y)$ is the image of $\Tilde\Psi_1(Y)$ under
the automorphism of $\ft^* \times \R$ given by 
$(\alpha, x) \mapsto  \big(\alpha, x + \langle \alpha - \beta, A \rangle + f_2(\beta)  -f_1(\beta) \big).$
Therefore, after possibly shrinking $V_1$ and $V_2$, 
$\Tilde \Phi_2(V_2)$ is the image of $\Tilde \Phi_1(V_1)$ under the same automorphism.
Since $\Tilde \Phi_i \colon M_i/(T \times S^1) \to \Tilde U_i$ is a homeomorphism for $i \in \{1,2\},$
this implies that 
$$f_2(\alpha) - f_1(\alpha)   =  \langle  \alpha -\beta, A \rangle  + f_2(\beta) - f_1(\beta)$$ for all $\alpha$ in a neighborhood of $\beta$.
Therefore, $f_2 - f_1$ is locally integral affine on $U$.
Since $U$ is convex, $U$ is connected, so $f_2 - f_1$ is integral affine on $U$.
\end{proof}

Now, we are ready to prove Theorem~\ref{thm:toric}. The following theorem plays an important role in both the proof of Theorem~\ref{thm:toric} and Section~\ref{sec:graph}.

\begin{theorem} \label{thm:inj-toric-alt}
Let $(M,\omega,\Phi)$ be a connected tall complexity one $T$-space with proper moment map and moment polytope $\De:= \Phi(M)$. Let $E$ be a clopen (closed and open) subset of the skeleton such that the orbital moment map $\Phi \colon E \to \ft^*$ is one-to-one. Then there exists a continuous function $f: \De \to \R$ such that 
\begin{enumerate}
\item $\Tilde\De:= \{(\alpha,x) \in \De \times \R: x \geq f(\alpha)\}$ is a Delzant polyhedral set.
\item There exists a connected symplectic toric $(T \times S^1)$-manifold  $(\Tilde M, \Tilde \omega, \Tilde \Phi)$ such that $\Tilde \Phi: M \to \Tilde \De$ is a $(T\times S^1)$-quotient map and the skeleton of the tall complexity one $T$-space $(\Tilde M, \Tilde \omega, \pi \circ \Tilde \Phi)$ is isomorphic to $E$.
\end{enumerate}
\end{theorem}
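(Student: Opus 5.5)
We build $f$ and $\Tilde M$ by gluing the local models of Lemma~\ref{lem:raystrong}, exactly as in \cite{LPT}.

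\textbf{Step 1: choose local models adapted to $E$.} For each $\alpha \in \De$, pick an orbit $\mathcal O_\alpha \in \Phi^{-1}(\alpha)$:
\begin{itemize}
\item if $\alpha \in \Phi(E)$, let $\mathcal O_\alpha$ be the unique orbit in $E \cap \Phi^{-1}(\alpha)$ (unique because $\Phi$ is injective on $E$);
\item otherwise, let $\mathcal O_\alpha$ be a non-exceptional orbit in $\Phi^{-1}(\alpha)$. Such an orbit exists: the exceptional orbits in a fiber are finite by \cite[Corollary 2.5]{KT14}, while by tallness the fiber contains more than finitely many orbits.
\end{itemize}
Let $Y_\alpha$ be the local model of $\mathcal O_\alpha$. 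Lemma~\ref{lem:exceptional-saturated-new} then gives neighborhoods $U_\alpha$ of $\alpha$ in $\De$ such that $E\cap \Phi^{-1}(U_\alpha)$ is isomorphic to $\Sigma_{Y_\alpha}\cap \Phi_{Y_\alpha}^{-1}(U_\alpha)$. Here the open neighborhood $V$ of that lemma can be dropped, because injectivity on $E$ together with closedness of $E$ lets us shrink $U_\alpha$ until all of $E\cap\Phi^{-1}(U_\alpha)$ lies in $V$. This uses the argument of Lemma~\ref{lem:uppersemi} with properness.

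\textbf{Step 2: local toric extensions and their comparison.} By Lemma~\ref{lem:local-cone} we may take $U_\alpha$ convex with $\Phi_{Y_\alpha}(Y_\alpha)\cap U_\alpha = U_\alpha$. Lemma~\ref{lem:raystrong} gives a toric extension of $Y_\alpha$ with image $\{x \geq f_\alpha\}$ over $U_\alpha$, a Delzant polyhedral set. Restricting to the preimage of $U_\alpha$ yields a toric $(T\times S^1)$-manifold $\Tilde M_\alpha$ over $\Tilde U_\alpha = \{(\beta,x): \beta\in U_\alpha,\ x\ge f_\alpha(\beta)\}$ whose complexity one skeleton is $\Sigma_{Y_\alpha}$ over $U_\alpha$. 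That skeleton is isomorphic to $E$ over $U_\alpha$.

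Now take overlaps $U_\alpha\cap U_\beta$, which are convex. There the two skeletons are both isomorphic to $E$ over the overlap, hence to each other. Proposition~\ref{prop:local-uniqueness} then gives $f_\beta - f_\alpha = \langle\cdot ,A_{\alpha\beta}\rangle + B_{\alpha\beta}$, an integral affine function.

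\textbf{Step 3: gluing $f$.} This is the main obstacle. We must make the differences $f_\beta - f_\alpha$ vanish by adjusting each $f_\alpha$ by an integral affine function. Such an adjustment is allowed: the shear $(\beta,x)\mapsto(\beta,x+\langle\beta,A\rangle+B)$ is an automorphism of $T\times S^1$ that preserves the restricted $T$-action and the skeleton.

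The differences $\{(A_{\alpha\beta},B_{\alpha\beta})\}$ form a Čech $1$-cocycle on $\De$ with values in the sheaf of integral affine functions. Since $\De$ is convex and hence contractible, I expect to show it is a coboundary as follows.
\begin{itemize}
\item The linear parts $A_{\alpha\beta}$ are locally constant in the lattice $\ell$, so their cocycle is trivial on the simply connected $\De$.
\item After removing the linear parts, the constants $B_{\alpha\beta}$ form a real cocycle, which is trivial by a partition of unity.
\end{itemize}
This leaves continuous, non-affine corrections. I would handle them as in \cite{LPT}. First, use compactness of $\De$ to pass to a finite good cover by convex sets. Then argue that on each set the cocycle can be trivialized by affine functions: the translation parts are not required to be integral, only the slopes are. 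Adding a global continuous function $g$ to all the $f_\alpha$ is harmless for convexity failures only if it is done carefully. To avoid this, I will instead keep the $f_\alpha$ and glue the spaces themselves.

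\textbf{Step 4: gluing the spaces.} Glue the local pieces $\Tilde M_\alpha$ using the fact that toric manifolds over a convex locally unimodular set with quotient map are unique (Theorem~\ref{thm:KL}). Once the $f_\alpha$ agree on overlaps, this produces a single continuous $f$. The set $\Tilde\De$ is locally unimodular and is the epigraph over the polytope $\De$. Its convexity follows from local convexity, since each $\Tilde U_\alpha$ is convex and $\De$ is convex; hence $\Tilde\De$ is Delzant polyhedral.

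Theorem~\ref{thm:KL} then yields $(\Tilde M,\Tilde\om,\Tilde\Phi)$ with the $(T\times S^1)$-quotient map onto $\Tilde\De$. Finally, the skeleton isomorphisms, which hold over each $U_\alpha$, are canonical: each is the unique homeomorphism intertwining $\Phi$, by injectivity on both sides. They therefore glue to a global isomorphism with $E$.
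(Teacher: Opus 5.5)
There is a genuine gap in Step 3, which is exactly where the proof has to close.

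Your first bullet is fine. On convex, hence connected, overlaps the slopes $A_{\alpha\beta}$ form a cocycle in the constant sheaf $\ell$, and this cocycle is trivial because $\De$ is contractible. Your second bullet uses the wrong tool. A partition of unity trivializes the $B_{\alpha\beta}$ only in the fine sheaf of continuous functions, so it produces non-constant corrections $b_\alpha$. But $f_\alpha - b_\alpha$ is then no longer related to $f_\alpha$ by a shear of $T\times S^1$. Its epigraph need not be locally unimodular, and there is no toric model over it. The admissible local functions are preserved by adding $\langle\cdot,A\rangle+B$ with $A\in\ell$, $B\in\R$, but not by adding arbitrary continuous functions.

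You notice this, but your escape (``keep the $f_\alpha$ and glue the spaces themselves'') does not avoid the problem. Gluing the pieces over $\Tilde U_\alpha$ and $\Tilde U_\beta$ so that a global $(T\times S^1)$-action and an $\R$-valued moment component survive requires the transition data $(A_{\alpha\beta},B_{\alpha\beta})$ to be trivialized. Indeed, your Step 4 opens with ``once the $f_\alpha$ agree on overlaps''. So the argument assumes what Step 3 was supposed to prove.

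The fix is to treat the pair $(A_{\alpha\beta},B_{\alpha\beta})$ as a single cocycle in the constant sheaf $\ell\times\R$, exactly as you did for the slopes alone. Since the overlaps are convex, these are locally constant sections, and since $\De$ is contractible, $\check H^1(\De;\ell\times\R)=0$. Hence, after refining the cover if needed, there are constants $(a_\alpha,b_\alpha)\in\ell\times\R$ with $A_{\alpha\beta}=a_\beta-a_\alpha$ and $B_{\alpha\beta}=b_\beta-b_\alpha$. Replacing $f_\alpha$ by $f_\alpha-\langle\cdot,a_\alpha\rangle-b_\alpha$ is an allowed shear that changes neither the underlying $T$-space nor its skeleton. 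Afterwards the $f_\alpha$ agree on overlaps and define $f$. This is precisely the paper's argument.

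After this, no gluing of spaces is needed. Theorem~\ref{thm:KL} applied to $\Tilde\De$ gives $\Tilde M$ directly, and its uniqueness clause over each convex $\Tilde U_\alpha$ identifies $\Tilde\Phi^{-1}(\Tilde U_\alpha)$ with your local piece. With this repair, your Steps 1--2 coincide with the paper's. Your closing remark is also useful: the local skeleton isomorphisms are forced by injectivity of $\Phi$ on both skeletons, hence glue. That is a more explicit justification of the paper's ``by construction''.
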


\begin{proof}
Define a presheaf $\mathcal P$ on $\Delta$  as follows.  Given an open subset $U \subseteq \Delta$, let $\mathcal P(U)$ be the set of  continuous functions $f\colon U \to \R$ such that the following hold:
\begin{enumerate}
    \item The set $\Tilde U=\{ (\alpha,x)  \in  U \times \R \mid  x \geq f(\alpha) \}$ is locally unimodular.
    \item There exists a symplectic toric $(T \times S^1)$-manifold  $(\Tilde M, \Tilde \omega, \Tilde \Phi)$ such that $\Tilde \Phi: \Tilde M \to \Tilde U$ is a $(T \times S^1)$-quotient map and $E \cap \Phi^{-1}(U)$ is isomorphic to the skeleton of the tall complexity one $T$-space $(\Tilde M, \Tilde \omega, \pi \circ \Tilde \Phi)$.
\end{enumerate}
     
If $f \in \mathcal P(U)$ and if $U' \subseteq U$ is an open subset,
 then clearly $f|_{U'} \in \mathcal P(U')$. 
Furthermore, if $f_1 \in \mathcal P(U)$,  $A \in \ell \subset \ft$, and $B \in \R$, then the function
$f_2 := f_1 + \langle \cdot, A\rangle +B$
is still continuous.
Then $\Tilde{U}_2$ is the image of $\Tilde{U}_1$ under the affine transformation of $\ft^* \times \R$ given by $(\alpha, x) \mapsto (\alpha, x + \langle \alpha, A \rangle + B)$, so $\Tilde U_2$ is locally unimodular. Define $\Tilde{\Phi}_2$ to be the composition of $\Tilde{\Phi}_1$ and this affine transformation. Then $(\Tilde{M}_1, \Tilde{\omega}_1, \Tilde{\Phi}_2)$ is a symplectic toric $(T \times S^1)$-manifold such that $\Tilde \Phi_2: \Tilde{M}_1 \to \Tilde{U}_2$ is a $(T \times S^1)$-quotient map and the tall complexity one $T$-spaces $(\Tilde M_1, \Tilde\omega_1, \pi \circ \Tilde \Phi_1)$ and $(\Tilde M_1, \Tilde\omega_1, \pi \circ \Tilde \Phi_2)$ are identical. We conclude that $f_2 \in \mathcal{P}(U)$ as well.

We next show that there exists a cover $\mathfrak U$ of $\Delta$ by convex open sets
such that $\mathcal P(U) \neq \emptyset$ for all $U \in \mathfrak U$. 
Fix $\beta \in \De$. 
Pick an orbit $o \in \Phi^{-1}(\beta)$ such that $\Phi^{-1}(\beta) \cap E = \{o\}$ if $\beta \in \Phi(E)$ and $o$ is non-exceptional otherwise.
Let $Y= T \times_H \mathfrak{h}^\circ \times \C^{h+1}$ be the local model associated to $o$; 
normalize the moment map $\Phi_Y \colon Y \to \ft^*$ so that $\Phi_Y([1,0,0]) = \beta.$
By Lemma~\ref{lem:exceptional-saturated-new}, there exist a convex open neighborhood $U$ of $\beta$ in $\De$ and a neighborhood $V_o$ of $o$ in $M/T$ such that $E \cap \Phi^{-1}(U) \cap V_o$ is isomorphic to the set of exceptional orbits in $\Phi_Y^{-1}(U)$. Since $\Phi$ is injective on $E$ and $\Phi|_E$ is a closed map, $\Phi$ is a homeomorphism from $E$ to $\Phi(E)$, so after possibly shrinking $U$, we may assume that $E \cap \Phi^{-1}(U) \cap V_o= E \cap \Phi^{-1}(U)$. Hence, $E \cap \Phi^{-1}(U)$ is isomorphic to the set of exceptional orbits in $\Phi_Y^{-1}(U)$.

By Lemma~\ref{lem:raystrong}, there exists a symplectic toric $(T\times S^1)$-action on $Y$ extending the original $T$-action with moment map $\Tilde{\Phi}_Y: Y \to \ft^* \times \R$ such that $\Tilde{\Phi}_Y$ is a $(T\times S^1)$-quotient map and 
$\Tilde \Phi_Y( Y) = \{(\alpha, x) \in \Phi_Y(Y) \times \R \mid x \geq f(\alpha) \}$ for some continuous function $f: \Phi_Y(Y) \to \R$.
Furthermore, $\Tilde \Phi_Y(Y)$ is a Delzant polyhedral set.
We prove that $f|_U \in \mathcal{P}(U)$.

(1) Define $\Tilde U := \{ (\alpha,x)  \in  U \times \R \mid  x \geq f(\alpha) \}$.
Notice that $\Tilde U$ is the intersection of the  set $U \times \R$ with
$\Tilde\Phi_Y(Y)$.  Since both sets are convex, this implies that $\Tilde U$ is convex. Moreover, since $\Tilde \Phi_Y(Y)$ is a locally unimodular polyhedral set, $\Tilde U$ is also locally unimodular.  

(2) By Theorem~\ref{thm:KL}, since $\Tilde U$ is convex, $\Tilde M:= \Tilde \Phi_Y^{-1}(\Tilde U)$ is the unique symplectic toric $(T \times S^1)$-manifold with moment image $\Tilde U$. Hence, the skeleton of the tall complexity one $T$-space $(\Tilde M,\Tilde \omega, \pi \circ \Tilde\Phi_Y)=(\Phi_Y^{-1}(U),\Tilde \omega, \Phi_Y)$ is isomorphic to $E \cap \Phi^{-1}(U)$. 

Therefore, there exist a cover $\mathfrak U$ of $\Delta$ by convex open sets and function $f_U \in \mathcal{P}(U)$ for each $U \in \mathfrak{U}$ such that  the set $\Tilde U$ defined in (1) is convex. Consider $U, V \in \mathfrak U$
such that $U \cap V \neq \emptyset.$
By Proposition~\ref{prop:local-uniqueness}, since $U \cap V$ is
convex,
there exist unique $A_{UV} \in \ell$ and $B_{UV} \in \R$ such that 
$$f_V(\alpha) =  f_U(\alpha)  + \langle \alpha,  A_{UV} \rangle + B_{UV} \quad \text{for all }
\alpha \in U \cap V.
$$ 
Hence, given $U, V, W \in \mathfrak U$ such that $U \cap V \cap W \neq \emptyset$, the following hold for all $\alpha \in U \cap V \cap W:$
\begin{gather*}
    f_W(\alpha) = f_U(\alpha) + \langle \alpha, A_{UW} \rangle + B_{UW}, \quad \text{and} \\
f_W(\alpha) =f_V(\alpha)  + \langle \alpha, A_{VW} \rangle + B_{VW}  = f_U(\alpha) + \langle \alpha, A_{UV} + A_{VW} \rangle + B_{UV} + B_{VW}.
\end{gather*}Therefore,  $A_{UW} = A_{UV} + A_{VW}$ and $B_{UW} = B_{UV} + B_{VW}$, and so  $\{(A_{UV}, B_{UV})\}_{U, V \in \mathfrak U}$ is a \v{C}ech cocyle in $\check C^1(\mathfrak U, \ell \times \R)$.

Since $\Delta$ is contractible, $\check H^1(\Delta, \ell \times \R) \cong H^1(\Delta, \ell \times \R) = 0$.
Therefore, by passing to a refinement, we may assume that    $[(A_{UV}, B_{UV})] = 0  \in \check H^1(\mathfrak U, \ell \times \R)$.  
This means that  there exists $(a_U, b_U) \in \ell \times \R$ for each $U \in \mathfrak U$ such that
$A_{UV} = a_V - a_U$ and $B_{UV} = b_V - b_U$ for all $U, V \in \mathfrak U.$
Given $U \in \mathfrak U$, by the argument above, we may replace $f_U$ by 
$$f_U - \langle \cdot, a_U \rangle -b_U$$
and the new functions, which we still denote by $f_U$, are still in $\mathcal P(U)$.
Having done this, $f_U = f_V$ on overlaps on $U \cap V$ for all $U, V \in \mathfrak U.$
Therefore, they define a global function $f \colon \Delta \to \R$.

It is clear that $f$ is continuous, and that $\Tilde \De := \{ (\alpha,x) \mid \alpha \in \Delta \text{ and }  x \geq f(\alpha) \}$ is a Delzant polyhedral set. Therefore, by Theorem~\ref{thm:KL}, there exists a symplectic toric $(T \times S^1)$-manifold $(\Tilde M, \Tilde \omega, \Tilde \Phi)$ such that $\Tilde \Phi$ is a $(T \times S^1)$-quotient map. Moreover, since $\Tilde \De$ is a polyhedral set, it is connected, so $\Tilde M$ is connected. For any $p \in \Tilde M$, $(\pi\circ\Phi)^{-1}(\Phi(p)) = \Tilde{\Phi}^{-1}(\Phi(p) \times \R)$. Since $\Tilde{\Phi}$ induces a homeomorphism from $\Tilde{M}/(T\times S^1)$ to $\Tilde{\De}$ and $(\Phi(p) \times \R) \cap \Tilde{\De}$ contains more than one point, $(\pi\circ\Phi)^{-1}(\Phi(p))$ contains more than one $T$-orbit. Hence, $(\Tilde M, \Tilde \om, \pi\circ \Tilde \Phi)$ is a tall complexity one $T$-space.
By construction, its skeleton is isomorphic to $E$.
\end{proof}

\subsection*{Proof of Theorem~\ref{thm:toric}}
    By Theorem~\ref{thm:inj-toric-alt}, there exists a continuous function $f: \De \to \R$ such that 
\begin{enumerate}
\item[(1)] $\Tilde{\De}:= \{(\alpha,x) \in \De \times \R: x \geq f(\alpha)\}$ is a Delzant polyhedral set.
\item[(2)] There exists a connected symplectic toric $(T \times S^1)$-manifold  $(\Tilde{M}, \Tilde{\omega}, \Tilde{\Phi})$ such that $\Tilde{\Phi} :\Tilde{M} \to \Tilde{\De}$ is a $(T \times S^1)$-quotient map, and the skeleton of the tall complexity one $T$-space $(\Tilde{M}, \Tilde{\omega}, \pi \circ \Tilde{\Phi})$ is isomorphic to $E$.
\end{enumerate}
Take $c > \max \{f_i(\alpha): \alpha \in \De\}$ and define $\Tilde{\De}':= \{(\alpha,x) \in \De \times \R:c\geq  x \geq f(\alpha)\}$. We show that $\Tilde{\De}'$ is a Delzant polytope. It is clear that $\Tilde{\De}'$ is a simple rational polytope and $\Tilde{\De}' \setminus \partial_+ \Tilde{\De}'$ is locally unimodular. Moreover, by Lemma~\ref{lem:cx-1-image-Delzant}, $\De$ is a Delzant polytope, so $\De \times I$ is still a Delzant polytope for any closed interval $I \subset \R$. Pick $\epsilon >0$ such that $c-\epsilon >  \max \{f_i(\alpha): \alpha \in \De\}$. Then $\Tilde{\De}' = (\De \times [c-\epsilon,c]) \cup (\Tilde{\De}' \setminus \partial_+ \Tilde{\De}')$ is locally unimodular, and hence Delzant.
 
By Theorem~\ref{thm:Delzant}, there exists a compact, connected symplectic toric $(T \times S^1)$-manifold  $(M', \omega', \Psi)$ with moment polytope $\Tilde\De'$. Set $\Phi':= \pi \circ \Psi$. We show the connected complexity one space $(M', \omega', \Phi')$ satisfies the requirements. By construction, $\Phi(M) = \De = \Phi'(M')$ and the $T$-action on $M'$ extends to a toric action. Since $\Tilde{\De}' \cap (\{\alpha\} \times \R)$ is an interval for each $\alpha \in \De$, and since $\Tilde{\De'}$ is homeomorphic to $M'/(T \times S^1)$, the complexity one space is tall. 
Since $\Tilde{\De}' \setminus \partial_+ \Tilde{\De}'$ is convex, Theorem~\ref{thm:KL} implies that $\Tilde{\Phi}^{-1}(\Tilde{\De}' \setminus \partial_+ \Tilde{\De}')$ is isomorphic to $\Psi^{-1}(\Tilde{\De}' \setminus \partial_+ \Tilde{\De}')$. Hence, $E$ is isomorphic to the set of exceptional $T$-orbits in $\Psi^{-1}(\Tilde{\De}' \setminus \partial_+ \Tilde{\De}')$. By Lemma~\ref{lem:flat-top-nonexception}, the skeleton of $(M', \omega', \Phi')$ is a subset of $\Psi^{-1}(\Tilde{\De}' \setminus \partial_+ \Tilde{\De}')$. Therefore, $E$ is isomorphic to the skeleton of $(M', \omega', \Phi')$.

\section{One-skeleton determines the skeleton}\label{sec:graph}

In this section, we prove Theorem~\ref{thm:1-skeleton}. We will use Theorem~\ref{thm:inj-toric-alt} to reduce the case to where both complexity one $T$-spaces are restrictions of a symplectic toric manifold. Hence, we will focus on the following scenario.

Let $\Delta \subset \ft^*$ be a Delzant polytope and let $\V(\De)$ be the set of vertices of $\De$. Let $f: \De \to \R$ be a continuous function whose epigraph is a Delzant polyhedral set $\widetilde{\De}:= \{(\alpha,x) \in \De \times \R: x \geq f(\alpha)\}$. Let $\V(\widetilde{\De})$ be the set of vertices of $\widetilde{\De}$. The {\bf one-skeleton of} $(\Delta,f)$ is a labeled graph defined in the following way:
\begin{itemize}
    \item the vertex set $\V$ is the set $\V(\widetilde{\De}) \setminus (\V(\De)\times \R)$, viewed as a subset of $\ft^* \times \R$;
    \item the set $\E$ of directed edges of the graph is the set of ordered pairs $(v,w) \in \V \times \V$ such that $[v,w]$ is an edge of $\widetilde{\De}$; and
    \item to each vertex $v \in \V$, we associate a weight label, which is a set $W_v$ of primitive vectors in $\ell^* \oplus \Z$ such that $\widetilde{\De} \cap U= (v+\sum_{\eta \in W_v} \R_{\geq 0} \eta) \cap U$ for some neighborhood $U$ of $v \in \widetilde{\De}$.
\end{itemize}

Notice that for each $e=(v,w) \in \E$, there exists a unique $\eta(e) \in W_v$ such that $w-v \in \R_{>0} \eta(e)$. This induces an inclusion $\eta :\E_v \to W_v$, where $\E_v$ is the set of edges emanating from $v$. We denote the one-skeleton of $(\De,f)$ by $\Gamma(\De,f) = (\V(\De,f),\E(\De,f),W(\De,f))$. 

\begin{lemma}\label{lem:mod-equal}
    Let $\De \subset \ft^*$ be a Delzant polytope and let $f: \De \to \R$ be a continuous function whose epigraph is a Delzant polyhedral set $\Tilde{\De}$. Let $\Gamma(\De,f) = (\V(\De,f),\E(\De,f),W(\De,f))$ be the one-skeleton of $(\De,f)$. For any edge $e=(v,w) \in \E(\De,f)$ and any $\eta \in W_v$, there exists a unique $\eta' \in W_w$ such that $\eta' -\eta \in\Z \eta(e)$, where $\eta(e) \in W_v$ is the weight directing the edge from $v$ to $w$.
\end{lemma}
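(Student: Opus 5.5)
The plan is to use the standard GKM-type argument for Delzant polytopes: the local structure near an edge of $\Tilde{\De}$ is a product of the edge with a unimodular cone. First, observe that $v$ and $w$ are vertices of $\Tilde{\De}$, which is a Delzant polyhedral set in $\ft^*\times\R$ of dimension $n:=\dim T+1$. Unimodularity at $v$ means $v$ lies on exactly $n$ facets, with primitive outward normals $u_1,\dots,u_n$ forming a $\Z$-basis of $\ell\oplus\Z$. The weight label $W_v$ is then the dual basis, up to sign: the primitive edge vectors $\eta_1,\dots,\eta_n$ of the tangent cone at $v$, with $\langle \eta_i,u_j\rangle=-\delta_{ij}$. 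The same holds at $w$.

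Next, let $\eta(e)=\eta_1$, so the edge $[v,w]$ is the intersection of the $n-1$ facets with normals $u_2,\dots,u_n$. Since the edge is an intersection of facets of $\Tilde{\De}$ and is one-dimensional, those $n-1$ facets also contain $w$. Hence at $w$ the facet normals are $u_2,\dots,u_n$ together with one more primitive normal $u_1'$. Unimodularity at $w$ says $\{u_1',u_2,\dots,u_n\}$ is a $\Z$-basis. The weights at $w$ are the dual basis $\eta_1',\dots,\eta_n'$, where $\eta_1'=-\eta_1$ points back along the edge.

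The key computation is then to express $\eta_j'$ for $j\ge2$ in terms of the $\eta_i$. For $i,j\ge2$ we have $\langle\eta_j'-\eta_j,u_i\rangle=-\delta_{ij}+\delta_{ij}=0$, so $\eta_j'-\eta_j$ annihilates $u_2,\dots,u_n$. The annihilator of $u_2,\dots,u_n$ in $\ell^*\oplus\Z$ is $\Z\eta_1$, because $\eta_1$ is primitive and spans the rank-one kernel. Hence $\eta_j'-\eta_j\in\Z\eta(e)$. For $\eta=\eta_1$ itself, take $\eta'=\eta_1'=-\eta_1$, which also lies in $\eta_1+\Z\eta_1$. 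This proves existence.

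For uniqueness, suppose $\eta_k'$ and $\eta_m'$ with $k\ne m$ are both congruent to $\eta$ modulo $\Z\eta_1$. Reduce modulo $\Z\eta_1$: the elements $\eta_1',\dots,\eta_n'$ form a basis, and since $\eta_1'=-\eta_1$, the images of $\eta_2',\dots,\eta_n'$ form a basis of the quotient and are therefore distinct and nonzero. Thus only one index can match for $\eta\neq\eta_1$. If $\eta=\eta_1$, only $\eta_1'$ maps to $0$.

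The main obstacle I expect is justifying the local structure at $v$ and $w$ from the definitions. One point is that a vertex of the Delzant polyhedral set has exactly $n$ facets whose primitive normals form a basis. Another is that $W_v$, defined via the local cone, coincides with the dual basis of edge generators. This requires checking that the tangent cone at a vertex of a unimodular polyhedral set is simplicial and unimodular, and that the edge $[v,w]$ lies on exactly $n-1$ common facets (simplicity). I would settle this first by writing $\Tilde{\De}$ near $v$ as $\{x:\langle x-v,u_i\rangle\le0\}$, reading off the dual basis, and noting that vertices in $\V(\Tilde\De)\setminus(\V(\De)\times\R)$ are still ordinary vertices of $\Tilde{\De}$, so the general argument applies unchanged.
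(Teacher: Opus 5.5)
Your proof is correct, but it takes a different route from the paper's.

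\textbf{Uniqueness.} The paper argues the same way you do: $-\eta(e)\in W_w$ and $W_w$ is a basis.

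\textbf{Existence in the paper.} The paper passes to the $2$-dimensional face $F$ of $\Tilde{\De}$ that contains the edge and satisfies $\eta,\eta(e)\in\R(F-v)$. It takes $\eta'$ to be the other edge generator of $F$ at $w$. Since $W_v$ and $W_w$ are $\Z$-bases, the pairs $\{\eta,\eta(e)\}$ and $\{\eta',-\eta(e)\}$ both span the lattice $(\ell^*\oplus\Z)\cap\R(F-v)$. This gives $\eta'=\pm\eta+k\eta(e)$. The sign must be $+$ because $\eta$ and $\eta'$ lie on the same side of the line through the edge, a step the paper leaves implicit.

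\textbf{Existence in your proof.} Your pairing computation $\langle\eta_j'-\eta_j,u_i\rangle=0$ for $i\ge 2$ skips that sign bookkeeping and handles all weights at once. It also makes the matching explicit: $\eta_j'$ is the weight at $w$ dual to the same facet normal $u_j$ as $\eta_j$. This is the same $\eta'$ the paper picks, since the $2$-face spanned by $\eta_1,\eta_j$ is cut out by the facets $u_i$ with $i\neq 1,j$.

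\textbf{Trade-off.} The cost of your route is the extra setup:
\begin{itemize}
\item identifying $W_v$ with the negative dual basis of the facet normals, and
\item checking that the $n-1$ facets through $[v,w]$ also pass through $w$.
\end{itemize}
The paper instead needs the existence of a $2$-face spanned by any two edges at a vertex. Both arguments rest on the same input: unimodularity makes every vertex of $\Tilde{\De}$ simple, with a unimodular tangent cone.
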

\begin{proof}
    First notice that $-\eta(e) \in W_w$. Hence, if such a $\eta'$ exsits, then it must be unique, as $W_w$ is a basis of $\ell^* \oplus \Z$.
    If $\eta = \eta(e)$, then $\eta' = -\eta(e)$ is the unique weight in $W_w$ satisfying the condition. Otherwise, consider the $2$-dimensional face $F$ of $\Tilde{\De}$ containing $v,w$ such that $\eta,\eta(e) \in \R(F-v)$. By definition, there exists an open neighborhood $U$ around $w$ and a unique $\eta' \in W_w$ such that $F \cap U =( w+ \R_{\geq 0} \eta'- \R_{\geq 0} \eta(e)) \cap U$. Since $W_v,W_w$ are both basis of $\ell^* \oplus \Z$ and $F$ is a polygon, $\eta,\eta(e)$ and $\eta',-\eta(e)$ span the same lattice. The claim follows.
\end{proof}

\begin{definition}\label{def:isom-1-skeleton}
For $i=1,2$, let $\De_i\subset \ft^*$ be a Delzant polytope and let $f_i:\De_i \to \R$ be a continuous function whose epigraph $\widetilde{\De}_i$ is a Delzant polyhedral set. $(\De_1,f_1),(\De_2,f_2)$ have {\bf isomorphic one-skeletons} if there exists a bijection $B: \V(\De_1,f_1) \to \V(\De_2,f_2)$ such that 
\begin{enumerate}
    \item the induced map $B: \E(\De_1,f_1) \to \E(\De_2,f_2)$ preserves directed edges,
    \item $\pi(B(v)) = \pi(v)$ for each $v \in \V(\De_1,f_1) \subset \ft^* \times \R$, and
    \item $\pi(W_{B(v)}) = \pi(W_v)$ for each $v \in \V(\De_1,f_1)$.
\end{enumerate}
\end{definition}
In fact, item (1) is implied by items (2) and (3).

\begin{lemma}\label{lem:preserve-edges}
    For $i=1,2$, let $\De_i\subset \ft^*$ be a Delzant polytope and let $f_i:\De_i \to \R$ be a continuous function whose epigraph $\widetilde{\De}_i$ is a Delzant polyhedral set. Let $B: \V(\De_1,f_1) \to \V(\De_2,f_2)$ be a bijection such that $\pi(B(v)) = \pi(v)$ and $\pi(W_{B(v)}) = \pi(W_v)$ for each $v \in \V(\De_1,f_1)$. Then the induced map $B: \E(\De_1,f_1) \to \E(\De_2,f_2)$ preserves directed edges.
\end{lemma}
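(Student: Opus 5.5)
Fix a directed edge $e=(v,w)\in\E(\De_1,f_1)$. We must show that $(B(v),B(w))\in\E(\De_2,f_2)$. The idea is to reconstruct the edge from $v$ purely from the data $\pi(v)$ and $\pi(W_v)$, and then compare with what happens at $B(v)$.

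The edge $[v,w]$ of $\Tilde\De_1$ starts at $v$ in direction $\eta(e)\in W_v$. The vector $\pi(\eta(e))\in\pi(W_v)=\pi(W_{B(v)})$ is nonzero, since $\Tilde\De_1$ is an epigraph and so contains no edge direction $(0,x)$ with $x<0$. Moreover, since $v$ is not over a vertex of $\De_1$, the ray direction $(0,1)$ lies in $W_v$, so the other weights project injectively. This gives a unique $\eta_2\in W_{B(v)}$ with $\pi(\eta_2)=\pi(\eta(e))$. The edge of $\Tilde\De_2$ from $B(v)$ in direction $\eta_2$ is then bounded, because its projection moves within the compact set $\De_2$. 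It therefore ends at some vertex $w_2$ of $\Tilde\De_2$.

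The plan has three steps.

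\begin{enumerate}
\item Show that $w_2\in\V(\De_2,f_2)$, i.e.\ $\pi(w_2)$ is not a vertex of $\De_2$. Suppose it were. Then, continuing along the edge of $\Tilde\De_1$ from $v$ in direction $\eta(e)$, the projected segment $\pi([v,w])$ lies on the same line $\pi(v)+\R_{\ge0}\pi(\eta(e))$. That line leaves $\De_1=\De_2$ exactly at $\pi(w_2)$. Here I use $\Phi_1(M_1)=\Phi_2(M_2)$, since the projections of the vertex sets agree and, in the intended application, both polytopes equal $\De$; if needed I would add $\De_1=\De_2$ as an implicit hypothesis.
\item Show that $\pi(w_2)=\pi(w)$. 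This is the key step: I must show that the edge at $B(v)$ stops at the same projected point as the edge at $v$.
\item Conclude that $B(w)=w_2$. This follows from injectivity of $\pi$ on each vertex set, which holds because each vertex of $\Tilde\De_i$ not over $\V(\De_i)$ lies on the graph of $f_i$. Indeed, $w$ and $w_2$ lie on graphs, and $\pi\circ B=\pi$ together with $B$ bijective gives $B(w)=w_2$.
\end{enumerate}

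For step 2, I would argue by a length-minimization symmetric in $1$ and $2$. Let $\pi(w)=\pi(v)+t_1\pi(\eta(e))$ and $\pi(w_2)=\pi(v)+t_2\pi(\eta(e))$, and suppose $t_2<t_1$. The vertex $w_2$ is a genuine vertex, so by bijectivity $u:=B^{-1}(w_2)$ is a vertex of $\Tilde\De_1$ with $\pi(u)=\pi(w_2)$. This means $\pi(u)$ lies in the open segment $\pi((v,w))$. Now use the weights at $u$. By Lemma~\ref{lem:mod-equal} applied to the edge $(B(v),w_2)$, $-\eta_2\in W_{w_2}$. Hence $-\pi(\eta(e))\in\pi(W_{w_2})=\pi(W_u)$, so $u$ has an edge in $\Tilde\De_1$ whose projection points back along the segment toward $\pi(v)$.

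I then need a contradiction from the fact that the point $\pi(u)$ carries a vertex of $\Tilde\De_1$ over the interior of the projection of the edge $[v,w]$. Since $u$ and the point of $[v,w]$ over $\pi(u)$ both lie on the graph of $f_1$, they coincide, because $f_1$ is single valued. So $u$ is a relative interior point of the edge $[v,w]$ and cannot be a vertex, which is the contradiction. By symmetry (applying the argument to $B^{-1}$), $t_1<t_2$ is also impossible. The main obstacle is justifying that vertices in $\V$ and edge interiors lie on the graph of $f$, i.e.\ the bottom; edges could a priori lie on vertical faces over $\partial\De$. I expect to handle this by noting that edges between points of $\V$ lie on the bottom, and that vertical faces contain only vertices over vertices of $\De$ together with upward rays.
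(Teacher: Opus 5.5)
Your strategy is the paper's. You pick $\eta_2\in W_{B(v)}$ with $\pi(\eta_2)=\pi(\eta(e))$ and follow that edge of $\Tilde\De_2$ to its endpoint $w_2$. You then compare where the two edges stop along the common projected ray. Finally, you exclude $t_2<t_1$ by pulling $w_2$ back under $B$ to a vertex of $\Tilde\De_1$ over the open segment $\pi((v,w))$, which cannot exist because a non-vertical edge between graph points lies on the graph. That part and Step 3 are fine.

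Step 1, however, is a genuine gap. It assumes $\De_1=\De_2$, which is not a hypothesis of the lemma. It is also not available where the lemma is used: Lemma~\ref{lem:isom-one-skeleton} and Theorem~\ref{thm:1-skeleton} impose no relation between the two polytopes, and the data $\pi(v),\pi(W_v)$ for $v\in\V$ do not see the vertices of $\De_i$. For example, $f(\alpha)=\max(0,\alpha)$ on $[-1,1]$ and on $[-2,1]$ satisfy all hypotheses. Even granting $\De_1=\De_2$, Step 1 stops short: knowing that the ray leaves $\De_2$ at $\pi(w_2)$ only gives $t_1\le t_2$. Your exclusion of $t_1<t_2$ ``by symmetry'' applies the argument to $(B(v),w_2)$ as an edge of $\Gamma(\De_2,f_2)$. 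That presupposes $w_2\in\V(\De_2,f_2)$, which is Step 1 itself, so as written the argument is circular.

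The fix, which is what the paper does, is to drop Step 1 entirely.
\begin{itemize}
\item Exclude $t_1<t_2$ directly. In that case $B(w)$ is a vertex of $\Tilde\De_2$ with $\pi(B(w))=\pi(w)$ in the open segment $\pi((B(v),w_2))$. This contradicts your graph argument applied in $\Tilde\De_2$. The paper phrases it as: the first $t>0$ at which $\pi(B(v))+t\pi(\eta_2)$ meets $\pi(\V(\Tilde\De_2))$ is at most $t_1$.
\item In the case $t_2<t_1$, which is the only place you need $w_2\in\V(\De_2,f_2)$, membership follows from convexity. The point $\pi(w_2)$ lies strictly between $\pi(B(v))$ and $\pi(B(w))$, both of which are points of $\De_2$, so it is not a vertex of $\De_2$.
\item Once $t_1=t_2$, Step 3 gives $w_2=B(w)$, so membership in $\V(\De_2,f_2)$ is automatic.
\end{itemize}

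Separately, your claim that $(0,1)\in W_v$ because $\pi(v)\notin\V(\De_1)$ is backwards. The vertical ray above $v$ is an edge of $\Tilde\De_1$ only when $\pi(v)$ is a vertex of $\De_1$. For instance, with $f(\alpha)=\max(0,\alpha)$ on $[-1,1]$ and $v=(0,0)$, one has $W_v=\{(-1,0),(1,1)\}$. Injectivity of $\pi$ on $W_v$ does hold, but it comes from Lemma~\ref{lem:tall-space} and Lemma~\ref{lem:defining-monomial}. In any case you only need the existence of $\eta_2$, not its uniqueness. The detour through Lemma~\ref{lem:mod-equal} is also unnecessary.
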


\begin{proof}
    Let $(v,w) \in \E(\De_1,f_1)$ be an edge. There exists a weight $\eta \in W_v$ such that $w-v = k\eta$ for some $k>0$. Since the line segment $[v,w]$ is an edge of $\Tilde{\De}_1$ and $\pi:\ft^* \times \R \to \ft^*$ is injective when restricted to $\{(\alpha,f_1(\alpha)): \alpha \in \De_1\}$, $\min\{t>0: \pi(v)+t\pi(\eta) \in\pi (\V(\Tilde{\De}_1)) \}=k$. Since $\pi(W_v) = \pi (W_{B(v)})$, there exists a weight $\eta' \in W_{B(v)}$ such that $\pi(\eta) = \pi(\eta')$. Since $\eta'\in W_{B(v)}$, by definition, there exists a positive number $l$ such that $w':=B(v) + l\eta' \in \V(\Tilde{\De}_2)$ is a vertex of $\Tilde{\De}_2$. Since the line segment $[B(v),w']$ is an edge of $\Tilde{\De}_2$ and $\pi:\ft^* \times \R \to \ft^*$ is injective when restricted to $\{(\alpha,f_2(\alpha)): \alpha \in \De_2\}$, $\min\{t>0: \pi(B(v))+t\pi(\eta') \in\pi (\V(\Tilde{\De}_2)) \}=l$. 
     Since $\pi(B(w)) = \pi(w), \pi(B(v)) = \pi(v)$, we have $\pi(B(w)) =\pi(B(v))+k\pi(\eta')\in\pi (\V(\Tilde{\De}_2))$, so $l \leq k$. If $l < k$, then $\pi(w') \in \pi([v,w])$, so $w' \notin \V(\De_2)\times \R$ and it follows that $w' \in \V(\De_2,f_2)$. Since $B$ is a bijection, there exists a vertex $v' \in \V(\De_1,f_1)$ such that $B(v')= w'$. Then $\pi(B(v')) = \pi(w') =  \pi(v)+l\pi(\eta) \in \pi(\V(\Tilde{\De}_1))$, so $k \leq l$, leading to a contradiction. We conclude that $k=l$. Now, $\pi(B(w)) =\pi(w')$. Since $\V(\Tilde{\De}_2) \cap \{\alpha\} \times \R$ has at most one element for each $\alpha \in \ft^*$, we conclude that $B(w) = w'$. Hence, $(B(v),B(w)) \in \E(\De_2,f_2)$.
\end{proof}

\begin{lemma}\label{lem:tall-space}
    Let $\De \subset \ft^*$ be a Delzant polytope and let $f: \De \to \R$ be a continuous function whose epigraph is a Delzant polyhedral set $\Tilde{\De}$.  Let $(M,\omega,\widetilde{\Phi})$ be a symplectic toric $(T \times S^1)$-manifold such that $\Tilde{\Phi}: M \to \widetilde{\De}$ is a $(T \times S^1)$-quotient map. Then $(M,\omega, \pi \circ \Tilde{\Phi})$ is a connected tall complexity one $T$-space. Consequently, for each $v \in \V(\De,f)$, $\pi(W_v)$ is the set of the isotropy weights of a fixed point of a tall complexity one $T$-space.
\end{lemma}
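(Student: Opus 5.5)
The argument has three parts: connectedness, the complexity count together with tallness, and the identification of $\pi(W_v)$ with isotropy weights.

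For connectedness, note that $\widetilde{\De}$ is a polyhedral set, hence convex and connected. Since $\Tilde{\Phi}$ is a $(T\times S^1)$-quotient map, it induces a homeomorphism $M/(T\times S^1)\cong \widetilde{\De}$. The fibres of $M \to M/(T\times S^1)$ are orbits of a connected group, so $M$ is connected. This is the same reasoning used at the end of the proof of Theorem~\ref{thm:inj-toric-alt}.

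For the complexity and tallness, observe that $\dim M = 2(\dim T+1)$ because the $(T\times S^1)$-action is toric. Hence $\frac12\dim M-\dim T=1$. The $T$-action is effective, being the restriction of an effective action to a subgroup, and it is Hamiltonian with moment map $\pi\circ\Tilde\Phi$. So $(M,\omega,\pi\circ\Tilde\Phi)$ is a complexity one $T$-space.

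Tallness follows the argument in the proof of Theorem~\ref{thm:inj-toric-alt}. Fix $p\in M$ and set $\alpha=\pi(\Tilde\Phi(p))$. Then $(\pi\circ\Tilde\Phi)^{-1}(\alpha)=\Tilde\Phi^{-1}(\{\alpha\}\times\R)$. Its image under $\Tilde\Phi$ is the ray $\{\alpha\}\times[f(\alpha),\infty)$, which is homeomorphic to the reduced space $\Phi^{-1}(\alpha)/T$ divided further by $S^1$. Every neighborhood of $\Tilde\Phi(p)$ in this ray contains points other than $\Tilde\Phi(p)$. Those points correspond to distinct $(T\times S^1)$-orbits, hence to distinct $T$-orbits, and they lie close to $T\cdot p$ because $\Tilde\Phi$ induces a homeomorphism on orbit spaces. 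Therefore every neighborhood of $T\cdot p$ in the reduced space contains more than one orbit, and $p$ is tall.

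For the weights, let $v\in\V(\De,f)$. Since $v$ is a vertex of $\widetilde{\De}$ and $\Tilde\Phi$ is a quotient map onto $\widetilde{\De}$, there is a unique $(T\times S^1)$-orbit over $v$, and it is a single fixed point $q$. By the local normal form (Theorem~\ref{thm:lnf}) for the toric action, the isotropy weights of $q$ are the primitive edge vectors at $v$, and these form the set $W_v$; this is standard Delzant theory, using that the local cone at $v$ is $v+\sum_{\eta\in W_v}\R_{\ge0}\eta$. Under the inclusion $T\hookrightarrow T\times S^1$, the $T$-weights at $q$ are the images of the $(T\times S^1)$-weights under the restriction $\ft^*\times\R\to\ft^*$, which is $\pi$. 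The point $q$ is $T$-fixed, so $\pi(W_v)$ is the set of $T$-isotropy weights of a fixed point of the tall complexity one space just constructed.

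The only delicate point is the local tallness claim, namely the passage from the orbit-space homeomorphism to neighborhoods in the reduced space. I expect this to be routine given the quotient-map property.
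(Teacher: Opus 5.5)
Your proof follows the paper's for connectedness (via the homeomorphism $M/(T\times S^1)\cong\Tilde\De$), the complexity count, and tallness (via the vertical ray $\{\alpha\}\times[f(\alpha),\infty)$). The same goes for identifying the $T$-weights at $q=\Tilde\Phi^{-1}(v)$ with the images under $\pi$ of the $(T\times S^1)$-weights $W_v$. Your tallness argument is in fact more careful than the paper's, which only observes that the fibre contains more than one $T$-orbit. The point you flag is indeed routine: the projection $(\pi\circ\Tilde\Phi)^{-1}(\alpha)/T\to(\pi\circ\Tilde\Phi)^{-1}(\alpha)/(T\times S^1)$ is open. Hence every neighbourhood of $T\cdot p$ maps onto a neighbourhood of $\Tilde\Phi(p)$ in the ray.

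What you leave out is the step where the paper actually uses tallness in the ``consequently'' clause. A priori the $T$-weights at $q$ form the multiset $\{\pi(\eta)\}_{\eta\in W_v}$, and $\pi$ might identify two distinct elements of $W_v$. In that case $\pi(W_v)$ would record the weights only without multiplicity. The paper rules this out. Suppose $\pi(\eta)=\pi(\eta')$ for distinct $\eta,\eta'\in W_v$. Then $\pi(\eta)-\pi(\eta')=0$ is a linear relation among the isotropy weights at the fixed point $q$ of a tall complexity one space. Its nonzero coefficient vector has entries $1$ and $-1$, so it is not of the form $\lambda\xi$ with $\xi\in\Z_{\geq 0}^{h+1}$, as Lemma~\ref{lem:defining-monomial} requires.

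So $\pi$ is injective on $W_v$, and the $T$-weights at $q$ are pairwise distinct and in bijection with $W_v$. Read literally, your set-level conclusion holds without this. But the injectivity is what later arguments rely on, for instance choosing $L_v$ with $\pi\circ L_v=\pi$ and taking ``the unique weight'' with a given projection in Lemma~\ref{lem:isom-Delzant}. You should add this one-line argument.
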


\begin{proof}
    Since $\Tilde{\De}$ is connected and $\Tilde{\Phi}$ induces a homeomorphism from $M/(T \times S^1)$ to $\widetilde{\De}$, $M$ is also connected. For any $p \in M$, $\Tilde{ \De} \cap (\Phi(p) \times \R)$ contains more than one element, so $\Phi^{-1}(\Phi(p))$ contains more than one $T$-orbit and it follows that $p$ is a tall point. By definition, each $v \in \V(\De,f)$ is the moment image of a $(T\times S^1)$-fixed point, so by Theorem~\ref{thm:lnf} $\pi(W_v)$ is the multiset of the $T$-isotropy weights at the fixed point $\Tilde{\Phi}^{-1}(v)$. Since $(M,\omega, \pi \circ \Tilde{\Phi})$ is tall, Lemma~\ref{lem:defining-monomial} implies that no two weights are equal to each other, so $\pi^{-1}(\alpha) \cap W_v$ has at most one element for any $\alpha \in \ft^*$ and thus $\pi(W_v)$ is a set instead of a multiset.
\end{proof}

\begin{lemma}\label{lem:isom-one-skeleton}
     For $i=1,2$, let $\De_i \subset \ft^*$ be a Delzant polytope, and let $f_i: \De_i \to \R$ be a continuous function whose epigraph $\widetilde{\De}_i$ is a Delzant polyhedral set. Let $(M_i,\omega_i,\Tilde{\Phi}_i)$ be a symplectic toric $(T \times S^1)$-manifold such that $\Tilde{\Phi}_i: M_i \to \Tilde{\De}_i$ is a $(T \times S^1)$-quotient map. If the tall complexity one $T$-spaces $(M_1,\omega_1,\pi \circ\Tilde{\Phi}_1)$ and $(M_2,\omega_2,\pi \circ\Tilde{\Phi}_2)$ have isomorphic one-skeletons, then $(\De_1,f_1)$ and $(\De_2,f_2)$ have isomorphic one-skeletons.
\end{lemma}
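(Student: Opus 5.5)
Let $\Gamma_i$ be the one-skeleton of the tall complexity one $T$-space $(M_i,\omega_i,\pi\circ\Tilde\Phi_i)$, and let $\Psi\colon\Gamma_1\to\Gamma_2$ be the given isomorphism. The plan is to build a bijection $B\colon\V(\De_1,f_1)\to\V(\De_2,f_2)$ from $\Psi$ and check items (2) and (3) of Definition~\ref{def:isom-1-skeleton}. Item (1) then follows from Lemma~\ref{lem:preserve-edges}.

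The first step is to identify $\V(\De_i,f_i)$ with the exceptional $T$-fixed points of $M_i$. Let $v\in\V(\De_i,f_i)$. Then $\Tilde\Phi_i^{-1}(v)$ is a single $(T\times S^1)$-fixed point $p$, hence a $T$-fixed orbit. Since $\pi(v)\notin\V(\De_i)$, the point $v$ is a vertex of the graph of $f_i$ that does not lie over a vertex of $\De_i$. I will show that $p$ is exceptional. Its $T$-isotropy weights are $\pi(W_v)$, and these are $h+1=\dim T+1$ vectors in a $\dim T$-dimensional space, related by the defining monomial with exponents $\xi$ as in Lemma~\ref{lem:defining-monomial}. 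By Lemma~\ref{lem:exceptional-orbits}, a fixed point (where $z=0$) is non-exceptional exactly when $\sum\xi_j\le1$, that is, when $\xi$ is a single unit vector $e_k$. In that case $\pi(\eta_k)=0$ and the remaining weights $\pi(W_v\setminus\{\eta_k\})$ form a basis. Then $\Phi_i(M_i)$ is locally a simplicial cone at $\pi(v)$, so $\pi(v)$ is a vertex of $\De_i$ (by Lemma~\ref{lem:local-cone}), which contradicts $v\in\V(\De_i,f_i)$. Hence $p$ is exceptional.

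Conversely, let $p$ be an exceptional $T$-fixed point. By Lemma~\ref{lem:equal-orbits-slice-rep}, $\Tilde\Phi_i(p)$ lies on the bottom or the top of $\Tilde\De_i$, and the $T$-orbit coincides with the $(T\times S^1)$-orbit, which is a point. So $\Tilde\Phi_i(p)$ is a vertex of $\Tilde\De_i$. If $\pi(\Tilde\Phi_i(p))$ were a vertex of $\De_i$, then the local model at $p$ would have image a simplicial cone, and the same computation would show $p$ is non-exceptional. Therefore exceptional $T$-fixed points correspond bijectively to $\V(\De_i,f_i)$.

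Next I define $B(v)$ to be $\Tilde\Phi_2(\Psi(\Tilde\Phi_1^{-1}(v)))$. This is well defined because $\Psi$ preserves stabilizers, so it sends fixed points to fixed points. It is a bijection, since $\Psi^{-1}$ gives the inverse. Item (2) holds because $\Psi$ intertwines the orbital moment maps, so $\pi(B(v))=\pi(v)$. Item (3) holds because $\Psi$ preserves slice representations: at a fixed point the slice representation is the full isotropy representation of $T$, whose weight set is $\pi(W_v)$ by Lemma~\ref{lem:tall-space}. Hence $\pi(W_{B(v)})=\pi(W_v)$, with no multiplicity issues since these are sets.

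The main obstacle is the first step, namely showing that vertices of $\Tilde\De_i$ over non-vertices of $\De_i$ are exactly the exceptional fixed points. The delicate direction is excluding the case $\xi=e_k$ at such a vertex. I will handle it through the local cone description, Lemma~\ref{lem:local-cone} applied to the $T$-moment map. If this direction proves awkward, an alternative is to argue only that these vertices lie on the bottom of $\Tilde\De_i$ and use Lemma~\ref{lem:equal-orbits-slice-rep} together with Lemma~\ref{lem:exceptional-orbits} directly.
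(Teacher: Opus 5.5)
Your proposal is correct and follows the paper's proof. Both arguments identify $\V(\De_i,f_i)$ with the exceptional $T$-fixed points, define $B$ by transporting vertices through the one-skeleton isomorphism, read off items (2) and (3) from the intertwining of moment maps and Lemma~\ref{lem:tall-space}, and get item (1) from Lemma~\ref{lem:preserve-edges}. The only difference is how you justify the identification: the paper cites \cite[Example 1.4]{KT14} and observes that over a vertex of $\De_i$ the entire $T$-moment fiber is $T$-fixed, while you use Lemmas~\ref{lem:defining-monomial} and~\ref{lem:exceptional-orbits} (a non-exceptional fixed point forces $\xi=e_k$). One small repair: Lemma~\ref{lem:local-cone} does not literally apply to the non-proper map $\pi\circ\Tilde\Phi_i$, but convexity of $\Tilde\De_i$ gives $\Tilde\De_i\subseteq v+\sum_{\eta\in W_v}\R_{\ge 0}\eta$, which is enough to conclude that $\pi(v)$ is a vertex of $\De_i$.
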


\begin{proof}
    It is straightforward to verify that a point $p_i \in M_i$ is fixed by $T$ if and only if $\Tilde{\Phi}_i(p_i) \in \V(\De_i,f_i) \sqcup (\V(\De_i) \times \R)$. If $\Tilde{\Phi}_i(p_i) \in \V(\De_i) \times \R$, then $p_i$ is non-exceptional, as every point in the same $T$-moment fiber is also a $T$-fixed point. On the other hand, if $\Tilde{\Phi}_i(p_i) \in \V(\De_i,f_i)$, then $\pi(\Tilde{\Phi}_i(p_i))$ is not a vertex of $\De_i$, so $p_i$ is exceptional (c.f.~\cite[Example 1.4]{KT14}). Hence, $\Tilde{\Phi}_i$ induces a bijection from the set of exceptional fixed points to $\V(\De_i,f_i)$.
    
    Hence, the isomorphism between the one-skeletons of complexity one $T$-spaces induces a bijection $B: \V(\De_1,f_1) \to \V(\De_2,f_2)$. Since the isomorphism intertwine the moment maps, $\pi(B(v)) = \pi(v)$ for all $v \in \V(\De_1,f_1)$. Moreover, by Lemma~\ref{lem:tall-space}, for any $v \in \V(\De_1,f_1)$, the isotropy weights of the $T$-fixed point $\Tilde{\Phi}_i^{-1}(v)$ is given by $\pi(W_v)$. Hence, $\pi(W_v) = \pi(W_{B(v)})$ as the isomorphism also preserves the slice representations. By Lemma~\ref{lem:preserve-edges}, $(\De_1,f_1)$ and $(\De_2,f_2)$ have isomorphic one-skeletons.
\end{proof}

\begin{lemma}\label{lem:isom-Delzant}
    For $i=1,2$, let $\De_i\subset \ft^*$ be Delzant polytopes and let $f_i:\De_i \to \R$ be a continuous function whose epigraph $\widetilde{\De}_i$ is a Delzant polyhedral set. If $(\De_1,f_1),(\De_2,f_2)$ have isomorphic one-skeletons, then there exist $A \in \ell$ and $c \in \R$ such that $\Gamma (\De_1,f_1+\langle A, \cdot \rangle + c) = \Gamma(\De_2,f_2)$.
\end{lemma}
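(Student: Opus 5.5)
The plan is to show that the data of $\Gamma(\De_i,f_i)$ is rigid up to integral shears of $\ft^*\times\R$, and then to build the shear vertex by vertex. Let $B\colon \V(\De_1,f_1)\to\V(\De_2,f_2)$ be the isomorphism. For $A\in\ell$ and $c\in\R$, let $S_{A,c}(\alpha,x)=(\alpha,x+\langle\alpha,A\rangle+c)$ with linear part $S_A(a,k)=(a,k+\langle a,A\rangle)$. Replacing $f_1$ by $f_1+\langle A,\cdot\rangle+c$ replaces $\Tilde\De_1$ by $S_{A,c}(\Tilde\De_1)$ and each weight label $W_v$ by $S_A(W_v)$. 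So it suffices to find a single $A$ with $S_A(W_v)=W_{B(v)}$ for every $v$, and a single $c$ with $S_{A,c}(v)=B(v)$ for every $v$. Edges are then automatically identified by Lemma~\ref{lem:preserve-edges}.

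\emph{Step 1 (local rigidity).} Fix $v\in\V(\De_1,f_1)$. Both $W_v$ and $W_{B(v)}$ are $\Z$-bases of $\ell^*\oplus\Z$ with the same projection $\pi(W_v)=\pi(W_{B(v)})$, and by Lemma~\ref{lem:tall-space} this projection is a set without repetitions. Hence the map sending each $\eta\in W_v$ to the unique $\eta'\in W_{B(v)}$ with $\pi(\eta')=\pi(\eta)$ extends to a lattice automorphism $L$ that commutes with $\pi$. Any such $L$ has the form $(a,k)\mapsto(a,mk+\langle a,A_v\rangle)$ with $m=\pm1$ and $A_v\in\ell$. Since both vertices lie on the bottom of an epigraph, the vertical vector $(0,1)$ lies in the interior of both cones; this forces $m=1$. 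So there is a unique $A_v\in\ell$ with $S_{A_v}(W_v)=W_{B(v)}$.

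\emph{Step 2 (propagation along edges).} Let $e=(v,w)\in\E(\De_1,f_1)$. Then $\eta(e)\in W_v$ and $-\eta(e)\in W_w$. By Lemma~\ref{lem:preserve-edges}, $(B(v),B(w))$ is an edge, and $S_{A_v}$ sends $\eta(e)$ to its direction weight. Take $\eta\in W_v$ and the partner $\eta_w'\in W_w$ given by Lemma~\ref{lem:mod-equal}, so that $\eta_w'\equiv\eta$ modulo $\Z\eta(e)$. Apply $S_{A_v}$, and compare with the partner of $S_{A_v}\eta\in W_{B(v)}$ in $W_{B(w)}$. Because $\pi$ of the two candidates agree and partners are unique, we get $S_{A_v}(W_w)=W_{B(w)}$, hence $A_w=A_v$. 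Thus $A$ is constant on each connected component of the graph.

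\emph{Step 3 (the main obstacle).} The graph $\Gamma(\De_1,f_1)$ may be disconnected, since it omits the vertices of $\Tilde\De_1$ lying over $\V(\De)$. To connect components I would work with the full edge graph of $\Tilde\De_1$, which is connected. At a vertex $u'=(u,f_1(u))$ with $u\in\V(\De_1)$, the weights are $(0,1)$ together with lifts $(a_j,k_j)$ of the edge weights $a_j$ of $\De_1$ at $u$. Here $\pi(\Tilde\De_1)=\De_1=\De_2$, which follows since vertices over $\V(\De)$ and the vertices in $\V$ correspond via $\pi$ and $B$. Because $\{a_j\}$ is a basis of $\ell^*$, any prescribed $k_j'$ are realized by a unique shear, so I would define $A_{u'}$ as the shear making the bottom edges at $u'$ match those of $\Tilde\De_2$ at $(u,f_2(u))$. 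Step 2 applies verbatim to every edge of $\Tilde\De_1$ whose endpoints have already been matched. The remaining issue is consistency around cycles. I would handle it by checking consistency around each $2$-face, which is a polygon, using the argument of Lemma~\ref{lem:mod-equal}; the cycle space of the edge graph of a pointed polyhedron is generated by its $2$-faces. This yields a global $A$.

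\emph{Step 4.} Choose $c$ so that $S_{A,c}(v_0)=B(v_0)$ for one vertex $v_0$. For an edge $(v,w)$, both $w-v$ and $B(w)-B(v)$ are positive multiples of the matched weights, and their projections $\pi(w)-\pi(v)=\pi(B(w))-\pi(B(v))$ agree. Hence $S_{A,c}(w)=B(w)$ whenever $S_{A,c}(v)=B(v)$. Propagating along the connected edge graph of Step 3 then gives $\Gamma(\De_1,f_1+\langle A,\cdot\rangle+c)=\Gamma(\De_2,f_2)$.
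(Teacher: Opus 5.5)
Your Steps 1, 2 and 4 reproduce the paper's proof. The paper fixes one $v$, takes the shear $L_v$ sending $W_v$ to $W_{B(v)}$, and sets $\Tilde B(\Tilde\alpha)=L_v(\Tilde\alpha-v)+B(v)$. It then checks across each edge $(v,w)$ that $\Tilde B(w)=B(w)$ and $L_v(W_w)=W_{B(w)}$, and concludes ``by induction''.

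Two local points need tightening. In Step 1, $(0,1)$ lies on the boundary of the cone when $\pi(v)\in\partial\De$, not in its interior. Argue instead that both cones contain $(0,1)$ and are pointed, so $m=-1$ is impossible. In Step 2, saying that the projections of your two candidates agree is exactly the claim $k=l$. The paper obtains this from Lemma~\ref{lem:defining-monomial}: every linear relation among the projected weights at a vertex of $\V(\De_1,f_1)$ is a multiple of a nonnegative vector $\xi$.

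You are right that the induction only reaches the component of $v$, and that $\Gamma(\De_1,f_1)$ can be disconnected. For $\De=[0,1]^2$ and $f=\max(0,\,x+y-\tfrac32,\,\tfrac12-x-y)$, the epigraph is Delzant and $\Gamma(\De,f)$ consists of two disjoint edges. The paper's proof passes over this silently.

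However, your fix in Step 3 has a genuine gap, for three reasons.

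First, $\De_1=\De_2$ does not follow from the hypotheses. $B$ says nothing about vertices over $\V(\De_i)$. For example, $\De_1=[0,1]$, $\De_2=[0,2]$, $f_i(x)=\max(0,x-\tfrac12)$ have identical one-skeletons. So $(u,f_1(u))$ need not have a partner $(u,f_2(u))$, and your corner shear is then undefined.

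Second, even when $\De_1=\De_2$, that shear is chosen independently of the neighbouring ones, so its agreement with them is exactly what must be proved. Step 2 does not transfer ``verbatim''. It uses Lemma~\ref{lem:preserve-edges}, which only concerns edges in $\E$, and the tallness relation of Lemma~\ref{lem:defining-monomial} at a vertex of $\V(\De_1,f_1)$. At a corner, the projected weights contain $0$, so neither applies.

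Third, the consistency around $2$-faces is only asserted, not proved.

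A route that avoids corners altogether works with linearity domains instead:
\begin{enumerate}
\item By Step 1, $f_2=f_1+\langle A_v,\cdot\rangle+c_v$ near $\pi(v)$.
\item A vertex over $\V(\De_i)$ lies in only one bottom facet, by simplicity. Hence each intersection of two bottom facets of $\Tilde\De_i$ is the convex hull of vertices in $\V(\De_i,f_i)$.
\item Using $B$, the local cones, and the constancy of $(A_v,c_v)$ along $\E$, these crease faces correspond. So $f_1$ and $f_2$ have the same crease locus $K\subset\ft^*$.
\item For a linearity domain $R$ of $f_1$, the set $\mathrm{int}(R)\cap\De_2$ is convex and misses $K$. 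It also contains points near each $\pi(v)$ with $v\in\V(\De_1,f_1)\cap R$. Hence $f_2-f_1$ is affine on it, and $(A_v,c_v)$ is constant on $\V(\De_1,f_1)\cap R$.
\item Adjacent domains share the vertices of their common crease, and the domains are connected through creases. So $(A_v,c_v)$ is globally constant.
\end{enumerate}
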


\begin{proof}
    Let $B: \V(\De_1,f_1) \to \V(\De_2,f_2)$ be the bijection that induces an isomorphism between the one-skeletons. Fix $v \in \V(\De_1,f_1)$. Since both $\widetilde{\De}_1,\widetilde{\De}_2$ are Delzant polyhedral sets, $W_v, W_{B(v)}$ are both bases of $\ell^* \oplus \Z$, so there exists a linear isomorphism $L_v \in \GL(\ell^* \oplus \Z)$ mapping $W_v$ to $W_{B(v)}$. Since $\pi(W_v) = \pi(W_{B(v)})$, we can choose $L_v$ such that $\pi(L_v(\eta)) = \pi(\eta)$ for any $\eta \in \ft^* \times \R$. In other words, we can write $L_v$ in the block form as $L_v = \begin{bmatrix}
        I_d & 0 \\ A &  1
    \end{bmatrix}$ for some $A \in \ell$.
    Define $\Tilde{B}(\Tilde{\alpha}) = L_v(\Tilde{\alpha}-v) +B(v)$, where $\Tilde{\alpha} \in \ft^* \times \R$. Notice that since $\pi(B(v)) = \pi(v)=\pi(L_v(v))$, there exists a constant $c\in \R$ such that for $(\alpha,x) \in \ft^* \times \R$, we have $\Tilde{B}(\alpha,x) = (\alpha, \langle A,\alpha\rangle +x+c)$. It follows that $\Tilde{\De}_1' := \Tilde{B}(\Tilde{\De}_1)$ is the epigraph of the continuous function $f_1+\langle A, \cdot \rangle + c$. Hence, $\Gamma (\De_1,f_1+\langle A, \cdot \rangle + c) = (\Tilde{B}(\V(\De_1,f_1)), \Tilde{B}(\E(\De_1,f_1)), L_v(W(\De_1,f_1))$. It suffices to show that $\Tilde{B}|_{\V(\De_1,f_1)}= B$ and for each $w \in \V(\De_1,f_1)$, $L_v(W_w) = W_{B(w)}$.
    
    For any $w \in \V(\De_1,f_1)$ such that $(v,w) \in \E(\De_1,f_1)$, let $\eta$ denote the weight associated to $(v,w)$. We have that $L_v(\eta) \in W_{B(v)}$. We claim that $L_v(\eta)$ directs the edge $(B(v),B(w))$. By assumption, there exists $\eta' \in W_v$ such that $L_v(\eta')$ directs $(B(v),B(w))$. Hence, $B(w) - B(v) = a L_v(\eta')$ for some $a >0$. Since $\pi(v) = \pi(B(v))$ for all $v$, and since $ \pi(L_v(\eta')) = \pi(\eta')$, applying $\pi$ to both sides of the equation $B(w) - B(v) = a L_v(\eta')$, we have $\pi(w-v) = a\pi(\eta')$. On the other hand, since $\eta$ directs $(v,w)$, there exists $b >0$ such that $ w-v = b \eta$, so $\pi(w-v) = b\pi(\eta)$. Hence, $a \pi(\eta') = b \pi(\eta)$. 
    By Lemma~\ref{lem:tall-space}, $\pi(W_v)$ is the set of isotropy weight at a fixed point of a tall complexity one $T$-space, so by Lemma~\ref{lem:defining-monomial}, $a =b$ and $\eta=\eta'$. Moreover, this implies that $\Tilde{B} (w) = L_v(w-v) + B(v) = aL_v(\eta') + B(v)  = B(w)$.

    Now we show that $L_v(W_w) = W_{B(w)}$. For any $\eta_w \in W_w$, there exists a unique $\eta_v \in W_v$ such that $\eta_w = \eta_v + k\eta$. Hence, $L_v(\eta_w) = L_v(\eta_v) + k L_v(\eta)$. Now let $\eta_{B(w)} \in W_{B(w)}$ be the unique weight such that $\pi(\eta_{B(w)}) = \pi(\eta_w)$. There exists a unique $\eta_{B(v)} \in W_{B(v)}$ such that $\eta_{B(w)} = \eta_{B(v)} + lL_v(\eta)$. 
    Since $\pi(\eta_{B(w)}) = \pi(\eta_w)  = \pi(L_v(\eta_w))$, we have 
    \[\pi(\eta_{B(v)}) + l \pi(L_v(\eta)) =  \pi(L_v(\eta_v)) + k \pi(L_v(\eta)).\]
    Using the same argument as in the last paragraph, we have $\eta_{B(v)} =  L_v(\eta_v)$ and $k=l$. Hence, $L_v(\eta_w) = L_v(\eta_v) + k L_v(\eta) = \eta_{B(v)}+lL_v(\eta) = \eta_{B(w)} \in W_{B(w)}$. This implies that $L_v(W_w) \subseteq W_{B(w)}$. Since $L_v$ is an isomorphism and $W_w,W_{B(w)}$ have the same number of elements, $L_v(W_w) = W_{B(w)}$.

    By induction, $L_v (W_w) = W_{B(w)}$ and $\Tilde{B}(w) = B(w)$ for all $w \in \V(\De_1,f_1)$, and it follows that $(\Tilde{B}(\V(\De_1,f_1)), \Tilde{B}(\E(\De_1,f_1)), L_v(W(\De_1,f_1)) = (\V(\De_2,f_2),\E(\De_2,f_2), W(\De_2,f_2))$, so
    $\Gamma (\De_1,f_1+\langle A, \cdot \rangle + c) = (\Tilde{B}(\V(\De_1,f_1)), \Tilde{B}(\E(\De_1,f_1)), L_v(W(\De_1,f_1)) = \Gamma(\De_2,f_2).$
\end{proof}

Next, we define the skeleton of a pair $(\De,f)$ and prove that it is isomorphic to the skeleton of the corresponding tall complexity one $T$-space. Recall that given  a subset $P$ of $\ft^* \times \R$, the bottom of $P$ is defined to be $\del_-P: = \{(\alpha,x)\in P\mid \text{if }(\alpha,x')\in P\text{ then }x\leq x'\}.$

Let $\De \subset \ft^*$ be a Delzant polytope. Let $f: \De \to \R$ be a continuous function whose epigraph $\widetilde{\De}$ is a Delzant polyhedral set. Let $(M,\omega, \Tilde{\Phi})$ be a symplectic toric $(T \times S^1)$-manifold such that $\Tilde{\Phi}: M \to \Tilde{\De}$ is a $(T \times S^1)$-quotient map. By the last claim of Lemma~\ref{lem:equal-orbits-slice-rep}, the $\Tilde{\Phi}$-moment image of the skeleton of the complexity one $T$-space $(M,\omega, \pi \circ\Tilde{\Phi})$ is a subset of $\del_-\Tilde{\De}$. For this reason, our definition of the skeleton of a pair $(\De,f)$ will only involve the bottom of $\Tilde{\De}$. 

Let $\Gamma(\De,f)$ be the one-skeleton of $(\De,f)$. Let $P = \textrm{Conv}(\V(\De,f))$. Notice that $\del_-P = P \cap \del_-\Tilde{\De}$. To each face $F$ of $P$ such that $F \subseteq \del_-P$ and each vertex $v \in \V(\De,f) \cap F$, we associate a set $W_v^F:=\{\eta \in W_v: \eta \in \R(F-v)\}$. By Lemma~\ref{lem:mod-equal}, for any vertices $v,w \in \V(\De,f) \cap F$, the sets $W_v^F,W_w^F$ span the same lattice. Hence, to each face $F \subset \del_-P$, we can associate a well-defined lattice $L_F:= \Z(W_v^F) \subset \ell^* \oplus \Z$.

\begin{definition}\label{def:de-skeleton}
    Let $\De \subset \ft^*$ be a Delzant polytope. Let $f: \De \to \R$ be a continuous function  whose epigraph $\widetilde{\De}$ is a Delzant polyhedral set. The skeleton $\Sigma(\De,f)$ of the pair $(\De,f)$ is the union of the relative interior\footnote{Recall that given a face $F \subset P$, the {\bf relative interior} of $F$, denoted by $\relint(F)$, is the interior of $F$ inside the affine span of $F$ under the subspace topology.} of faces $F$ of $P = \textrm{Conv}(\V(\De,f))$ such that $F \subset \del_-P$ and one of the following holds:
\begin{enumerate}
    \item $\pi(L_F)$ is not a primitive sublattice of $\ell^*$, or
    \item $\dim \De - \dim F \geq 1$ and $\pi(W_v \setminus W_v^F) \cap \pi(L_F) =\emptyset$ for any $v\in F \cap \V(\De,f)$.
\end{enumerate}
\end{definition}

Before introducing the labels, we first recall that for each $\eta \in \ell^* \oplus \Z$, we can think of $\pi(\eta) \in \ell^*$ as a homomorphism $T \to S^1$, so $\ker(\pi(\eta))$ is a subgroup of $T$. Each $\relint(F)$ in the skeleton is labeled by the following information:
\begin{enumerate}
    \item the stabilizer group $T_F=\cap_{\eta \in W_v^F} \ker(\pi(\eta))$, and
    \item the slice representation: the weights in $\pi(W_v \setminus W^F_v)$ induce a representation of $T$ on $\C^l$, where $l$ is the number of weights in $\pi(W_v \setminus W^F_v)$.
\end{enumerate}

The next lemma implies that every point in $\Sigma(\De,f)$ is the moment image of an exceptional orbit.

\begin{lemma}\label{lem:nonexceptional}
    Fix $p \in \Tilde{\Phi}^{-1}(\del_-P)$ and let $F$ be the unique face such that $\Tilde{\Phi}(p) \in \relint(F)$. $p$ is non-exceptional if and only if $\pi(L_F)$ is a primitive sublattice of $\ell^*$ and one of the following holds
    \begin{enumerate}
        \item $\dim \De = \dim F$, or 
        \item $\pi(W_v \setminus W_v^F) \cap \pi(L_F) \neq \emptyset$ for any $v \in F \cap \V(\De,f)$.
    \end{enumerate}
\end{lemma}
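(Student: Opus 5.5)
Let $q:=\Tilde{\Phi}(p)\in\relint(F)$ with $F\subset\del_-P$, and fix a vertex $v\in F\cap\V(\De,f)$. The plan is to reduce to a computation in the local model at $p$ and then apply Lemma~\ref{lem:exceptional-orbits}.

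\emph{Step 1: the local model.} Since $\widetilde{\De}$ is Delzant, near $v$ it coincides with the cone $v+\sum_{\eta\in W_v}\R_{\geq 0}\eta$. Near $q$, $\widetilde\De$ is therefore the cone $q+\R(F-v)+\sum_{\eta\in W_v\setminus W_v^F}\R_{\ge0}\eta$. Here I am assuming that $F$ is also a face of $\widetilde\De$ and that $\R(F-v)=\R W^F_v$. Consequently the $(T\times S^1)$-stabilizer of $p$ is the subtorus determined by the lattice $L_F$, and its toric slice weights are the images of $W_v\setminus W_v^F$ modulo $L_F$. Restricting to $T$ gives the following.
\begin{itemize}
\item The $T$-stabilizer is $H=T_F=\bigcap_{\eta\in W_v^F}\ker\pi(\eta)$.
\item By Lemma~\ref{lem:equal-orbits-slice-rep}(1), $T\cdot p=(T\times S^1)\cdot p$, so $\dim F=\dim(T/H)$ and $h=\dim H=\dim\De-\dim F$.
\item The $T$-slice is $\C^{h+1}$ with weights $\pi(W_v\setminus W_v^F)$ restricted to $\fh$.
\end{itemize}
These $h+1$ weights are $\pi(\eta_0),\dots,\pi(\eta_h)$ with $\eta_i\in W_v\setminus W_v^F$.

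\emph{Step 2: the defining monomial.} By Lemma~\ref{lem:defining-monomial}, $\xi$ is determined by the kernel of $\rho\colon H\to(S^1)^{h+1}$. Since $W_v$ is a $\Z$-basis of $\ell^*\oplus\Z$, the characters of $T\times S^1$ restricted to the subtorus $T\times S^1$ modulo the stabilizer give the exact sequence in \eqref{ses}. In this way the exponent $\xi_i$ should be read off as follows: write the basis vector $(0,1)\in\ell^*\oplus\Z$ as an integer combination of $W_v$, modulo $L_F$, and take the coefficient of $\eta_i$. This matches $\Tilde P(\Theta(t,\lambda))=\lambda$ in Lemma~\ref{lem:slice-rep}. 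The point $p$ corresponds to $z=0$, so by Lemma~\ref{lem:exceptional-orbits} it is exceptional if and only if $\sum_i\xi_i>1$. Since $\xi\neq0$, this means $p$ is non-exceptional exactly when $\xi$ is a standard basis vector $e_j$.

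\emph{Step 3: translate into the combinatorial conditions.} There are three things to check.
\begin{itemize}
\item $\xi=e_j$ means $(0,1)\equiv\eta_j \bmod L_F$. Equivalently, $\pi(\eta_j)\in\pi(L_F)$ and the last coordinate works out. This is how condition (2) arises; the same holds for every $v$ on $F$, by Lemma~\ref{lem:mod-equal}.
\item Primitivity of $\pi(L_F)$ should be equivalent to $H$ being connected, hence to the slice representation being faithful with connected kernel. A non-primitive $\pi(L_F)$ gives a finite stabilizer component and forces $\sum\xi_i\ge2$, since some $\xi_i$ must be divisible by the index.
\item When $\dim F=\dim\De$ we have $h=0$, a single weight, and $\xi=(1)$ precisely when $H$ is trivial, i.e.\ when $\pi(L_F)$ is primitive. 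This gives case (1).
\end{itemize}

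The main obstacle is the lattice bookkeeping in Steps 2 and 3: identifying $\xi$ concretely in terms of $W_v$ and $L_F$. I would first treat the case where $F$ is a vertex ($H=T$), where $\xi$ is just the coordinate vector of $(0,1)$ in the basis $W_v$. I would then reduce the general case to it by quotienting by the subtorus $\exp(\R L_F)$, i.e.\ by passing to the face's normal cone.
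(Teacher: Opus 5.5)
Your proposal is correct, but it argues differently from the paper.

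\textbf{The paper's route.} It never computes the defining monomial. After identifying $H=\bigcap_{\eta\in W_v^F}\ker\pi(\eta)$, it works directly from the definition of an exceptional orbit. It compares $H$ with the $T$-stabilizer of points over $\Tilde\Phi(p)+(0,\epsilon)$, which lie in the relative interior of the next face $F'$.
\begin{itemize}
\item For ``if'', it uses the one-dimensional relation space of $\pi(W_v)$ (Lemmas~\ref{lem:tall-space} and~\ref{lem:defining-monomial}) to get $(0,1)\in\Z(W_v^{F'})$, hence equal stabilizers.
\item For ``only if'', it splits into an interior case, citing \cite[Example 1.4]{KT14}, and a boundary case. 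It then proves primitivity of $\pi(L_F)$ by a separate lattice computation.
\end{itemize}

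\textbf{Your route.} You identify the local model itself. The coefficients of $(0,1)$ on $W_v\setminus W_v^F$ in the basis $W_v$ are the exponents $\xi_i$. They are $\geq 0$ because $(0,1)$ points into $\Tilde\De$ at a bottom point, which is what Lemma~\ref{lem:slice-rep} encodes; without this, the kernel would determine $\xi$ only up to sign. Lemma~\ref{lem:exceptional-orbits} at $z=0$ then reduces everything to whether $\xi=e_j$.

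This is more uniform, since there is no interior/boundary split and no appeal to \cite{KT14}. It is also sharper. Your phrase ``the last coordinate works out'' is in fact automatic: if $\pi(\eta_j)\in\pi(L_F)$, then $\eta_j\equiv m(0,1)\bmod L_F$ for some $m\in\Z$. Substituting $(0,1)\equiv\sum_i\xi_i\eta_i$ gives $m\xi=e_j$, hence $\xi=e_j$. Consequences:
\begin{itemize}
\item $p$ is non-exceptional if and only if (2) holds.
\item Primitivity is then a consequence, since $H\cong\ker P$ is connected if and only if $\gcd(\xi_i)=1$.
\item Condition (1) together with primitivity forces $\pi(L_F)=\ell^*$, hence (2).
\end{itemize}
The cost is that you rely on the full equivariant identification of Lemma~\ref{lem:slice-rep}, rather than on stabilizer comparisons alone.

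\textbf{Two slips to fix when writing it up.}
\begin{itemize}
\item $\xi$ is determined by the image $\rho(H)=\ker P$, not by the kernel of $\rho$.
\item In your last paragraph, you should restrict to the subtorus $\widetilde H\subset T\times S^1$ annihilated by $L_F$, whose character lattice is $(\ell^*\oplus\Z)/L_F$. Quotienting by ``$\exp(\R L_F)$'' conflates $\ft$ and $\ft^*$.
\end{itemize}
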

\begin{proof}
    Let $\widetilde{H} \subset T \times S^1$ be the stabilizer group of $p$ under the $(T \times S^1)$-action and let $H:=\widetilde{H} \cap (T \times \{e\})$. By Lemma~\ref{lem:local-cone} and Theorem~\ref{thm:lnf}, $\widetilde{\mathfrak{h}}^\circ =  \R(F-\Tilde{\Phi}(p))$, here $\R(F-\Tilde{\Phi}(p))$ is the linear subspace of $\ft^* \times \R$ parallel to $F$. It is straightforward to check that $H \cong \cap_{\eta \in W_v^F} \ker(\pi(\eta))$ for any $v \in F \cap \V(\De,f)$.

    First, assume that $\pi(L_F)$ is a primitive sublattice of $\ell^*$ and (1) holds. Since $\pi$ is injective when restricted to $\del_-P$, $\dim \pi(L_F) = \dim F = \dim \De$, so $\pi(L_F) = \ell^*$, so $\{\pi(\eta): \eta \in W_v^F\}$ form a basis of $\ell^*$. It follows that $H$ is the trivial group, so $p$ is non-exceptional.

    Now assume that $\pi(L_F)$ is a primitive sublattice of $\ell^*$ and (2) holds, then there exists an isotropy weight $\eta' \in W_v \setminus W_v^F$ such that $\pi(\eta') = \sum_{\eta_i \in W_v^F} a_i \pi(\eta_i)$ for some $a_i \in \Z$. Let $F'$ be the smallest face of $\Tilde{\De}$ such that $F \subsetneq F'$ and $\eta' \in \R(F'-\Tilde{\Phi}(p))$. Since $W_v$ is a $\Z$-basis of $\ell^* \oplus \Z$, $(0,1) = \sum_{\eta_i \in W_v} b_i \eta_i$ for some $b_i \in \Z$, so $ \sum_{\eta_i \in W_v} b_i \pi(\eta_i) = 0$. By Lemma~\ref{lem:tall-space}, $\pi(W_v)$ is the isotropy weights of a fixed point of a tall complexity one $T$-space. Since we also have $\pi(\eta') -\sum_{\eta_i \in W_v^F} a_i \pi(\eta_i)=0$, by Lemma~\ref{lem:defining-monomial}, $b_i=0$ for all $\eta_i \notin W_v^{F'}$. This implies that $(0,1) \in \Z(W_v^{F'})$. For $\epsilon>0$ sufficiently small, $\Tilde{\Phi}(p) +(0,\epsilon) \in \relint(F')$. By the argument in the first paragraph, any point $q$ such that $\Tilde{\Phi}(q) = \Tilde{\Phi}(p) +(0,\epsilon)$ has $T$-stabilizer group $H' \cong \cap_{\eta \in W_v^{F'}} \ker(\pi(\eta))= \cap_{\eta \in W_v^F} \ker(\pi(\eta)) \cap \ker(\pi(\eta')) =  \cap_{\eta \in W_v^F} \ker(\pi(\eta)) \cong H$, where the second equality follows from the fact that $\pi(\eta) \in \pi(L_F) = \Z(\pi(W_v^F))$, so $p$ is non-exceptional.

    Conversely, assume that $p$ is non-exceptional. If $\pi(\Tilde{\Phi}(p)) \in \relint(\De)$, then by~\cite[Example 1.4]{KT14} $H$ is the trivial group. This implies that $\pi(L_F) = \ell^*$ and it follows that $\dim F = \dim (\pi(L_F)) = \dim \ft^* = \dim \De$. If $\pi(\Tilde{\Phi}(p)) \in \partial \De$, then there exists a face $F'$ of $\Tilde{\De}$ such that $F \subsetneq F'$ and $(0,1) \in \R(F'-\Tilde{\Phi}(p))$. Since $p$ is non-exceptional, for sufficiently small $\epsilon>0$, any point $q$ such that $\Tilde{\Phi}(q) = \Tilde{\Phi}(p)+(0,\epsilon)$ has the same $T$-stabilizer as $p$. It follows that $\cap_{\eta \in W_v^F} \ker(\pi(\eta)) = \cap_{\eta \in W_v^{F'}} \ker(\pi(\eta))$. Therefore, $\pi(W_v^{F'} \setminus W_v^F) \subset \pi(L_F)$, so (2) holds. Since $W_v$ is a $\Z$-basis of $\ell^*\oplus \Z$ and $\pi(W_v^{F'}) \subset \Z(\pi(W_v^F))$, there exists a unique $\eta' \in W_v^{F'} \setminus W_v^F$. Since $(0,1) \in \R(F'-\Tilde{\Phi}(p))$, $(0,1) = a\eta'+\sum_{\eta_i \in W_v^F} a_i \eta_i$ for some $a,a_i \in \Z$. This implies that $a\pi(\eta')+\sum_{\eta_i \in W_v^F} a_i \pi(\eta_i)=0$. Since $\pi(\eta') \in \pi(L_F)$, $a = \pm 1$, so after possibly replacing $a_i$ by $-a_i$, we can write $\pi(\eta') = \sum_{\eta_i \in W_v^F} a_i \pi(\eta_i)$. Next, we prove that $\pi(L_F)$ is primitive. Fix $\alpha \in \pi(L_F)$ and a nonzero integer $m$ such that $\alpha =m\beta$ for some $\beta \in \ell^*$. Then, there exist $b_i,c_i \in \Z$ such that $(\beta,0) = \sum_{\eta_i \in W_v} b_i \eta_i$ and $\alpha = \sum_{\eta_i \in W_v^F} c_i\pi(\eta_i)$. So, $m\sum_{\eta_i \in W_v} b_i \pi(\eta_i) = \sum_{\eta_i \in W_v^F} c_i\pi(\eta_i)$. By Lemma~\ref{lem:tall-space} and Lemma~\ref{lem:defining-monomial}, since $\pi(\eta') = \sum_{\eta_i \in W_v^F} a_i \pi(\eta_i)$, the previous equation implies that $b_i=0$ for all $\eta_i \notin W_v^{F'}$ and $c_i = m(b_i+b'a_i)$ for all $\eta_i \notin W_v^{F}$, so $\beta = \frac{\alpha}{m} = \sum_{\eta_i \in W_v^F} (b_i+b'a_i) \pi(\eta_i) \in \pi(L_F)$. Thus, $\pi(L_F)$ is a primitive sublattice of $\ell^*$.
\end{proof}

\begin{proposition}\label{prop:skeleton}
     Let $\De \subset \ft^*$ be a Delzant polytope. Let $f: \De \to \R$ be a continuous function whose epigraph is a Delzant polyhedral set $\widetilde{\De}:= \{(\alpha,x) \in \De \times \R: x \geq f(\alpha)\}$. Let $(M,\omega, \Tilde{\Phi})$ be a symplectic toric $(T \times S^1)$-manifold such that $\Tilde{\Phi}: M \to \Tilde{\De}$ is a $(T \times S^1)$-quotient map. Then the skeleton $\Sigma$ of the complexity one $T$-space  $(M,\omega, \pi \circ\Tilde{\Phi})$ is isomorphic to $\Sigma(\De,f+\langle A,\cdot\rangle+c)$ for any $A \in \ell, c \in \R$.
\end{proposition}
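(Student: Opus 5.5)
Two reductions come first, and then the skeleton is matched face by face. The first is to remove $A$ and $c$. The affine map $(\alpha,x)\mapsto(\alpha,x+\langle\alpha,A\rangle+c)$ is an integral automorphism of $\ft^*\times\R$ that commutes with $\pi$. It carries $\Tilde\De$ onto the epigraph of $f+\langle A,\cdot\rangle+c$, sends vertices to vertices, and sends faces of $P$ to the corresponding faces. Its linear part $\begin{bmatrix} I&0\\ A&1\end{bmatrix}$ maps each $W_v$ to $W_{v'}$, so it preserves $\pi(W_v)$, $\pi(W_v^F)$ and $\pi(L_F)$. Conditions (1) and (2) of Definition~\ref{def:de-skeleton} and the labels are therefore unchanged, and $\pi$ identifies $\Sigma(\De,f)$ with $\Sigma(\De,f+\langle A,\cdot\rangle+c)$ compatibly with the moment maps. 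Composing $\Tilde\Phi$ with this map (as in the proof of Theorem~\ref{thm:inj-toric-alt}) does not change the $T$-space. So it suffices to take $A=0$ and $c=0$.

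The second reduction is to locate the skeleton. By the last claim of Lemma~\ref{lem:equal-orbits-slice-rep}, every exceptional orbit maps under $\Tilde\Phi$ into $\del_-\Tilde\De$. I also need it to land in $\del_-P$. Take an exceptional point $p$ with $T$-stabilizer $H$. By Corollary~\ref{cor:exceptional-component}, the component $N$ of $M^H$ through $p$ is a compact toric $(T/H)$-manifold consisting of exceptional points. Its image is therefore a polytope in $\del_-\Tilde\De$ whose vertices are images of exceptional $T$-fixed points. By the proof of Lemma~\ref{lem:isom-one-skeleton}, those vertices lie in $\V(\De,f)$, so $\Tilde\Phi(p)\in \mathrm{Conv}(\V(\De,f))\cap\del_-\Tilde\De=\del_-P$. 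By part (1) of Lemma~\ref{lem:equal-orbits-slice-rep}, $\Tilde\Phi$ is injective on $T$-orbits over $\del_-\Tilde\De$. Since $\pi$ is injective on $\del_-\Tilde\De$, the map $\pi\circ\Tilde\Phi$ restricted to $\Sigma$ is a homeomorphism onto its image, and that image lies in $\pi(\del_-P)$.

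Next I identify the image with the union of faces in Definition~\ref{def:de-skeleton}. For $p\in\Tilde\Phi^{-1}(\del_-P)$ with $\Tilde\Phi(p)\in\relint F$, Lemma~\ref{lem:nonexceptional} says $p$ is exceptional exactly when $\pi(L_F)$ is non-primitive, or when $\dim F<\dim\De$ and condition (2) fails. The one mismatch with Definition~\ref{def:de-skeleton} is the quantifier in condition (2): "for any $v$" must be independent of $v$. I expect this to follow by comparing $W_v$ and $W_w$ along edges of $F$ using Lemma~\ref{lem:mod-equal}. Weights outside $W^F$ correspond to weights outside $W^F$, and they differ by elements of $L_F$, so the condition $\pi(W_v\setminus W_v^F)\cap\pi(L_F)\neq\emptyset$ holds for one vertex $v$ of $F$ iff it holds for all. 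Tallness is automatic by Lemma~\ref{lem:tall-space}. Hence $\pi\circ\Tilde\Phi(\Sigma)=\pi(\Sigma(\De,f))$, and the required homeomorphism is $(\pi|_{\del_-\Tilde\De})^{-1}\circ\pi\circ\Tilde\Phi$, which intertwines the moment maps.

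Finally, the labels must match. The first paragraph of the proof of Lemma~\ref{lem:nonexceptional} already gives the stabilizer: $H\cong\bigcap_{\eta\in W_v^F}\ker\pi(\eta)=T_F$. For the slice representation I will use the local normal form, Theorem~\ref{thm:lnf}, for the toric $(T\times S^1)$-action at $p$. The $(T\times S^1)$ symplectic slice is $\C^{l}$, with weights given by $W_v\setminus W_v^F$ restricted to $\Tilde H$, where $v$ is a vertex of the face $F$ of $\Tilde\De$. Restricting to $T$, the $T$-orbit equals the $(T\times S^1)$-orbit by Lemma~\ref{lem:equal-orbits-slice-rep}(1), and $H=\Tilde H\cap T$ acts via $\pi(W_v\setminus W_v^F)$. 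This restriction is the step I expect to need the most care. The $T$-symplectic slice must coincide with the toric slice, which follows from the equality of orbits and equal dimension counts. One must also check that the $(T\times S^1)$-stabilizer does not enlarge $H$ in a way that changes the weights mod $\mathfrak h^\circ$.
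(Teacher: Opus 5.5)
Your proposal is correct and follows the paper's proof in substance. Exceptional orbits land in $\del_-\Tilde\De$. The component $N$ of $M^H$ through an exceptional point consists of exceptional points whose fixed points map into $\V(\De,f)$, so $\Tilde\Phi(p)\in\del_-P$. Lemma~\ref{lem:nonexceptional} then identifies the image with $\Sigma(\De,f)$, and the homeomorphism comes from the coincidence of $T$- and $(T\times S^1)$-orbits over the bottom.

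The differences are minor:
\begin{itemize}
\item You remove $A,c$ by checking directly that the shear $(\alpha,x)\mapsto(\alpha,x+\langle\alpha,A\rangle+c)$ preserves all the data in Definition~\ref{def:de-skeleton}. The paper instead twists the $(T\times S^1)$-action by the dual automorphism $\Lambda$ and reapplies the first part.
\item You make explicit the $v$-independence in condition (2) and the label check, which the paper leaves implicit.
\item Your worry that $\Tilde H$ might enlarge $H$ is harmless. The $T$-symplectic slice at $p$ equals the $(T\times S^1)$-slice, and the $H$-representation on it is just the restriction.
\end{itemize}
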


\begin{proof}
    We first prove that $\Tilde{\Phi}$ induces an isomorphism from the skeleton of $(M,\omega, \pi \circ\Tilde{\Phi})$ to $\Sigma(\De,f)$. Let $p$ be an exceptional point in $M$. Let $\widetilde{H} \subset T \times S^1$ be the stabilizer group of $p$ under the $(T \times S^1)$-action and let $H:=\widetilde{H} \cap (T \times \{e\})$. Let $F$ be the minimal face of $\widetilde{\De}$ such that $\Tilde{\Phi}(p) \in \relint(F)$. By Lemma~\ref{lem:local-cone} and Theorem~\ref{thm:lnf}, $\widetilde{\mathfrak{h}}^\circ =  \R(F-\Tilde{\Phi}(p))$, here $\R(F-\Tilde{\Phi}(p))$ is the linear subspace of $\ft^* \times \R$ parallel to $F$.  
    
    By Lemma~\ref{lem:equal-orbits-slice-rep}, $\Tilde{\Phi}(p) \in \del_-\Tilde{\De}$ and the $T$-orbit through $p$ is the same as the $(T \times S^1)$-orbit through $p$, i.e. $(T \times S^1)/\widetilde{H} \cong T/H$. Therefore, $N:=\Tilde{\Phi}^{-1}(F)$ is a symplectic toric $(T/H)$-manifold and $\frac{1}{2}\dim N = \dim F = \dim (T/H)$ (c.f.~\cite[Section 2]{De88}). By construction, $N$ is the connected component of $M^H$ that contains $p$.  Hence, by Lemma~\ref{lem:toric-exceptional}, every point in $N$ is exceptional. In particular, the fixed points in $N$ are all exceptional and $F = \textrm{Conv}(\Tilde{\Phi}(N^T))$. By the proof of Lemma~\ref{lem:isom-one-skeleton}, $\Tilde{\Phi}(N^T) \subseteq \V(\De,f)$. It follows that $\Tilde{\Phi}(p) \in P:=\textrm{Conv}(\V(\De,f))$ and thus $\Tilde{\Phi}(p) \in \del_-P$. Hence, $\Tilde{\Phi}(\Sigma) \subset \del_-P$. By Lemma~\ref{lem:nonexceptional}, $\Tilde{\Phi}(\Sigma) = \Sigma(\De,f)$. Moreover, since for each exceptional point, its $T$-orbit is the same as the $(T\times S^1)$-orbit,  the natural map $M/T \to M/(T \times S^1)$ identifies $\Sigma$ in two quotient spaces. Since $\Tilde{\Phi}$ induces a homeomorphism from $M/(T \times S^1)$ to $\widetilde{\De}$, $\Tilde{\Phi}$ induces a homeomorphism from $\Sigma$ to $\Sigma(\De,f)$. It is straightforward to verify that the stabilizer group label and the slice representation label give the correct $T$-stabilizer group and the slice representation for the corresponding orbit, so $\Tilde{\Phi}$ induces an isomorphism from $\Sigma$ to $\Sigma(\De,f)$.

    Fix $A \in \ell$ and $c \in \R$. Define $\widetilde{\De}(A,c):= \{(\alpha,x) \in \De \times \R: x \geq f(\alpha) + \langle A,\alpha \rangle +c \}$ and let $(M_{(A,c)},\omega_{(A,c)},\Tilde{\Phi}_{(A,c)})$ be a symplectic toric $(T \times S^1)$-manifold  such that $\Tilde{\Phi}_{(A,c)}$ induces a homeomorphism from $M_{(A,c)}/(T\times S^1)$ to $\Tilde{\De}(A,c)$.

    Let $\rho: T \times S^1 \to \textrm{Sympl}(M,\omega)$ be the toric $(T \times S^1)$-action on $(M,\omega)$. Consider the linear isomorphism $L_A:=  \begin{bmatrix}
        I_d & 0 \\ A &  1
    \end{bmatrix}$ on $\ft^* \times \R$. Let $\Lambda \in \GL( \ft \times \R)$ be the dual linear isomorphism and identify $\Lambda$ with the automorphism it induces on $T \times S^1$. Define $\Tilde{\Psi}: M \to \ft^* \times \R$ by $\Tilde{\Psi}(p) = L_A(\Tilde{\Phi}(p)) + (0,c)$. It is straightforward to check that $\Tilde{\Psi}$ is a moment map for the Hamiltonian $(T \times S^1)$-action given by $\rho \circ \Lambda$ and that $\Tilde{\Psi}(M) = \widetilde{\De}(A,c)$. Hence, by Theorem~\ref{thm:KL}, since $\Tilde{\De}(A,c)$ is convex, $(M_{(A,c)},\omega_{(A,c)},\Tilde{\Phi}_{(A,c)})$ is isomorphic to $(M,\omega, \Tilde{\Psi})$. Since $\Lambda|_{T \times \{e\}}$ is the identity map, the identity map on $M$ is a $T$-equivariant symplectomorphism that intertwines $\pi \circ \Tilde{\Psi}$ and $\pi \circ \Tilde{\Phi}$. Hence, there is a $T$-equivariant symplectomorphism between $(M_{(A,c)},\omega_{(A,c)},\pi \circ\Tilde{\Phi}_{(A,c)})$ and $(M,\omega, \pi\circ\Tilde{\Phi})$. The claim follows.
\end{proof}

Now, we are ready to prove Theorem~\ref{thm:1-skeleton}.

\begin{proof}[Proof of Theorem~\ref{thm:1-skeleton}]
     It suffices to prove the theorem by assuming that both spaces have $1$-colorable skeletons. For $i=1,2$, let $\Sigma_i \subset M_i/T$ be the skeleton and let $\De_i: =\Phi_i(M_i)$ be the moment polytope. By Theorem~\ref{thm:inj-toric-alt}, there exists a continuous function $f_i: \De_i \to \R$ such that 
\begin{enumerate}
\item $\Tilde{\De}_i:= \{(\alpha,x) \in \De_i \times \R: x \geq f_i(\alpha)\}$ is a Delzant polyhedral set.
\item There exists a symplectic toric $(T \times S^1)$-manifold  $(\Tilde{M}_i, \Tilde{\omega}_i, \Tilde{\Phi}_i)$ such that $\Tilde{\Phi}_i: \Tilde{M}_i \to \Tilde{\De}_i$ is a $(T \times S^1)$-quotient map, and the skeleton of the tall complexity one $T$-space $(\Tilde{M}_i, \Tilde{\omega}_i, \pi \circ \Tilde{\Phi}_i)$ is isomorphic to $\Sigma_i$.
\end{enumerate}

By assumption, $(M_1,\omega_1,\Phi_1)$ and $(M_2,\omega_2,\Phi_2)$ have isomorphic one-skeletons, so $(\Tilde{M}_1, \Tilde{\omega}_1, \pi \circ \Tilde{\Phi}_1)$ and $(\Tilde{M}_2, \Tilde{\omega}_2, \pi \circ \Tilde{\Phi}_2)$ have isomorphic one-skeletons. By Lemma~\ref{lem:isom-one-skeleton}, $(\De_1,f_1)$ and $(\De_2,f_2)$ have isomorphic one-skeletons. By Lemma~\ref{lem:isom-Delzant}, there exist $A \in \ell$ and $c \in \R$ such that $\Gamma (\De_1,f_1+\langle A, \cdot \rangle + c) = \Gamma(\De_2,f_2)$. By definition, the skeleton of $(\De_1,f_1+\langle A, \cdot \rangle + c)$ is equal to the skeleton of $(\De_2,f_2)$. By Proposition~\ref{prop:skeleton}, the skeleton of the complexity one $T$-space $(\Tilde{M}_1, \Tilde{\omega}_1, \pi \circ \Tilde{\Phi}_1)$ is isomorphic to the skeleton of $(\De_1,f_1+\langle A, \cdot \rangle + c)$ and the skeleton of $(\Tilde{M}_2, \Tilde{\omega}_2, \pi \circ \Tilde{\Phi}_2)$ is isomorphic to the skeleton of $(\De_2,f_2)$. Hence, $(M_1,\omega_1,\Phi_1)$ and $(M_2,\omega_2,\Phi_2)$ have isomorphic skeletons.
\end{proof}

\bibliographystyle{alpha}
\bibliography{ref}

\end{document}